\documentclass[11pt]{amsart}
\usepackage[utf8]{inputenc}
\usepackage{amssymb,latexsym, parskip}

\pdfoutput = 1

\usepackage{amsmath}
\usepackage{graphicx}
\usepackage{textcomp}

\usepackage{amsthm,amssymb,enumerate,graphicx, tikz}
\usetikzlibrary{decorations.markings}
\usetikzlibrary{decorations.pathmorphing}
\usetikzlibrary{patterns, patterns.meta}
\usepackage{amscd}
\usepackage{setspace}
\usepackage{mathrsfs}
\usepackage{comment}
\usepackage{hyperref}
\usepackage[capitalise]{cleveref}
\usepackage{subcaption}
\usepackage{enumerate}
\usepackage{standalone}

\usepackage[left=1in,right=1in,top=1in,bottom=1in,bindingoffset=0cm]{geometry}

% % Use symbols for footnotes
% \renewcommand{\thefootnote}{\fnsymbol{footnote}}

%Hyperref settings
\hypersetup{
    colorlinks=true,
    linkcolor=red,
    citecolor=green,
    filecolor=magenta,
    urlcolor=blue,
    pdfborder={0 0 0}, % No border around links
}

%%%%%%%%%%%%%%%%%%%%%%%%%%%%%%%%%%%%%%%%%%%%%%%%%%%%%%%%%%%%% FORMAT
%% THEOREMS ETC.
\newtheorem{theorem}{Theorem}[section]

\newtheorem*{theorem*}{Theorem}
\newtheorem{proposition}[theorem]{Proposition}
\newtheorem{lemma}[theorem]{Lemma}
\newtheorem{observation}[theorem]{Observation}

\theoremstyle{definition}
\newtheorem{definition}[theorem]{Definition}

\theoremstyle{remark}
\newtheorem{remark}[theorem]{Remark}

\newtheorem{claim}[theorem]{Claim}
\newenvironment{proofc}{\begin{proof}[Proof of Claim]}{\end{proof}}

\newcommand{\G}{\mathcal{G}}

\renewcommand{\l}{\ell}

\newcommand{\C}{\mathcal{C}}

\newcommand{\T}{\mathcal{T}}

\definecolor{green}{RGB}{50,205,50}
\definecolor{yellow}{RGB}{240,240,10}

\newcommand{\Gthreeand}{\G_{3'}}
\newcommand{\Gthreeor}{\G_{3''}}
\newcommand{\Tthreeand}{T_{3'}}
\newcommand{\Tthreeor}{T_{3''}}

\title{Graphs generated from minimal sets of finite point-set topologies}

\author{Ketai Chen}\thanks{K. Chen: Department of Mathematics and Statistics, Auburn University.  \url{kzc0036@auburn.edu}}
\author{Jared DeLeo}\thanks{J. DeLeo: Department of Mathematics and Statistics, Auburn University.  \url{jmd0150@auburn.edu}}
\author{Owen Henderschedt}\thanks{O. Henderschedt: Department of Mathematics and Statistics, Auburn University.  \url{olh0011@auburn.edu}}

\begin{document}
\begin{abstract}
In 1985, Golumbic and Scheinerman established an equivalence between comparability graphs and containment graphs—graphs whose vertices represent sets, with edges indicating set containment. A few years earlier, McMorris and Zaslavsky characterized upper bound graphs—those derived from partially ordered sets where two elements share an edge if they have a common upper bound—by a specific edge clique cover condition. In this paper, we introduce a unifying framework for these results using finite point-set topologies. Given a finite topology, we define a graph whose vertices correspond to its elements, with edges determined by intersections of their minimal containing sets, where intersection is understood in terms of the topological separation axioms. This construction yields a natural sequence of graph classes—one for each separation axiom—that connects and extends both classical results in a structured and intuitive way.
\end{abstract}

\maketitle

\section{Introduction}

Given a partially ordered set (poset) $P = (X,\leq)$, one can construct many graphs which have been of interest in extremal set theory and graph theory. Among the most famous is the \textit{comparability graph} of $P$ which is the graph whose vertices are $X$ and $x,y\in X$ are adjacent if $x\leq y$ or $y\leq x$. In general, we say a graph $G = (V,E)$ is a comparability graph if it arises from some poset $P = (V,\leq)$. There are many graph-theoretic characterizations for comparability graphs including graphs which are \textit{transitively orientable}, that is, there exists an orientation of $G$ such that if $x\to y$ and $y\to z$, then $x\to z$. This orientation can be easily obtained from the comparability graph of a poset by orienting $x\to y$ if $x\leq y$. Comparability graphs are both subclasses and superclasses of other well-known and actively researched graph classes. Subclasses include complete graphs, bipartite graphs, permutation graphs, threshold graphs, and co-graphs ($P_4$-free); see \cite{THRESHHOLD,PERMUTATION}. Superclasses include perfect graphs and word-representable graphs; see \cite{PERFECT-SURVEY,WORD-REP}.

In what may seem like a different context, a graph $G=(V,E)$ is a \textit{$\Sigma$-containment graph} if there exists a family of sets $\Sigma$ such that for each $v_i\in V(G)$, there exists a corresponding $S_i\in \Sigma$ such that $v_iv_j\in E(G)$ if and only if $S_i\subset S_j$ or $S_j \subset S_i$. In \cite{Containment}, M. Golumbic and E. Scheinerman proved that the two classes of graphs are, in fact, equivalent.

\begin{theorem}[\cite{Containment}]\label{thm: cont=comp}
A graph is a comparability graph if and only if it is a containment graph.
\end{theorem}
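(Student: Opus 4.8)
The plan is to prove the two implications separately, each by an explicit construction that is essentially forced by the definitions.

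For the forward direction, suppose $G = (V,E)$ is the comparability graph of a poset $P = (V,\le)$. I would take the family of principal down-sets: for each $v \in V$ set $S_v = \{\, w \in V : w \le v \,\}$, and let $\Sigma = \{S_v : v \in V\}$, regarded as a family of subsets of the ground set $V$. The key computation is that $u \le v$ if and only if $S_u \subseteq S_v$: one direction is transitivity of $\le$, and the other follows since $v \in S_v$. Combining this with antisymmetry (which gives $v \in S_v \setminus S_u$ whenever $u < v$, hence $S_u \subsetneq S_v$) shows that $S_u \subsetneq S_v$ holds exactly when $u < v$; and if $u$ and $v$ are incomparable then neither containment holds. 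Hence $uv \in E(G)$ if and only if $S_u \subsetneq S_v$ or $S_v \subsetneq S_u$, so $G$ is a containment graph.

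For the converse, suppose $G$ is a $\Sigma$-containment graph realized by sets $S_v$, so that $uv \in E(G)$ if and only if $S_u \subsetneq S_v$ or $S_v \subsetneq S_u$ (proper containment, since $G$ is loopless). I would define a relation on $V$ by $u < v$ iff $S_u \subsetneq S_v$, and put $\le \,=\, <\, \cup\, \{(v,v) : v \in V\}$. Since proper set containment is irreflexive, asymmetric, and transitive, $(V,\le)$ is a poset; equivalently, orienting each edge $uv$ of $G$ by $u \to v$ when $S_u \subsetneq S_v$ yields a transitive orientation of $G$, which already exhibits $G$ as a comparability graph. In either formulation, two vertices are comparable precisely when one of their representing sets is properly contained in the other, i.e. precisely when they are adjacent in $G$, so $G$ is the comparability graph of $(V,\le)$.

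There is no serious obstacle; the only points that require a little care are (i) checking that principal down-sets separate the points of $P$, so that strict containment matches strict inequality and incomparable elements really do stay non-adjacent, and (ii) observing, in the reverse direction, that one should build the order directly from $\subsetneq$ rather than from $\subseteq$ — the relation $S_u \subseteq S_v$ is only a preorder when the family $\Sigma$ contains repeated sets, so using proper containment (equivalently, passing to a transitive orientation and then to its reflexive–transitive closure) is what makes the construction go through cleanly.
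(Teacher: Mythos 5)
Your proof is correct and follows essentially the same route the paper sketches after stating the theorem: principal down-sets $S(x)=\{y: y\le x\}$ for the poset-to-containment direction, and a transitive orientation induced by proper containment for the converse. The extra care you take with strictness and antisymmetry is sound but does not change the argument.
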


To see one direction of this theorem, for each $S_i,S_j\in \Sigma$ with $S_i$ and $S_j$ adjacent in $G$, we let $S_i\leq S_j$ if $S_i\subset S_j$, or, in terms of an orientation we can orient $S_i\to S_j$. To see the other direction of this characterization, one can construct a set system $\Sigma$ by associating for each element $x$ of a poset $P =(X,\leq)$, the set $S(x) = \{y:y\leq x\}$.

Another lesser-known graph obtained from a poset $P=(X,\leq)$ is the \textit{upper bound graph of $P$}. Like the comparability graph of $P$, this graph also has vertex set $X$ but $x,y\in X$ are adjacent if and only if there exists some $z\in X$ with $x\leq z$ and $y\leq z$. If $x \leq y$, i.e. the edge $xy$ exists in the comparability graph of $P$, then $y$ satisfies the conditions for $z$, as $y$ is a common upper bound for $x$ and $y$. Hence, the edges in an upper bound graph of $P$ can be obtained by taking the edges of the comparability graph of $P$ and adding edges. This notion of additional edges will be at the heart of our paper. 

Once again, in what may seem like a shift in direction, given a graph $G = (V,E)$, we say $v\in V(G)$ is a \textit{simplicial vertex} if its closed neighborhood, $N[v]$, induces a clique (a complete subgraph). We say a graph $G = (V,E)$ is an \textit{edge-simplicial} graph if there exists a collection of cliques that cover the edges of $G$ such that every clique contains a simplicial vertex. Note that this definition of edge-simplicial graphs is equivalent to requiring every vertex to either be a simplicial vertex or to have a neighbor which is a simplicial vertex. These graphs are also equivalent to the class of graphs whose clique edge-cover number equals its clique vertex-cover number. These parameters are defined as the minimum number of cliques in a graph required to cover every edge (vertex). We will discuss edge-simplicial graphs in more detail in \cref{sec:More with T2}. In \cite{UPPERBOUND}, McMorris and Zaslavsky in fact, showed that the class of edge-simplicial graphs are equivalent to upper bound graphs.

\begin{theorem}[\cite{UPPERBOUND}]\label{thm:upperbound}
A graph is an upper bound graph if and only if it is an edge-simplicial graph.   
\end{theorem}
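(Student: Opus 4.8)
The plan is to prove both implications by a direct construction; the forward direction is quick, and the reverse direction carries essentially all the work. For the forward direction, suppose $G$ is the upper bound graph of a finite poset $P=(X,\leq)$, and focus on the maximal elements of $P$. If $m$ is maximal, then $m$ is its own only upper bound, so the closed neighborhood $N[m]$ in $G$ is exactly the principal down-set $\{x\in X : x\leq m\}$; and this down-set induces a clique of $G$, since any two of its members share $m$ as a common upper bound. Hence every maximal element of $P$ is a simplicial vertex of $G$. Since any edge $xy$ of $G$ has a common upper bound $z$, and any maximal $m\geq z$ (one exists because $X$ is finite) then has $x,y\in N[m]$, the family $\{\,N[m]: m\text{ a maximal element of }P\,\}$ is an edge clique cover in which every clique contains a simplicial vertex; so $G$ is edge-simplicial.

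For the reverse direction, start from an edge clique cover of $G=(V,E)$ in which every clique contains a simplicial vertex. First I would normalize it: a clique containing a simplicial vertex $s$ is contained in $N[s]$, so the cover may be taken to consist of closed neighborhoods $N[s]$ of simplicial vertices $s$. The key observation is that \emph{adjacent simplicial vertices have equal closed neighborhoods}: if $u\sim u'$ are both simplicial and $w\in N[u]$, then $w$ and $u'$ are two vertices of the clique $N[u]$, hence adjacent or equal, so $w\in N[u']$; by symmetry $N[u]=N[u']$. It follows that the distinct closed neighborhoods in the cover are realized by a set $S$ of pairwise non-adjacent simplicial vertices, with $\{\,N[t]:t\in S\,\}$ still covering $E$. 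Now define a relation on $V$ by $x\leq x$ for all $x$ and $x<y$ precisely when $y\in S$ and $xy\in E$. Because $S$ is independent, a chain $x<y<z$ would force $y\in S$ (from $x<y$) yet $y\notin S$ (as $y$ has the neighbor $z\in S$) — impossible; so there are no strict chains of length two, which makes antisymmetry and transitivity immediate. Thus $\leq$ is a partial order on $V$ in which every $t\in S$ is maximal.

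It then remains to verify that the upper bound graph of $(V,\leq)$ is $G$. If $xy\in E$, pick $t\in S$ with $x,y\in N[t]$: if $t\in\{x,y\}$ then $t$ is trivially a common upper bound of $x$ and $y$, and otherwise $x<t$ and $y<t$, so $t$ is again a common upper bound; in all cases $xy$ lies in the upper bound graph. Conversely, suppose $x\neq y$ have a common upper bound $z$. If $z=x$ then $y<x$, which by definition means $xy\in E$; the case $z=y$ is symmetric; and if $z\notin\{x,y\}$ then $x<z$ and $y<z$, so $z\in S$ and $x,y$ are distinct vertices of the clique $N[z]$ (as $z\in S$ is simplicial), whence $xy\in E$. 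Hence the upper bound graph is exactly $G$.

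The step I expect to be the genuine obstacle is extracting a \emph{legitimate} partial order from an edge clique cover. The naive rule ``$x<y$ iff $y$ is the designated simplicial vertex of a covering clique containing $x$'' can fail antisymmetry — two adjacent vertices that are both simplicial each pull the other down — and trying to repair this by imposing a linear order on the simplicial vertices then breaks transitivity, because adjacency in $G$ is not transitive. The observation that adjacent simplicial vertices share a closed neighborhood is precisely what removes the conflict: it lets one pass to an independent family $S$, after which the constructed poset has height one on the relevant part and all such obstructions disappear.
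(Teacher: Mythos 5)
Your proof is correct. Note, though, that the paper never proves this statement itself: it is quoted as a classical theorem of McMorris and Zaslavsky (the citation \cite{UPPERBOUND}), and the only route to it inside the paper is indirect, via the topological machinery --- \cref{thm:poset-characterization}(3) identifies $\G_2$ with upper bound graphs, and \cref{thm:G2-characterization} identifies $\G_2$ with graphs admitting a clique cover in which every clique contains a simplicial vertex. Your argument is a self-contained, elementary proof that bypasses topologies entirely. The forward direction via principal down-sets of maximal elements is exactly the natural construction and is airtight. In the reverse direction, your key lemma that adjacent simplicial vertices have equal closed neighborhoods is precisely the normalization needed to make the order antisymmetric; it plays the same structural role as the paper's height-$2$ reduction (\cref{lem: height2}) and the $m_\tau(x)\neq m_\tau(y)$ cleanup (\cref{lem:m(x)=m(y)}) do in the topological setting, where the independent set $S$ corresponds to the bottom layer $L_1$ of singleton minimal sets and your height-one poset corresponds to a height-$2$ topology. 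Your diagnosis of where the naive construction fails (antisymmetry for mutually adjacent simplicial vertices) and why passing to an independent family repairs it is exactly right; the resulting two-level poset makes transitivity vacuous, and your verification that its upper bound graph recovers $G$ is complete, including the edge cases $t\in\{x,y\}$ and $z\in\{x,y\}$. What your approach buys is a short, citation-free proof of the classical equivalence; what the paper's approach buys is that the same scaffolding generalizes uniformly to $\Gthreeand$, $\Gthreeor$, and $\G_4$, where no such two-level poset trick is available.
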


The goal of this paper is to unify these classical results and extend them both in terms of graphs generated from a poset and graphs with a clique-like structure (as seen with edge-simplicial graphs). To do this, we will soon define a sequence of six graph classes \[\G_0,\hspace{.1cm}\G_1,\hspace{.1cm}\G_2,\hspace{.1cm}\Gthreeand,\hspace{.1cm} \Gthreeor, \text{ and }\hspace{.1cm} \G_4\]
whose definitions we deliberately hold off on giving until \cref{sec:building-graphs}. These graph classes and the framework used to extend the notions of \cref{thm: cont=comp} and \cref{thm:upperbound} are built using finite topologies, which will require some further definitions. Before giving those, we first extend the notion of comparability graphs and upper bound graphs by introducing three additional new graphs obtained from a poset. 

Let $P=(X,\leq)$ be a poset. The \textit{half-closed upper bound graph of $P$} is the graph whose vertices are $X$ and $x,y\in X$ are adjacent if and only if there exists $a,b\in X$ such that $a\leq x$, $a\leq b$, and $y\leq b$ \textit{and} there exists $\alpha,\beta\in X$ such that $\alpha\leq y$, $\alpha\leq \beta$, and $x\leq \beta$. In words, to be adjacent, two elements of a poset do not necessarily need to have a common upper bound anymore, but rather they each need a lower bound ($a$, or $\alpha$) which together with $x$ or $y$ have a common upper bound ($b$ or $\beta$). If we relax the requirement of half-closed upper bound graphs to require either that there exists $a,b\in X$ such that $a\leq x$, $a\leq b$, and $y\leq b$ \textit{or} there exists $\alpha,\beta\in X$ such that $\alpha\leq y$, $\alpha\leq \beta$, and $x\leq \beta$, then we obtain the \textit{fully-closed upper bound graph of $P$.} Finally, the \textit{extended-closed upper bound graph} of $P$ is the graph whose vertices are $X$ and $x,y\in X$ are adjacent if and only if there exists $a,b,c\in X$ such that $a\leq x$, $b\leq y$, $a\leq c$ and $b\leq c$.

Given a poset $P=(X,\leq)$, as we have mentioned, the upper bound graph of $P$ can be obtained from the comparability graph of $P$ by adding additional edges. Likewise, the half-closed upper bound graph can be obtained from the upper bound graph by adding additional edges, the same process holds going to the fully-closed upper bound graph of $P$ and then the extended-closed upper bound graph of $P$. In \cref{sec: comp proof}, we prove the following.

\begin{theorem}\label{thm:poset-characterization}
The following holds.
\begin{enumerate}
    \item A graph $G\in \G_0$ if and only if $G$ is a disjoint union of cliques.
    \item A graph $G\in \G_1$ if and only if $G$ is a comparability graph.
    \item A graph $G\in \G_2$ if and only if $G$ is an upper bound graph.
    \item A graph $G\in \Gthreeand$ if and only if $G$ is a half-closed upper bound graph.
    \item A graph $G\in \Gthreeor$ if and only if $G$ is a fully-closed upper bound graph.
    \item A graph $G\in \G_4$ if and only if $G$ is an extended-closed upper bound graph.
\end{enumerate}
\end{theorem}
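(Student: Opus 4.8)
The plan is to establish all six equivalences by one two-part scheme, organized around the standard dictionary between finite topologies and finite preorders. Recall that in a finite topology $\tau$ on a set $V$ every point $x$ has a smallest open neighborhood $U_x=\bigcap\{U\in\tau:x\in U\}$; the relation $x\preceq y\iff x\in U_y\iff U_x\subseteq U_y$ is a preorder, every finite preorder arises this way, and $\tau$ is $T_0$ precisely when $\preceq$ is a partial order. Under this correspondence the building blocks of the definitions in \cref{sec:building-graphs} turn into order-theoretic statements: $x\preceq y$ reads as $x\in U_y$; ``$x$ and $y$ have a common lower bound'' reads as $U_x\cap U_y\neq\emptyset$; ``$x$ and $y$ have a common upper bound'' reads as $\overline{\{x\}}\cap\overline{\{y\}}\neq\emptyset$, where $\overline{\{x\}}=\{z:x\preceq z\}$; the smallest open set containing the closed set $\overline{\{y\}}$ is $\bigcup_{z\succeq y}U_z$, so the failure of the regularity-type separation of $x$ from $\overline{\{y\}}$ amounts to the existence of $a,b$ with $a\preceq x$, $a\preceq b$, $y\preceq b$; and the closure of $U_x$ is $\bigcup_{a\preceq x}\overline{\{a\}}$, which is what feeds the normality-type condition. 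The first step is therefore mechanical: for each separation axiom, write out the translation of the corresponding adjacency rule and check that it is the poset condition of the matching clause of \cref{thm:poset-characterization}, with the symmetric ``and'' in the rule for $\Gthreeand$ and the ``or'' for $\Gthreeor$ matching the ``and''/``or'' in the definitions of the half- and fully-closed upper bound graphs.

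Each direction is then short. For the ``if'' direction of parts (2)--(6): from a poset $P=(V,\leq)$, put on $V$ its Alexandrov topology (open sets the down-sets), so that $U_x$ is the principal down-set of $x$; the translations show the $\G_i$ adjacency rule reproduces exactly the edge set of the corresponding poset graph of $P$ (the comparability graph for $\G_1$, the upper bound graph for $\G_2$, and so on), so that graph lies in $\G_i$. For part (1) use instead the topology on $V$ whose opens are the unions of the cliques of the given disjoint union of cliques, for which $U_u=U_v$ iff $u,v$ lie in the same clique; note this is genuinely non-$T_0$, and indeed part (1) is the one clause for which $T_0$ topologies alone do not suffice. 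For the ``only if'' direction: if $G\in\G_i$ is witnessed by a finite topology $\tau$ on $V$, pass to its specialization preorder $Q$; the same translations show $G$ is the corresponding poset graph of $Q$. The only gap is that $Q$ is merely a preorder, and this is closed by a short lemma: replacing $Q$ by the partial order $P$ obtained by imposing an arbitrary total order inside each $\preceq$-equivalence class, keeping all relations between classes, changes none of the graphs involved. Indeed $\leq_P\subseteq\leq_Q$ (each new within-class step already holds in $Q$, a class being a block of mutually equivalent elements), which gives one edge-inclusion for the positive-existential conditions defining the $\G_i$; for the reverse, replace each witness occurring on the ``lower'' side of a condition by the bottom of its class and each ``upper'' witness by the top, and verify the few required inequalities. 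Part (1) needs no such step since $U_u=U_v$ is already an equivalence relation; and part (2) can be shortcut entirely, because the $\G_1$ rule says precisely that $\{U_v\}_{v\in V}$ is a containment representation of $G$, so $\G_1$ is the class of containment graphs and \cref{thm: cont=comp} applies, while part (3) may similarly be cross-checked against \cref{thm:upperbound}.

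The main work, and the only real obstacle, should be parts (4)--(6), where the rules carry two or three existential witnesses under an ``and'' or an ``or'': one must check both that the separation condition unwinds to exactly the stated multi-quantifier order condition — this is where getting the identities $\overline{U_x}=\bigcup_{a\preceq x}\overline{\{a\}}$ and ``smallest open neighborhood of $\overline{\{y\}}$'' exactly right is essential — and that the preorder-to-poset replacement respects these conditions witness by witness. No conceptual difficulty seems to lie beyond this bookkeeping: once the identities are pinned down the six equivalences follow uniformly. Along the way it is worth recording, for a fixed poset, the chain of edge-containments comparability $\subseteq$ upper bound $\subseteq$ half-closed $\subseteq$ fully-closed $\subseteq$ extended-closed, both as a consistency check on the translations and as the precise form of the ``adding edges'' intuition highlighted in the introduction.
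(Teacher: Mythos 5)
Your proposal is correct in outline but follows a genuinely different route from the paper for clauses (3)--(6). The paper also handles (2) by noting that the minimal open sets give a containment representation and invoking \cref{thm: cont=comp}, but for the remaining clauses it works inside the comparability graph $H=G_1(\tau)$: it proves a distance lemma ($T_i$-adjacency forces $\mathrm{dist}_H(u,v)\le i$), passes to the subspace topology on a shortest $u,v$-path, classifies the transitive orientations of induced paths, and reads off the minimal sets case by case. You instead translate each $T_i$-adjacency condition directly into the specialization (pre)order and match it against the poset definition; your preorder-to-poset step (totally order each equivalence class, then push lower witnesses to class minima and upper witnesses to class maxima) plays exactly the role of the paper's \cref{lem:m(x)=m(y)}, and your identity ``the smallest open neighborhood of $\overline{\{y\}}$ is $\bigcup_{z\succeq y}U_z$'' is the content of \cref{obs: topology}(4). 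Your route avoids the path analysis entirely and is arguably cleaner; the paper's route has the side benefit of exhibiting the new edges of $G_i(\tau)$ concretely as short paths in $H$ with prescribed orientations, which is reused elsewhere.

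Two points need repair before the ``mechanical check'' goes through. First, the orientation convention: with $x\preceq y\iff U_x\subseteq U_y$ and opens taken to be down-sets, the condition $U_x\cap U_y\neq\emptyset$ is the common \emph{lower} bound condition, so clause (3) as you have set it up produces the lower bound graph of $P$ rather than its upper bound graph, while your unwinding of the $T_{3'}$ condition lands on the upper-bound-flavored form. You should fix a single convention throughout --- take opens to be up-sets, i.e.\ use the order $x\le y\iff U_y\subseteq U_x$, which is what the paper's orientation $u\to v\iff m_\tau(v)\subset m_\tau(u)$ amounts to --- or else pass to the dual poset; this is harmless at the level of graph classes (each class is closed under dualization) but must be said. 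Second, the identity $\overline{U_x}=\bigcup_{a\preceq x}\overline{\{a\}}$ is true but is not what feeds the $T_4$ condition: the minimal closed set containing the \emph{point} $x$ is $\overline{\{x\}}=\ell_\tau(x)$, not $\overline{U_x}$, and using the latter would yield a five-witness condition rather than the extended-closed one; clause (6) should be unwound from $m_\tau(\overline{\{x\}})\cap m_\tau(\overline{\{y\}})\neq\emptyset$, exactly as in (4) and (5). Finally, the containment-graph shortcut for (2) requires proper containments, so the $T_0$ reduction must be applied before invoking \cref{thm: cont=comp}, as the paper does.
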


By \cref{thm:upperbound} and \cref{thm:poset-characterization}, we see that $\G_2$ and edge-simplicial graphs are equivalent. In \cref{sec:More with T2}, we take a closer look at the graph class $\G_2$ (edge-simplicial and upper bound graphs) and fully characterize the \textit{critical graphs} of $\G_2$. By critical, we mean a graph $G\in \G_2$, but the removal of any edge set $S$ either disconnects the graph or results in $G-S\notin \G_2$. We define $\G_2^*$ as the graphs in $\G_2$ that are critical. Given a graph $G$, \textit{anchoring a star to} $G$ is the operation of adding a $K_{1,t}$ and joining the vertex in the part of size $1$ to a maximal clique of $G$. This leads to the following theorem.

\begin{theorem}\label{thm:critical-new}
$G\in \G_2^*$ if and only if $G\cong K_2$ or $G$ is connected and can be obtained by iteratively anchoring stars to $K_1$.   
\end{theorem}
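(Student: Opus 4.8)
The plan is to route everything through the intermediate characterization
\[
\G_2^{*}=\{K_2\}\ \cup\ \bigl\{\,G:\ G\text{ is connected, every block of }G\text{ is a clique, and every block has exactly one non-cut vertex}\,\bigr\},
\]
(with $K_1$ lying vacuously in the right-hand class), together with the fact that this right-hand class is exactly $\{K_1\}$ together with the graphs obtained by iteratively anchoring stars to $K_1$. I will freely use, via \cref{thm:upperbound} and \cref{thm:poset-characterization}(3), that $\G_2$ is the class of edge-simplicial graphs, and the immediate reformulation that $G\in\G_2$ iff the closed neighbourhoods $N[s]$ of the simplicial vertices $s$ of $G$ cover $E(G)$. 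Since a connected block graph is in $\G_2$ iff every block contains a non-cut vertex (the non-cut vertices are exactly the simplicial ones, and an edge lying in a block all of whose vertices are cut vertices cannot be covered), the right-hand class above sits inside $\G_2$.

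I would first prove that a connected block graph $G$ has exactly one non-cut vertex in every block if and only if $G\cong K_1$ or $G$ is obtained from $K_1$ by iteratively anchoring stars. The ``if'' direction is a routine induction on the number of anchorings: anchoring $K_{1,t}$ ($t\ge 1$) with centre $c$ to a maximal clique $Q$ of a block graph $G'$ replaces the block $Q$ by $Q\cup\{c\}$, adds the pendant blocks $\{c,\ell_i\}$, and leaves every other block and every cut/non-cut status unchanged, so the property is preserved. For ``only if'' I would induct on $|V(G)|$: for $|V(G)|\ge2$, root the block–cut tree and take a deepest cut vertex $c$; because any leaf block has exactly one cut vertex and exactly one non-cut vertex it must be an edge, so the descendant blocks of $c$ are pendant edges $\{c,\ell_1\},\dots,\{c,\ell_r\}$ with $r\ge1$. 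If $c$ has no ancestor block then $G=K_{1,r+1}$, obtained by anchoring $K_{1,r}$ to $K_1$; otherwise $c$ lies in exactly one further block $B_0$, and with $Q:=B_0\setminus\{c\}$ and $G':=G-\{c,\ell_1,\dots,\ell_r\}$ one checks that $G'$ is a connected block graph with one non-cut vertex per block, that $|V(G')|<|V(G)|$, that $Q$ is a maximal clique of $G'$, and that re-anchoring $K_{1,r}$ to $Q$ recovers $G$; the induction hypothesis finishes.

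Next I would show the right-hand class is contained in $\G_2^{*}$. For $K_2$ this is trivial (deleting its only edge disconnects it). So let $G$ be a connected block graph with one non-cut vertex per block; it is in $\G_2$ by the first paragraph. For criticality let $S\neq\emptyset$ with $G-S$ connected. Then $S$ contains no bridge (deleting a pendant edge isolates its degree-one endpoint) and does not contain all edges of any block (the unique non-cut vertex of a block would become isolated), so $S$ deletes a nonempty proper set of edges from some block $B$ with $|B|\ge3$. A cut vertex of $G$ lying in $B$ cannot be simplicial in $G-S$: connectivity of $G-S$ forces it to keep both a neighbour in $B$ and a neighbour outside $B$, and such vertices are non-adjacent in a block graph; hence the only candidate simplicial vertex of $G-S$ inside $B$ is the non-cut vertex $q^{*}$ of $B$, and $q^{*}$ is simplicial only when its surviving $B$-neighbours are pairwise adjacent in $G-S$. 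Since $S$ deletes some but not all edges of $B$, a short case check (split on whether $S$ hits an edge at $q^{*}$) produces a surviving edge of $B$ lying in no clique $N_{G-S}[s]$ with $s$ simplicial, so $G-S\notin\G_2$.

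Finally, and this is the step I expect to be the main obstacle, I would show that if $G\in\G_2^{*}$ and $G\not\cong K_2$ then $G$ lies in the right-hand class. Connectedness and $G\in\G_2$ are given, and $\ge1$ non-cut vertex per block follows from $G\in\G_2$. For $\le1$: if a block $B$ with $|B|\ge3$ had two non-cut vertices $u,v$, then $G-uv$ is connected, $u$ and $v$ become simplicial (their closed neighbourhoods shrink to $B\setminus\{v\}$ and $B\setminus\{u\}$), and every edge formerly covered only by $N[w]=B$ for a third non-cut vertex $w$ of $B$ is re-covered by $N[u]\cup N[v]$, so $G-uv\in\G_2$, contradicting criticality. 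The remaining claim, that every block of $G$ is a clique, is the hard part: if some block is not a clique, $G$ contains an induced diamond or an induced cycle of length $\ge4$, and I must exhibit an edge set whose deletion keeps $G$ connected and in $\G_2$. The leverage is that $G\in\G_2$ already severely restricts how such a configuration can be attached to the rest of $G$ (else an edge at a low-degree vertex of the configuration is left uncovered), which lets one locate a good deletion; the subtlety I anticipate is that a single edge does not always suffice (for instance, for an induced $4$-cycle whose four edges each carry a private covering vertex, one must delete a cycle edge together with one incident spoke), and one has to verify that no simplicial vertex needed for the cover is destroyed. Once block-graph-ness is established, the three preceding steps combine to give the theorem.
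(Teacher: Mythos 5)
Your reformulation of the target class as ``connected block graphs in which every block has exactly one non-cut vertex'' is a genuinely different (and clean) route, and the pieces you actually carry out are sound: the equivalence of that class with the iterated-star-anchoring construction, the criticality of such graphs, and the reduction showing a block of a critical graph cannot have two non-cut vertices. But the proposal has a genuine gap at exactly the step you flag yourself: you never prove that every block of a $\G_2$-critical graph is a clique. As written, that step is a hope (``the leverage is that $G\in\G_2$ already severely restricts how such a configuration can be attached\dots which lets one locate a good deletion'') rather than an argument, and it is not a minor cleanup. Since $\G_2$ is not hereditary and contains, e.g., $G_2(\text{Inc}(H))$ for an arbitrary graph $H$, a graph in $\G_2$ can contain an arbitrary graph as an induced subgraph inside a single block; there is no short list of local configurations (diamonds, long induced cycles) whose attachment you can case-check, and your own caveat that a single edge deletion does not always suffice shows the case analysis is open-ended. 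Without this lemma the backward inclusion $\G_2^{*}\subseteq\{K_2\}\cup\{\text{anchored-star graphs}\}$ is unproved, so the theorem is not established.

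For comparison, the paper avoids this entirely: rather than classifying the critical graphs by local structure, it shows that \emph{every} connected $G\in\G_2$ contains a connected spanning subgraph $W$ of the anchored-star form, built by taking the clique cover $\C$ from \cref{thm:G2-characterization}, a spanning tree $T_\C$ of the clique cover graph $H_G(\C)$, lifting it to a tree $T_G$ in $G$ satisfying (P1)--(P3), and then augmenting to $W$. Deleting $E(G)\setminus E(W)$ then witnesses non-criticality of any $G\not\cong W$ in one stroke, with no need to analyse how a non-clique block sits inside $G$. If you want to salvage your block-graph route, the missing lemma you need is essentially equivalent to this spanning-subgraph statement, so you would end up reproving it; alternatively, you could keep your structural characterization as a corollary of the paper's construction rather than as an intermediate step of the proof.
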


One way to interpret edge-simplicial graphs is as graphs covered by a collection of cliques, each containing a distinguished vertex—a simplicial vertex. This raises the question of whether other graph classes, such as $\Gthreeand$, $\Gthreeor$, and $\G_4$, can also be characterized in terms of clique coverings. We take a step in this direction by establishing an analogous result for $\Gthreeand$. Instead of cliques, we define a collection of subgraphs with a clique-like structure, which we call \emph{universes} (see \cref{sec:G3and}). The role of simplicial vertices in this setting is replaced by a new type of distinguished vertex, which we call \emph{suns}, as they play a central role in the universe structure. We give the necessary definitions and prove the following theorem in \cref{sec:G3and}.

\begin{theorem}\label{thm:G3and-universe}
$G\in \Gthreeand$ if and only if $G$ has an edge covering with universes $\mathcal{U} = U_1,U_2,...,U_t$ where $U_i = (s_i,P_i,M_i)$, $P = \bigcup_{i=1}^t P_i$, and $S = \{s_1,...,s_t\}$ and the following hold.

\begin{enumerate}
   \item $S\cup P$ partition $V(G)$.
   \item $\forall s_i,s_j \in S$ with $N(s_i) \cap N(s_j) \cap P \not= \emptyset$, then $s_is_j \in E(G)$.
\end{enumerate}
\end{theorem}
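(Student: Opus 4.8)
The plan is to reduce to the order-theoretic picture and argue constructively in both directions. By \cref{thm:poset-characterization}(4) a graph lies in $\Gthreeand$ exactly when it is a half-closed upper bound graph, so it suffices to prove that $G$ is a half-closed upper bound graph if and only if $G$ admits a universe edge-covering obeying (1) and (2). The guiding observation is that for a poset $P=(V(G),\le)$ realizing $G$ and any element $b$, the principal down-set $D_b=\{x:x\le b\}$ is a clique of $G$ (for $x,y\in D_b$ take $a=x$, $\beta=y$, and the common bound $b$), and that the half-closed relation adds to this union-of-down-sets structure only the edges coming from ``satellite'' vertices that reach up into some $D_b$ from below — precisely the vertices recorded by the moons $M_i$. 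A universe $U_i=(s_i,P_i,M_i)$ is thus to be read as a clique $\{s_i\}\cup P_i$ anchored at a sun $s_i$, together with the moons that attach to it.

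For the direction from half-closed upper bound graphs to universe covers, fix a realizing poset $P$. For each element $b$ that actually serves as an upper-bound witness for some edge, form a universe whose clique is a clique contained in $D_b$, whose sun $s_i$ is a distinguished vertex of it, and whose moons are the satellites of $D_b$, i.e.\ the $x\notin D_b$ with some $a\le x$ lying in $D_b$. After pruning to an irredundant family, I would verify: (a) every edge of $G$ is covered, since an edge $xy$ witnessed by $b$ for $(a,x,y)$ and by $\beta$ for $(\alpha,y,x)$ is captured jointly by $U_b$ and $U_\beta$; (b) the vertices never used as planets can be taken as the suns, yielding the partition in (1); and (c) if two suns $s_i,s_j$ have a common planet-neighbor, then the corresponding witnesses string together a chain of inequalities realizing the reaching condition in both directions, forcing $s_is_j\in E(G)$, which is (2).

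For the converse, given a universe cover satisfying (1) and (2), I would build a poset on $V(G)$ together with $t$ new maximal elements $b_1,\dots,b_t$, declaring $s_i\le b_i$ and $p\le b_i$ for every $p\in P_i$ (so that $\{s_i\}\cup P_i\subseteq D_{b_i}$ becomes a clique), and placing each moon of $M_i$ just above a suitably chosen element of $P_i$ so that it becomes half-closed-adjacent to all of $P_i$ but not to $s_i$; comparabilities among the new $b_i$ and any remaining placement choices are dictated by (2). Then I would check that the half-closed upper bound graph of this poset, restricted to $V(G)$, is exactly $G$: each edge of $G$ sits in some universe, whose structure supplies the witnesses $a,b,\alpha,\beta$, while conversely every half-closed edge between two vertices of $V(G)$ is forced to be ``routed'' through a single universe, and (1) together with (2) guarantees such a routing never produces an edge outside $E(G)$ — condition (1) keeps planets of unrelated universes apart, and (2) certifies exactly those edges between suns that the construction is bound to create.

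The technical heart of the argument — and where I expect the difficulty to concentrate — is the converse construction, specifically the rule for placing moons and ordering the new tops so that no spurious half-closed edge appears. Unlike the edge-simplicial/upper bound situation, where adjacency is controlled by a single clique, here adjacency is certified by the two-step ``reach from below, then share an upper bound'' relation, so checking faithfulness requires a case analysis according to whether the witnessing upper bound is a new top $b_i$, an old vertex, or one of the endpoints themselves; getting the moon-placement rule right so that all of these cases collapse to exactly $E(G)$ is the main obstacle.
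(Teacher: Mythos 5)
Your reduction to half-closed upper bound graphs via \cref{thm:poset-characterization}(4) is legitimate in principle (the paper instead works directly with a topology of height at most~$3$ via \cref{lem:m(x)=m(y)} and \cref{lem: height3}, which amounts to the same order-theoretic data), but the execution has genuine gaps and at least one structural error. The most serious problem is in your converse: you adjoin $t$ \emph{new} maximal elements $b_1,\dots,b_t$ to the poset and then propose to check that the half-closed upper bound graph of the enlarged poset, \emph{restricted} to $V(G)$, equals $G$. That does not prove $G\in\Gthreeand$: the theorem requires $G$ itself to be (isomorphic to) the half-closed upper bound graph of a poset on $V(G)$, and none of these classes is claimed to be hereditary (the paper explicitly notes $\G_2$ is not). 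The fix is to use the suns themselves as the common upper bounds --- in the paper's construction the minimal set of a planet $v$ is $\{v\}\cup\bigcup s_i$, i.e.\ each sun sits \emph{above} all of its planets --- so no new elements are needed; but as written your construction produces a graph on the wrong vertex set. You also misdescribe the object you are building: by \cref{def:universe} the sun is universal in its universe, so every moon \emph{is} adjacent to $s_i$ (you say it should not be), and a moon need only be adjacent to \emph{some} planet, with moon--moon adjacency governed by the biconditional in condition~(4); your rule ``adjacent to all of $P_i$ but not to $s_i$'' would not yield universes at all.

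In the forward direction the sketch is also incomplete. You never verify that your triples $(s_i,P_i,M_i)$ satisfy the universe axioms --- in particular the ``if and only if'' in condition~(4), which is the real content (the paper proves both implications using \cref{obs: topology}: a shared planet forces adjacency, and adjacency of two moons forces a shared planet, on pain of $m_\tau(\ell(x))\cap m_\tau(y)=\emptyset$). The choice of suns is likewise underspecified: ``the vertices never used as planets can be taken as the suns'' does not by itself give that $S$ and $P$ \emph{partition} $V(G)$ with one sun per universe, and the unexplained ``pruning to an irredundant family'' could destroy either the partition or the edge cover. In the paper these issues are resolved by taking the suns to be exactly the height-one elements $L_1$ (those with $m_\tau(v)=\{v\}$), which is what makes (1) and (2) fall out; your proposal would need an analogous canonical choice, not an ad hoc one. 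As it stands, the argument is a plausible outline of the right statement but does not constitute a proof.
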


We now outline the structure of the paper. In \cref{sec:building-graphs}, we define the six graph classes that extend the results of \cref{thm: cont=comp} and \cref{thm:upperbound}. This includes establishing the necessary topological definitions, and motivating  the adjacency between vertices which are interpreted via topological separation axioms. In \cref{sec:prelim}, we present key preliminary observations and lemmas that aid in characterizing these graph classes. In \cref{sec: comp proof}, we prove \cref{thm:poset-characterization}. Next, in Section \ref{sec:More with T2}, we take a closer look at the graph class $\G_2$. Among others, we prove \cref{thm:critical-new}. Finally, in \cref{sec:G3and}, we examine some properties of the graph classes, $\Gthreeand$ and prove \cref{thm:G3and-universe}. For terms not defined here, see \cite{west}.

\section{Building the graph classes $\G_0, \G_1, \G_2, \Gthreeand,  \Gthreeor, \text{ and } \G_4$}\label{sec:building-graphs}

We begin with the objects of interest for the remainder of the paper.

\begin{definition}
A \textit{(finite) topological space} $(X,\tau)$ is a pair where $X$ is a (finite) ground set and $\tau$ is a \textit{(finite) topology} or a family of subsets of $X$ such that \begin{enumerate}
    \item $\emptyset, X\in \tau$.
    \item $\tau$ is closed under unions.
    \item $\tau$ is closed under finite intersections.
\end{enumerate}
\end{definition}

If $S\subseteq X$, the collection $\tau_S = \{ S\cap U : U\in \tau\} $ is a topology on $S$, called a \textit{subspace topology}. We say $K \subseteq X$ is a \textit{closed set of $\tau$} if $X\setminus K\in \tau$. Given a topology $\tau$ on $X$ our goal is to construct a graph whose vertex set is $X$. Naturally, we want elements of $X$ to be \textit{disconnected} if they are \textit{separated} within the topology $\tau$. The notion of separation has been well studied in topology, from which the following definition arises. For convenience, throughout this paper, when we say $i = 3$, we mean the relevant statement holds for both $3'$ and $3''$. 

\begin{definition}\label{def: sep-axioms}
    Let $i\in \{0,1,2,3,4\}$. Let $\tau$ be a topology on $X$ and let $x,y\in X$. We say $x$ and $y$ are $T_i$-separated if the following holds.
    
    \begin{enumerate}
    \setlength{\itemsep}{.25em}
        \item \textit{$T_0$-separated:} If there exists a $U\in \tau$ with either $x\in U$ and $y\notin U$, or $x\notin U$ and $y\in U$. 
        \item \textit{$T_1$-separated:} If there exists $U_x,U_y\in \tau$ with $x\in U_x$, $y\notin U_x$, and $y\in U_y$,  $x\notin U_y$.
        \item \textit{$T_2$-separated:} If there exists $U_x, U_y\in \tau$ with $x\in U_x$, $y\in U_y$, and $U_x\cap U_y = \emptyset$.
        \item \textit{$\Tthreeand$-separated:} If there exists a closed set $J_x$ of $\tau$ and open sets $U_J,U_y \in \tau$ with $x\in J_x\subseteq U_J$, $y\in U_y$, and $U_J\cap U_y = \emptyset$, \textit{or} if there exists a closed set $K_y$ of $\tau$ and open sets $U_x,U_K \in \tau$ with $x\in U_x$, $y\in K_y\subseteq U_K$ and $U_x\cap U_K = \emptyset$.
        \item \textit{$\Tthreeor$-separated:} If there exists a closed set $J_x$ of $\tau$ and open sets $U_J,U_y \in \tau$ with $x\in J_x\subseteq U_J$, $y\in U_y$, and $U_J\cap U_y = \emptyset$, \textit{and} if there exists a closed set $K_y$ of $\tau$ and open sets $U_x,U_K \in \tau$ with $x\in U_x$, $y\in K_y\subseteq U_K$ and $U_x\cap U_K = \emptyset$.
        \item \textit{$T_4$-separated:} If there exists closed sets $J_x,K_y$ of $\tau$ and open sets $U_J,U_K \in \tau$ with $x\in J_x\subseteq U_J$ and $y\in K_y\subseteq U_K$, and $U_J\cap U_K = \emptyset$.
    \end{enumerate}
\end{definition}

\begin{figure}[h!]
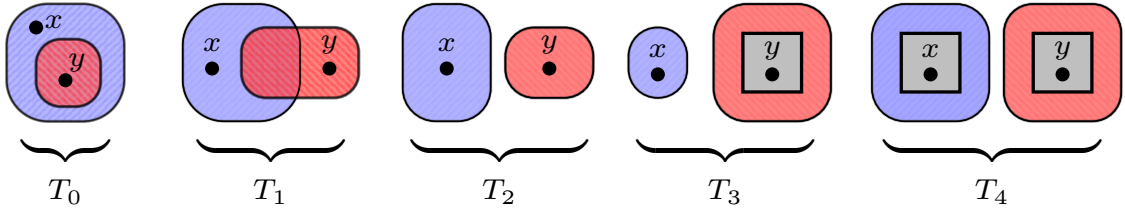

    \centering
    \includestandalone[width=.9\textwidth]{figures/separation-axioms}
    \caption{Visual interpretation of $x$ and $y$ being $T_i$-separated in \cref{def: sep-axioms}. Open sets in $\tau$ indicated in red and blue, and closed sets are indicated in gray.}
    \label{fig:sep-axioms}
\end{figure}

The definition of $T_i$-separation is motivated by the separation axiom in topology. For a visual interpretation of \cref{def: sep-axioms}, see \cref{fig:sep-axioms}, and for more on the separation axioms within current research, see the survey paper \cite{TopAxioms}. We now are ready to define the graphs of interest in this paper. Given a topology $\tau$ on $X$, we can intuitively imagine starting with a complete graph $K_{|X|}$ and removing edges between $x, y \in X$ when $x$ and $y$ are $T_i$-separated with respect to $\tau$.

\begin{definition}\label{def: Ti-sep}
Let $\tau$ be a topology on $X$ and let $i\in \{0,1,2,3,4\}$. $G_i(\tau)$ is the graph whose vertices are $X$ and $x,y\in X$ are \textit{not} adjacent if they are $T_i$-separated (see \cref{def: sep-axioms}). 
\end{definition}

\begin{figure}[h!]
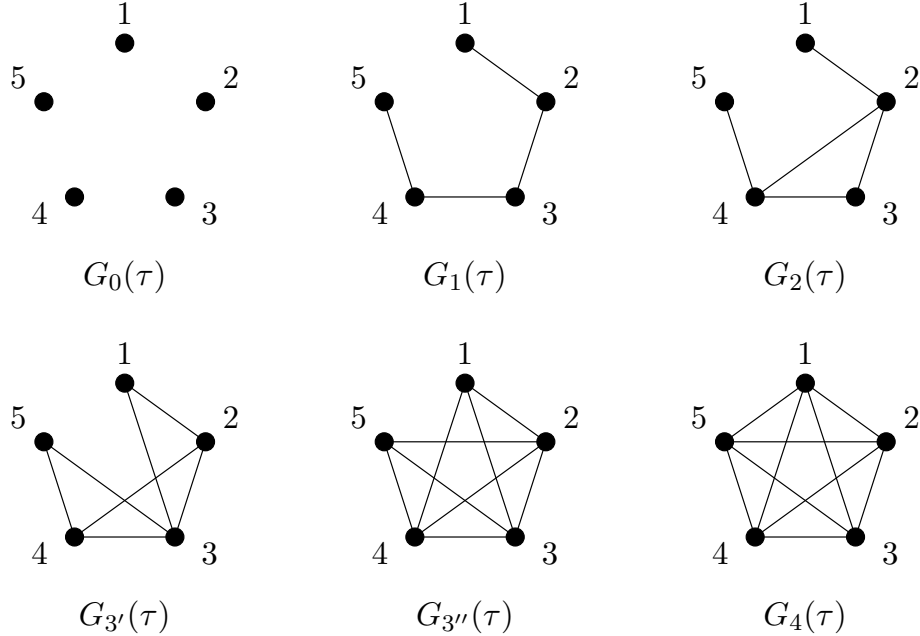

    \centering
    \includestandalone[width=.75\textwidth]{figures/Ti-examples}
    \caption{The six graphs corresponding to the topology $\tau$ on $[5]$ with minimal base $M(\tau) = \{\{1\},\{1,2,3\},\{3\},\{3,4,5\},\{5\}\}$.}
    \label{fig: 5 graphs example}
\end{figure}

One motivating reason for defining the graphs $G_i(\tau)$ in this way is that for a fixed topology $\tau$ on $X$ with $x,y\in X$, and $i\geq 1$, if $x$ and $y$ are $T_i$-separated then they are $T_{i-1}$-separated. In other words, \[E(G_0(\tau))\subseteq E(G_1(\tau))\subseteq E(G_2(\tau))\subseteq E(G_{3'}(\tau))\subseteq E(G_{3''}(\tau))\subseteq E(G_4(\tau)).\] Given two vertices $x$ and $y$ in a graph, it is often more natural to ask when are $x$ and $y$ adjacent, rather than ask when they are \textit{not} adjacent. When $\tau$ is a finite topology on $X$, to determine whether or not $x,y\in X$ are $T_i$-separated, it suffices to consider \textit{minimal sets}. For $x\in X$, the \textit{minimal set of $\tau$ containing $x$} is defined as $m_\tau(x) = \bigcap_{U\in \tau: x\in U} U$. We let $M(\tau) = \{U\in \tau: U=m_{\tau}(x) \text{ for some $x\in X$}\}$ be the \textit{minimal base} for $\tau$. We can also extend the definition of $m_{\tau}$ for subsets $S\subseteq X$ as $m_{\tau}(S) = \bigcap_{U\in \tau, S\subseteq U} U$. Given $x\in X$, we define $\overline{\ell}_{\tau}(x) = \bigcup_{U\in \tau, x\notin U}U$ and $\ell_{\tau}(x) = X\setminus \overline{\ell}_{\tau}(x)$. When $S=\ell_\tau(x)$ for some $x\in X$, for convenience, we write $m_{\tau}(\ell(x))$. We now have the following equivalent definitions for $x,y\in X$ \textit{not} being $T_i$-separated.

\begin{definition}
Let $\tau$ be a finite topology on $X$ and let $x,y\in X$. We say $x$ and $y$ are
\begin{enumerate}
    \item \textit{$T_0$-adjacent} if $m_{\tau}(x) = m_{\tau}(y)$.
    \item \textit{$T_1$-adjacent} if $m_{\tau}(x)\subseteq m_{\tau}(y)$ or $m_{\tau}(y)\subseteq m_{\tau}(x)$.
    \item \textit{$T_2$-adjacent} if $m_{\tau}(x)\cap m_{\tau}(y) \neq \emptyset$.
    \item \textit{$\Tthreeand$-adjacent} if $m_{\tau}(x) \cap m_{\tau}(\ell(y)) \neq \emptyset$ and
 $m_{\tau}(\ell(x)) \cap m_{\tau}(y) \neq \emptyset$.
 \item \textit{$\Tthreeor$-adjacent} if $m_{\tau}(x) \cap m_{\tau}(\ell(y)) \neq \emptyset$ or
 $m_{\tau}(\ell(x)) \cap m_{\tau}(y) \neq \emptyset$.
        \item \textit{$T_4$-adjacent} if $m_{\tau}(\ell(x))\cap m_{\tau}(\ell(y)) \neq \emptyset$.
\end{enumerate}
\end{definition}
In \cref{sec:prelim} (\cref{lem: sep = adj}) we explicitly show that for finite topologies \[\text{$x,y$ are $T_i$-adjacent} \iff \text{$x,y$ are not $T_i$-separated.} \]

For an example of a finite topology $\tau$ and its six corresponding graphs, see \cref{fig: 5 graphs example}. Throughout this paper, we let \[\G_i = \{G : G\cong G_i(\tau) \text{ for some finite topology $\tau$}\}.\] It should be noted that in this paper only finite topologies are considered. Infinite graphs that arise from an infinite topology, and their subsequent characterizations, remains an open problem. One of our motivating questions is to fully characterize the graphs that are in $\G_i$. For example, it is fairly easy to see that $G\in \G_0$ if and only if $G$ is a disjoint union of cliques (for completeness we will give a proof of this in \cref{sec:prelim}). However, the other graph classes, $\G_i$ for $i\in [4]$, reveal more intricate structures.

\section{Preliminary lemmas and observations}\label{sec:prelim}

Before characterizing graphs in $\G_i$, we prove several important statements regarding finite topologies and the corresponding graphs constructed by them.

\begin{lemma}\label{lem: sep = adj}
Given a finite topology $\tau$ on $X$ and $x,y\in X$, for all $i\in \{0,1,2,3,4\}$, $x,y$ are $T_i$-adjacent if and only if they are not $T_i$-separated.
\end{lemma}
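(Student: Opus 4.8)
The plan is to prove both implications by contraposition, exploiting the fact that in a \emph{finite} topology the canonical witnesses for separation (or its failure) are precisely the minimal sets. The first step is to record the structural facts that make this work. Since $\tau$ is finite and closed under finite intersections, $m_\tau(x)\in\tau$, so $m_\tau(x)$ really is the smallest open set containing $x$; in particular $x\in U\in\tau$ forces $m_\tau(x)\subseteq U$, and likewise $S\subseteq U\in\tau$ forces $m_\tau(S)\subseteq U$. Dually, $\overline{\ell}_\tau(x)=\bigcup_{U\in\tau,\, x\notin U}U$ is open, so $\ell_\tau(x)=X\setminus\overline{\ell}_\tau(x)$ is closed, contains $x$, and is the smallest closed set with this property; hence any closed set $J$ with $x\in J$ satisfies $\ell_\tau(x)\subseteq J$, and therefore $m_{\tau}(\ell(x))\subseteq m_\tau(J)$. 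Finiteness is used exactly here, to guarantee $m_\tau(x),m_\tau(S)\in\tau$.

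For $i\in\{0,1,2\}$ the claim follows at once. If $x,y$ are $T_i$-separated, substitute the witnessing open sets into these facts: from $m_\tau(x)\subseteq U_x$ and $m_\tau(y)\subseteq U_y$ the relevant minimal-set condition fails (for $i=2$, $m_\tau(x)\cap m_\tau(y)\subseteq U_x\cap U_y=\emptyset$; the cases $i=0,1$ are analogous), so $x,y$ are not $T_i$-adjacent. Conversely, if $x,y$ are not $T_i$-adjacent, I take the minimal sets themselves as witnesses: for $i=2$, $m_\tau(x)\cap m_\tau(y)=\emptyset$ is exactly $T_2$-separation; for $i=1$, $m_\tau(x)\not\subseteq m_\tau(y)$ forces $y\notin m_\tau(x)$ (otherwise $m_\tau(y)\subseteq m_\tau(x)$) and symmetrically $x\notin m_\tau(y)$, so $U_x=m_\tau(x),\,U_y=m_\tau(y)$ witness $T_1$-separation; for $i=0$, $m_\tau(x)\neq m_\tau(y)$ gives, say, $y\notin m_\tau(x)$, and $U=m_\tau(x)$ witnesses $T_0$-separation.

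For $i\in\{3',3'',4\}$ I would isolate one auxiliary equivalence that does all the work: the one-sided condition
\[
\exists\text{ closed }J_x\text{ of }\tau,\ \exists\, U_J,U_y\in\tau:\ x\in J_x\subseteq U_J,\ y\in U_y,\ U_J\cap U_y=\emptyset
\]
holds if and only if $m_{\tau}(\ell(x))\cap m_\tau(y)=\emptyset$, and symmetrically with the roles of $x$ and $y$ exchanged. The forward direction uses $m_{\tau}(\ell(x))\subseteq m_\tau(J_x)\subseteq U_J$ together with $m_\tau(y)\subseteq U_y$; the reverse direction takes $J_x=\ell_\tau(x)$, $U_J=m_{\tau}(\ell(x))$, $U_y=m_\tau(y)$. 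Granting this, $\Tthreeand$-separation is the disjunction of the two one-sided conditions, which by the equivalence is ``$m_{\tau}(\ell(x))\cap m_\tau(y)=\emptyset$ or $m_\tau(x)\cap m_{\tau}(\ell(y))=\emptyset$'', i.e. the negation of $\Tthreeand$-adjacency; the $\Tthreeor$ case is identical with the disjunction replaced by a conjunction, negating $\Tthreeor$-adjacency; and the $T_4$ case runs the same way using $J_x=\ell_\tau(x)$, $K_y=\ell_\tau(y)$, $U_J=m_{\tau}(\ell(x))$, $U_K=m_{\tau}(\ell(y))$, negating $T_4$-adjacency.

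The only real difficulty is the bookkeeping in the last step: verifying that $\ell_\tau(x)$ is genuinely the minimal closed set containing $x$ (so an arbitrary closed witness can be shrunk to it without violating $J_x\subseteq U_J$), and keeping straight which minimal-set intersection matches which disjunct once the ``and''/``or'' in the separation axioms flips under negation to the ``or''/``and'' in the adjacency definitions. Everything else is direct substitution into the minimality properties established in the first step.
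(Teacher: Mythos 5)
Your proof is correct and takes essentially the same approach as the paper: substitute the minimal open sets $m_\tau(\cdot)$ (and, for $i\geq 3$, the minimal closed sets $\ell_\tau(\cdot)$ together with $m_\tau(\ell(\cdot))$) as canonical witnesses in both directions. If anything, your treatment of the direction ``not $T_i$-adjacent $\Rightarrow$ $T_i$-separated'' via the one-sided equivalence is more explicit than the paper's one-sentence dismissal of it; the only blemish is a harmless mislabeling in the $i=1$ case, where $y\notin m_\tau(x)$ follows from $m_\tau(y)\not\subseteq m_\tau(x)$ rather than from $m_\tau(x)\not\subseteq m_\tau(y)$ (both hypotheses are available, so the argument stands).
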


\begin{proof}
First we show that if $x, y$ are $T_i$-separated then $x,y$ are not $T_i$-adjacent. Suppose $x, y$ are $T_0$-separated. Then up to symmetry, there exists a $U\in \tau$ with $x\in U$ and $y\notin U$. But then since $m_\tau(x)\subseteq U$, and $y\notin m_{\tau}(x)$, this implies that $m_{\tau}(x)\neq m_{\tau}(y)$. Thus, $x,y$ are not $T_0$-adjacent. 

Next suppose $x, y$ are $T_1$-separated. Then there exists $U_x,U_y\in \tau$ with $x\in U_x$, $y\notin U_x$, and $y\in U_y$,  $x\notin U_y$. But since $x\in m_{\tau}(x)\subseteq U_x$ and $y\in m_{\tau}(y)\subseteq U_y$, $m_{\tau}(x)\not \subseteq m_{\tau}(y)$ and $m_{\tau}(y)\not \subseteq m_{\tau}(x)$. Thus, $x,y$ are not $T_1$-adjacent. 

Suppose $x, y$ are $T_2$-separated. Then there exists $U_x, U_y\in \tau$ with $x\in U_x$, $y\in U_y$, and $U_x\cap U_y = \emptyset$. Since $m_{\tau}(x)\subseteq U_x$ and $m_{\tau}(y)\subseteq U_y$, $m_{\tau}(x)\cap m_{\tau}(y) = \emptyset$. Thus, $x$ and $y$ are not $T_2$-adjacent. 

Suppose $x,y$ are $\Tthreeand$-separated. Then up to symmetry, there exists a closed set $J_x$ of $\tau$ and open sets $U_J,U_y\in \tau$ with $x\in J_x\subseteq U_J$, $y\in U_y$, and $U_J\cap U_y = \emptyset$. Since $x\in \ell(x)\subseteq J_x\subseteq U_J$ and $y\in U_y$, $m_\tau (\ell (x))\subseteq U_J$ and $m_\tau (y)\subseteq U_y$. So $m_\tau (\ell (x))\cap m_\tau (y)=\emptyset $. Thus, $x,y$ are not $\Tthreeand$-adjacent.

Suppose $x,y$ are $\Tthreeor$-separated. Then if there exist a closed set $J_x$ and open sets $U_J,U_y$ in $\tau$ with $x\in J_x\subseteq U_J$, $y\in U_y$, and $U_J\cap U_y = \emptyset$, \textit{and} if there exist closed sets $K_y$ and open sets $U_x,U_K$ of $\tau$ with $x\in U_x$, $y\in K_y\subseteq U_K$ and $U_x\cap U_K = \emptyset$.
Since $x\in \ell(x)\subseteq J_x\subseteq U_J$ and $y\in U_y$, $m_\tau (\ell (x))\subseteq U_J$ and $m_\tau (y)\subseteq U_y$. So $m_\tau (\ell (x))\cap m_\tau (y)=\emptyset $.
And since $y\in \ell(y)\subseteq K_y\subseteq U_K$ and $x\in U_x$, $m_\tau (\ell (y))\subseteq U_K$ and $m_\tau (x)\subseteq U_x$. So $m_\tau (\ell (y))\cap m_\tau (x)=\emptyset $. Thus, $x,y$ are not $\Tthreeor$-adjacent.

Suppose $x,y$ are $T_4$-separated. Then there exist closed sets $J_x,K_y$ and open sets $U_J,U_K$ of $\tau$ with $x\in J_x\subseteq U_J$ and $y\in K_y\subseteq U_K$, and $U_J\cap U_K = \emptyset$. Since $x\in \ell(x)\subseteq J_x$ and $y\in \ell(y)\subseteq K_y$, $m_\tau (\ell (x))\subseteq U_J$ and $m_\tau (\ell (y))\subseteq U_K$. So $m_\tau (\ell (x))\cap m_\tau (\ell (y))=\emptyset $. Thus, $x,y$ are not $T_4$-adjacent.

The other direction also holds since if $x$ and $y$ are not $T_i$-adjacent then $m_{\tau}(x)$ and $m_{\tau}(y)$ are the sets in $\tau$ which are forbidden in the definition of $T_i$-separation.
\end{proof}

\begin{proposition}\label{prop:G0}
 $G\in \G_0$ if and only if $G$ is a disjoint union of cliques.
\end{proposition}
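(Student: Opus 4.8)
The plan is to prove both directions using the characterization of $T_0$-adjacency, namely that $x,y \in X$ are adjacent in $G_0(\tau)$ if and only if $m_\tau(x) = m_\tau(y)$. This equivalence relation on $X$ — call it $\sim$, where $x \sim y$ iff $m_\tau(x) = m_\tau(y)$ — is obviously reflexive, symmetric, and transitive, so it partitions $X$ into equivalence classes. The key observation is that two vertices are adjacent in $G_0(\tau)$ precisely when they lie in the same class, which immediately says $G_0(\tau)$ is a disjoint union of cliques (one clique per equivalence class).

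For the forward direction, I would take an arbitrary $G \in \G_0$, so $G \cong G_0(\tau)$ for some finite topology $\tau$ on $X$. By \cref{lem: sep = adj}, $xy \in E(G_0(\tau))$ iff $x,y$ are $T_0$-adjacent iff $m_\tau(x) = m_\tau(y)$. Since equality of minimal sets is an equivalence relation, the connected components (in fact the maximal cliques) of $G_0(\tau)$ are exactly the equivalence classes, and within each class every pair is adjacent while across classes no pair is adjacent. Hence $G_0(\tau)$, and therefore $G$, is a disjoint union of cliques.

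For the reverse direction, suppose $G$ is a disjoint union of cliques, with vertex set $V$ partitioned into cliques $C_1, \dots, C_k$. I would construct a finite topology $\tau$ on $X = V$ realizing $G$: take $\tau$ to be generated by the sets $C_1, \dots, C_k$, i.e. $\tau = \{\emptyset\} \cup \{\bigcup_{j \in S} C_j : S \subseteq [k]\}$. Since the $C_j$ are pairwise disjoint, this family is closed under arbitrary unions and finite intersections and contains $\emptyset$ and $X$, so it is a topology. For any $x \in C_j$, the minimal open set containing $x$ is exactly $C_j$ (any open set containing $x$ must include the block $C_j$ in its union), so $m_\tau(x) = C_j$. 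Thus $m_\tau(x) = m_\tau(y)$ iff $x,y$ lie in the same $C_j$, which is exactly the adjacency relation of $G$. Therefore $G \cong G_0(\tau) \in \G_0$.

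There is no serious obstacle here; the statement is essentially definitional once one recognizes that $T_0$-adjacency is the equivalence relation "same minimal set." The only point requiring a small amount of care is verifying that the constructed $\tau$ is genuinely a topology (closure under unions and finite intersections) and that the minimal sets come out as claimed — both of which follow cleanly from the pairwise disjointness of the cliques. I would present the equivalence-relation viewpoint first, as it makes both directions transparent.
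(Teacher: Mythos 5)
Your proof is correct, but it takes a different route from the paper's. The paper argues directly from the separation definition: it reduces to the connected case, notes that a complete graph is realized by the indiscrete topology $\tau=\{\emptyset,V(G)\}$, and for the forward direction assumes $G$ is connected but not complete, picks non-adjacent $x,y$, takes an open $U_x$ with $x\in U_x$, $y\notin U_x$, and observes that $U_x$ then disconnects the graph (no vertex inside $U_x$ can be adjacent to any vertex outside it), a contradiction. You instead observe that $T_0$-adjacency is the equivalence relation ``$m_\tau(x)=m_\tau(y)$,'' so $G_0(\tau)$ is automatically a disjoint union of cliques, and for the converse you build the topology generated by the (pairwise disjoint) cliques and compute the minimal sets. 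Both arguments are valid. Yours is arguably cleaner: the equivalence-relation viewpoint makes the forward direction immediate without a connectivity argument, and your explicit construction of a single topology for the whole disjoint union is more careful than the paper's ``the same argument holds for each component.'' The paper's version has the minor virtue of working straight from the $T_0$-separation definition without invoking the minimal-set reformulation (\cref{lem: sep = adj}), though that lemma is proved before the proposition, so your reliance on it is unobjectionable.
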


\begin{proof}
If $G$ is disconnected, the same argument holds for each component, so we may assume $G$ is connected. It is easy to see that any complete graph is in $\G_0$ by letting $\tau = \{\emptyset, V(G)\}$. Let $G \in \G_0$ and assume for contradiction that $G$ is connected but not complete. Then there exists a vertex $x$ which is not adjacent to some vertex $y$. Since they are not adjacent, without loss of generality, we can assume there is an open set $U_x$ such that $x\in U_x$ and $y\notin U_x$. Then every vertex in $U_x$ is not adjacent to every vertex that is not in $U_x$, which contradicts the connectedness of $G$.
\end{proof}

The following observations will be quite useful when handling minimal sets in finite topologies.

\begin{observation}\label{obs: topology}
Let $\tau$ be a finite topology on $X$ and let $x,y,z\in X$. The following hold.
\begin{enumerate}
    \item\label{obs:intersection} If $y\in m_{\tau}(x)$ then $m_{\tau}(y)\subseteq m_{\tau}(x)$.
    \item\label{obs: ell intersection} $x\in \ell_{\tau}(y)$ if and only if $m_{\tau}(y)\subseteq m_{\tau}(x)$.
    \item If $m_{\tau}(x)\subset m_{\tau}(y)$ then $x\notin \ell_{\tau}(y)$ and $m_{\tau}(x)\subseteq m_{\tau}(\ell(y))$.
    \item\label{obs: ell must exist} If $m_{\tau}(\ell(x))\cap m_{\tau}(y)\neq \emptyset$, then there exists some $w\in \ell_{\tau}(x)$ such that $m_{\tau}(w)\cap m_{\tau}(y)\neq \emptyset$.
    \item\label{obs:T3-and} If $x,y\in m_{\tau}(z)$, then $m_{\tau}(\ell(x))\cap m_\tau(y)\neq \emptyset$ and $m_{\tau}(x)\cap m_{\tau}(\ell(y))\neq \emptyset$.
\end{enumerate}
\end{observation}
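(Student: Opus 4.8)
The plan is to reduce all five items to two structural facts about finite topologies and then read each item off mechanically. The first fact is a membership criterion for $\ell_\tau$: for any $w,x\in X$,
\[
w\in \ell_\tau(x)\iff x\in m_\tau(w)\iff m_\tau(x)\subseteq m_\tau(w).
\]
The first equivalence is just unwinding the definition $\ell_\tau(x)=X\setminus\bigcup_{U\in\tau,\,x\notin U}U$: the point $w$ lies outside every open set missing $x$ precisely when every open set containing $w$ also contains $x$, i.e. $x\in m_\tau(w)$. The second equivalence holds because $m_\tau(w)$ is itself an open set containing $w$, so $x\in m_\tau(w)$ forces $m_\tau(x)\subseteq m_\tau(w)$, while conversely $x\in m_\tau(x)\subseteq m_\tau(w)$. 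The second fact is the pointwise description of $m_\tau$ on subsets: for $S\subseteq X$,
\[
m_\tau(S)=\bigcup_{w\in S}m_\tau(w),
\]
since the right-hand side is open (a union of open sets) and contains $S$, whereas every open $U\supseteq S$ contains $m_\tau(w)$ for each $w\in S$ and hence contains the union; finiteness of $\tau$ is what guarantees $m_\tau(S)\in\tau$. I would also record once the small remark that $\ell_\tau(y)$ always contains $y$ (as $y\in m_\tau(y)$), so $m_\tau(\ell(y))$ is an open set containing $y$ and therefore $m_\tau(y)\subseteq m_\tau(\ell(y))$.

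With these in hand the five items fall out. Item (1) is immediate, since $m_\tau(x)$ is an open set containing $y$, so $m_\tau(y)\subseteq m_\tau(x)$. Item (2) is exactly the first fact with $w$ taken to be $x$ and the distinguished point taken to be $y$. For Item (3): if $m_\tau(x)\subsetneq m_\tau(y)$ then $m_\tau(y)\not\subseteq m_\tau(x)$, so Item (2) gives $x\notin\ell_\tau(y)$; and $m_\tau(x)\subseteq m_\tau(y)\subseteq m_\tau(\ell(y))$ by the remark above. For Item (4): write $m_\tau(\ell(x))=\bigcup_{w\in\ell_\tau(x)}m_\tau(w)$ by the second fact; a point of $m_\tau(\ell(x))\cap m_\tau(y)$ then lies in some $m_\tau(w)$ with $w\in\ell_\tau(x)$, and that same point witnesses $m_\tau(w)\cap m_\tau(y)\neq\emptyset$. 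For Item (5): $x\in m_\tau(z)$ gives $z\in\ell_\tau(x)$ by the first fact, hence $m_\tau(z)\subseteq m_\tau(\ell(x))$ (it is an open set containing $z$); since $y\in m_\tau(z)$ as well, $y\in m_\tau(\ell(x))\cap m_\tau(y)$, so this intersection is nonempty, and the other conjunct follows by interchanging $x$ and $y$.

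I expect no genuine obstacle here: once the two facts above are isolated, everything is bookkeeping. The only points needing a moment's care are (i) invoking finiteness of $\tau$ to know $m_\tau(S)$ is actually open, and (ii) the inclusion $m_\tau(y)\subseteq m_\tau(\ell(y))$ coming from $y\in\ell_\tau(y)$, which is the one small observation driving Items (3) and (5). I would state the two facts as a short lemma (or fold them into the proof) before proving the observation so they can be reused later without reproof.
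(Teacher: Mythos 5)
Your proposal is correct and follows essentially the same route as the paper: the membership criterion $x\in\ell_\tau(y)\iff m_\tau(y)\subseteq m_\tau(x)$ is exactly item (2), the identity $m_\tau(\ell(x))=\bigcup_{w\in\ell_\tau(x)}m_\tau(w)$ is precisely what the paper establishes for item (4), and items (3) and (5) are derived from (1) and (2) just as in the paper. The only cosmetic differences are that you prove (1) directly rather than by contradiction and you make explicit the inclusion $m_\tau(y)\subseteq m_\tau(\ell(y))$ via $y\in\ell_\tau(y)$, which the paper leaves implicit.
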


\begin{proof}
We first show (1). Suppose for the contradiction that $m_{\tau}(y)\not \subseteq m_{\tau}(x)$, then $m_{\tau}(y) \cap m_{\tau}(x)$ is an open set containing $y$ since $y$ is in $m_{\tau}(x)$, which is contained in $m_{\tau}(x)$, and which contradicts to the minimality of $m_{\tau}(y)$.

Next we prove (2). If $x\in \ell_{\tau}(y)$, then by definition, $x\notin \overline{\ell}_{\tau}(y)$. But this means that any set containing $x$ must contain $y$. Hence, $m_{\tau}(y)\subseteq m_{\tau}(x)$. If $m_{\tau}(y)\subseteq m_{\tau}(x)$, then $x\notin \overline{\ell}_{\tau}(y)$ which implies $x\in \ell_{\tau}(y)$.

Next we prove (3). If $m_{\tau}(x)$ is a strict subset of $m_{\tau}(y)$ then there exists a set $A\in \tau$ with $x\in A$ and $y\notin A$. Hence, $x\in \overline{\ell}_{\tau}(y)$ which implies $x\notin \ell_{\tau}(y)$. We also have $m_{\tau}(x)\subset m_{\tau}(y)\subseteq m_{\tau}(\ell(y))$.

Next we prove (4). It suffices to show that $m_{\tau}(\ell(x)) = \bigcup_{a\in \ell_\tau(x)} m_{\tau}(a)$. Certainly $m_{\tau}(\ell(x))\subseteq \bigcup_{a\in \ell_{\tau}(x)}m_{\tau}(a)$ so we now have to show that there is no smaller set in $\tau$ containing $\ell(x)$. This is true because for any $b\in \bigcup_{a\in \ell_{\tau}(x)}m_{\tau}(a)$, $b\in m_{\tau}(a)\subseteq m_{\tau}(\ell(x))$ for some $a\in \ell_{\tau}(x)$ and thus, $\bigcup_{a\in \ell_{\tau}(a)}m_{\tau}(a)\subseteq m_{\tau}(\ell(x))$.

Lastly we prove (5). If $x,y\in m_{\tau}(z)$, then by (1), $m_{\tau}(x)\subseteq m_{\tau}(z)$ and $m_{\tau}(y)\subseteq m_{\tau}(z)$. But then by (2), $z\in \ell_{\tau}(x)\cap \ell_{\tau}(y)$. Thus, $m_{\tau}(\ell(x))\cap m_{\tau}(y)\neq \emptyset$ and $m_{\tau}(x)\cap m_{\tau}(\ell(y))\neq \emptyset$.
\end{proof}

A given graph $G\in \G_i$ may not necessarily have a unique finite topology associated with $G$. For example, let $X = \{1,2,3\}$, let $\tau = \{\emptyset,\{1\}, \{1,2\},\{1,2,3\}\}$, let $\tau' = \{\emptyset,\{1,2,3\}\}$, and let $\tau'' = \{\emptyset,\{1,2\},\{2\}, \{2,3\}, \{1,2,3\}\}$. In all three cases, the $G_2(\tau)$, $G_2(\tau')$, and $G_2(\tau'')$ are all isomorphic to $K_3$. Because of this, in analyzing a graph $G$ it will be helpful to assume we are working with a topology with certain properties. To this end, we prove the following three lemmas.

\begin{lemma}\label{lem:m(x)=m(y)}
Let $i\in \{1,2,3,4\}$. For every $G \in \G_i$, there exists a finite topology $\tau$ on $V(G)$ such that $G_i(\tau)$ is isomorphic to $G$ and $m_{\tau}(x)\neq m_{\tau}(y)$ for all $x,y \in V(G)$.
\end{lemma}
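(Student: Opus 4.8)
The plan is to take any finite topology $\sigma$ on a ground set with $G_i(\sigma)\cong G$; transporting $\sigma$ along the isomorphism, we may as well assume $\sigma$ is a topology on $V(G)$ with $G_i(\sigma)=G$ (literally, not merely up to isomorphism). Write $N_x:=m_\sigma(x)$. The only obstruction to the conclusion is the presence of ``twins'': points $x\ne y$ with $N_x=N_y$. By \cref{obs: topology}\,(1)--(2) the relation $x\approx y:\Leftrightarrow N_x=N_y$ is an equivalence relation, and each class $C$ lies inside the common value $N$ of $N_x$ on $C$. I would fix a linear order $C=\{c_1<\cdots<c_{|C|}\}$ on each class, set $\mathrm{down}(x)=\{c_1,\dots,c_a\}$ for $x=c_a$ in its class $C(x)$, and define candidate minimal sets
\[
 N'_x\ :=\ \mathrm{down}(x)\ \cup\ \bigl(N_x\setminus C(x)\bigr).
\]
Inside a class these form a strict chain $N'_{c_1}\subsetneq\cdots\subsetneq N'_{c_{|C|}}=N$, and a short argument shows points in different classes get different $N'$-values, so the $N'_x$ are pairwise distinct.

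First I would check that $\{N'_x\}$ is a bona fide minimal base, i.e.\ $x\in N'_y\Rightarrow N'_x\subseteq N'_y$: this is clear when $x\approx y$, and when $x\not\approx y$ one notes $x\in N'_y$ forces $x\in N_y\setminus C(y)$, hence $N_x\subsetneq N_y$, and then an elementary containment chase---using that no element of $C(y)$ of index exceeding that of $y$ can lie in $N_x$---yields $N'_x\subseteq N_x\subseteq N'_y$. Granting this, the family $\tau$ of all unions of the $N'_x$ (together with $\emptyset$) is a topology on $V(G)$, with closure under intersection coming from the minimal-base property just verified, and with $m_\tau(x)=N'_x$ for every $x$. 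What remains is to prove $G_i(\tau)=G_i(\sigma)\,(=G)$ as graphs, i.e.\ that $T_i$-adjacency is unchanged.

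For that I would establish two facts. (a) $m_\tau(\ell(x))=m_\sigma(\ell(x))$ for all $x$: using the identity $m_\tau(\ell(x))=\bigcup\{m_\tau(z):m_\tau(x)\subseteq m_\tau(z)\}$ from the proof of \cref{obs: topology}\,(4), together with the description of $N'$-containment from the previous step, both sides work out to $N_x\cup\bigcup\{N_z:N_z\supsetneq N_x\}$. (b) A ``key lemma'': for any $x$ and any $\sigma$-saturated set $S$ (i.e.\ $w\in S\Rightarrow N_w\subseteq S$---in particular any open set of $\sigma$, and any set $m_\sigma(\ell(y))$), we have $N'_x\cap S\ne\emptyset\iff N_x\cap S\ne\emptyset$; one direction is immediate from $N'_x\subseteq N_x$, and for the other a witness $w\in N_x\cap S$ can be traded for the minimum element of its $\approx$-class, which lies in both $N'_x$ and $S$. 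With (a) and (b) the six cases fall out: $T_0$ is excluded from the statement (and indeed making all minimal sets distinct would kill every edge); $T_1$ holds because $N'$-comparability and $N$-comparability coincide; $T_2$ is (b) with $S=N_y$; and $\Tthreeand,\Tthreeor,T_4$ follow by applying (b) with $S=m_\sigma(\ell(y))$ (and the symmetric version) together with (a).

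The conceptual content---``blow up each clique of twins into a short nested chain of minimal sets, staying inside that clique''---is simple; the main work, and the only place demanding care, is the bookkeeping in (a), (b), and the minimal-base check, since one must confirm that shrinking $N_x$ to the smaller $N'_x$ never accidentally creates or destroys an intersection governing an edge.
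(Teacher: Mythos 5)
Your proof is correct, and it reaches the conclusion by a genuinely different packaging of the same underlying idea. The paper argues by minimal counterexample: it picks a single twin pair $x,y$ with $m_\tau(x)=m_\tau(y)$, adjoins the one new open set $m_\tau(x)\setminus\{y\}$, and checks (via its Claim that $m_{\tau'}(\ell(v))=m_\tau(\ell(v))$ and a case analysis over $i$) that the graph is unchanged; iterating splits each twin class into a nested chain one element at a time. You instead write down the closed form of that iteration in one shot --- replacing the common minimal set of each $\approx$-class by the explicit chain $N'_{c_1}\subsetneq\cdots\subsetneq N'_{c_{|C|}}$ --- and then verify the minimal-base property, the invariance of $m(\ell(\cdot))$ (your (a), which is exactly the paper's Claim in disguise), and a clean ``saturated set'' lemma (b) that handles the intersection conditions uniformly. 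What your route buys is an explicit construction and a reusable lemma in place of an extremal induction; what it costs is the up-front bookkeeping of checking that $\{N'_x\}$ really is a minimal base, which the paper avoids because a single added open set perturbs only one class. One point to tighten: for $T_2$ you need $N'_x\cap N'_y\neq\emptyset\iff N_x\cap N_y\neq\emptyset$, and (b) with $S=N_y$ only yields $N'_x\cap N_y\neq\emptyset\iff N_x\cap N_y\neq\emptyset$; since $N'_y$ need not be $\sigma$-saturated you cannot apply (b) a second time. This is not a real gap --- your own witness-trading argument fixes it in one line, because the minimum element $c_1$ of the class of a common witness $w\in N_x\cap N_y$ lies in \emph{both} $N'_x$ and $N'_y$ (it lies in $\mathrm{down}(\cdot)$ if $N_w$ equals the corresponding $N$, and in $N\setminus C(\cdot)$ otherwise) --- but the appeal to (b) as literally stated does not cover this case.
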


\begin{proof}
Let $G= G_i(\tau)$ be a graph with associated topology $\tau$ and suppose for contradiction that there is no topology $\tau'$ such that the graph $G':=G_i(\tau')$ is isomorphic to $G$ and $m_{\tau'}(x)\neq m_{\tau'}(y)$ for all $x,y\in V(G')$. Of all counterexamples, choose such a $G$ and $\tau$ such that $\tau$ has the fewest pairs $x,y$ with $m_{\tau}(x)=m_{\tau}(y)$. 

Now consider the new topology $\tau'$ generated by all the open sets in $\tau$ and the open set $m_\tau (x)\backslash \{y\}$ (see \cref{fig:adj-top}). Clearly $m_{\tau'} (x)=m_{\tau} (x)\backslash \{y\}$. So $\tau'=\tau \cup \{U\cup m_{\tau'} (x) : U\in \tau \}$.
Let $\tau^d=\tau'\backslash \tau$, which is the collection of open sets that appear in $\tau'$ but not in $\tau$. Let $S_x=\{a\in V(G): m_\tau (a)=m_\tau (x) \} $.

First, notice 
\begin{equation}
m_{\tau'}(v) = \begin{cases}
               m_{\tau}(v)\setminus \{y\} &\text{ for all $v\in S_x\setminus \{y\}$} \\ m_{\tau}(v) &\text{ for all $v\notin S_x\setminus \{y\}$}
    \end{cases}
\end{equation}

and \begin{equation}\ell_{\tau'}(v)=\begin{cases}
         \ell_{\tau}(v)\setminus m_{\tau'}(x)  &\text{ for $v = y$}\\
        \ell_{\tau}(v) &\text{ for all $v\neq y$}.
    \end{cases}
    \end{equation}
    
To complete the proof, it suffices to show that $E(G_{i}(\tau)) = E(G_i(\tau'))$ which would contradict our choice of $\tau$. It easy to see that if $uv\notin E(G_i(\tau))$ then $uv\notin E(G_i(\tau'))$, so it suffices to show that if $uv\in E(G_i(\tau))$, then $uv\in E(G_i(\tau'))$.

To help in showing this, we first prove the following claim. \begin{claim}\label{claim: min-sets}
    $m_{\tau'}(\ell(v)) = m_{\tau}(\ell(v))$ for all $v\in V(G)$.
\end{claim}

\begin{proofc}
    If $v\notin m_\tau(x)$, then because of (2) we have $\l_{\tau'}(v)=\l_{\tau}(v)$. And since $\l_{\tau'}(v) \cap m_{\tau'}(x)=\emptyset$, $m_\tau (\l(v))\notin \tau^d$. Hence, $m_{\tau'}(\l(v))=m_{\tau}(\l(v))$. If $v=y$, by (2), we have $\l_{\tau'}(y)\cap m_{\tau'}(x)=\emptyset$, then $m_{\tau'}(\l(y))\notin\tau^d$. Thus, $m_{\tau'}(\l_(y))=m_{\tau}(\l_(y))$. If $e\in m_{\tau'}(x)$, by (2), we have $\l_{\tau'}(v)=\l_{\tau}(v)$. According to \cref{obs: topology}(\ref{obs:intersection}) and \cref{obs: topology}(\ref{obs: ell intersection}) we have $y\in \ell_{\tau'}(v)$. Hence, $y\in m_{\tau'}(\ell(v))$ which again implies $m_{\tau'}(\ell(v))\notin \tau^d$. Thus, $m_{\tau'}(\l_(v))=m_{\tau}(\l_(v))$.
\end{proofc}

We now use (1) and (2) along with \cref{claim: min-sets} to prove that for any $uv\in E(G_i(\tau))$, $uv\in E(G_i(\tau'))$.

If $i=4$, the statement holds by directly applying \cref{claim: min-sets}. If $i=3$, then it suffices to show that if $m_{\tau}(u)\cap m_{\tau}(\l(v))\neq \emptyset$, then $m_{\tau'}(u)\cap m_{\tau'}(\l(v))\neq \emptyset$. If there exist $z\in m_{\tau}(u)\cap m_{\tau}(\l(v))$ with $z\neq y$, then by (1) and \cref{claim: min-sets}, we have $z\in m_{\tau'}(u)\cap m_{\tau'}(\l(v))$. But if $m_{\tau}(u)\cap m_{\tau}(\l(v))=\{y\}$, then $\{y\}\in \tau$, which cannot happen because $m_\tau(x)=m_\tau(y)$. The case when $i = 2$, follows the same logic as the previous case with $m_{\tau}(u)$ and $m_{\tau}(v)$ instead of $m_{\tau}(u)$ and $m_{\tau}(\ell(v))$. Finally, if $i=1$, then without lose of generality, let $m_\tau(u)\subseteq m_\tau(v)$. If $y\notin m_\tau(u)$ then $m_{\tau'}(u)\subseteq m_{\tau'}(v)$ by (1). If $y\in m_\tau(u)$ then $y\in m_\tau(v)$, and then $m_{\tau'}(u)\subseteq m_{\tau'}(v)$.

 Thus, $E(G_i(\tau))=E(G_i(\tau'))$.
\end{proof}

\begin{figure}[h!]
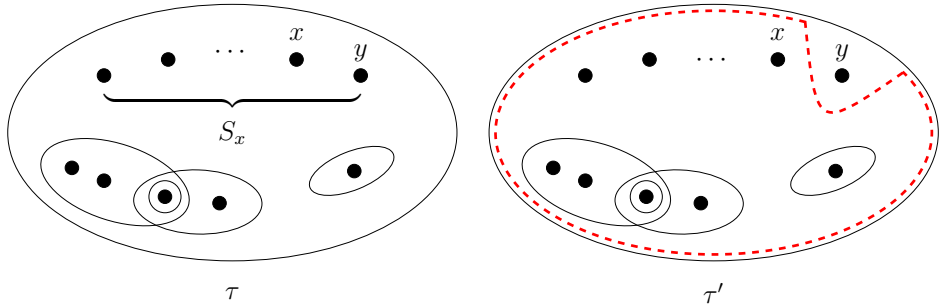

    \centering
    \includestandalone[width=0.75\textwidth]{figures/adjusting-top-1}
    \caption{An example of $m_{\tau}$ and $m_{\tau'}$ for \cref{lem:m(x)=m(y)}}
    \label{fig:adj-top}
\end{figure}

Notice that for a given graph $G$, the proof of \cref{lem:m(x)=m(y)} provides us with a way to find such a topology $\tau$ in which $G \cong G_i(\tau)$ and $m_{\tau}(x)\neq m_{\tau}(y)$ for all $x,y \in V(G)$. This is done by repeatedly adding the sets $m_{\tau}(x) \setminus\{y\}$ to $\tau$ until $m_{\tau}(x)\neq m_{\tau}(y)$ for all $x,y\in V(G)$.

Given a finite topology $\tau$ on $X$, the \textit{height} of $\tau$ is the largest integer $h$ in which there exists distinct elements, $v_1,...,v_h\in X$, with $m_{\tau}(v_1)\subseteq m_{\tau}(v_2) \subseteq \cdots \subseteq m_{\tau}(v_h)$. We say $(v_1,...,v_h)$ is an $h$-chain in $\tau$.

\begin{lemma}\label{lem: height2}
    Let $\tau$ be a finite topology on $X$. Then there exists a finite topology $\tau'$ on $X$ such that $G_2(\tau) \cong G_2(\tau')$ and the height of $\tau'$ is at most $2$.
\end{lemma}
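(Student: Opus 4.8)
The plan is to replace $\tau$ by a topology whose minimal sets are as combinatorially simple as possible while leaving the $T_2$-adjacency relation unchanged, exploiting the fact that $T_2$-adjacency depends only on which pairs of minimal sets intersect. First, by \cref{lem:m(x)=m(y)} applied with $i=2$, we may assume $m_\tau(x)\neq m_\tau(y)$ whenever $x\neq y$; by \cref{obs: topology}(\ref{obs:intersection}) this makes $\mathrm{Min}:=\{z\in X:m_\tau(z)=\{z\}\}$ exactly the set of elements whose minimal set is inclusion-minimal in $M(\tau)$. For each $x\in X$ set $S_x:=\{x\}\cup(\mathrm{Min}\cap m_\tau(x))$ and let $\tau':=\{U\subseteq X:S_x\subseteq U\text{ for all }x\in U\}$.

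First I would check that $\tau'$ is a finite topology with $m_{\tau'}(x)=S_x$ for all $x$. The family $\tau'$ contains $\emptyset$ and $X$ and is closed under arbitrary unions and intersections, so it is a topology; the one point to note is that $S_x\in\tau'$, which holds because every element of $S_x$ other than $x$ lies in $\mathrm{Min}$ and hence has singleton $S$-set. Since $S_x\in\tau'$, $x\in S_x$, and every member of $\tau'$ containing $x$ contains $S_x$, we get $m_{\tau'}(x)=S_x$.

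Next I would verify the two conclusions. For the height bound: if $v_1,v_2,v_3$ are distinct with $m_{\tau'}(v_1)\subseteq m_{\tau'}(v_2)\subseteq m_{\tau'}(v_3)$, then $v_1\in S_{v_2}$ and $v_2\in S_{v_3}$; since $v_1\neq v_2$ and $v_2\neq v_3$ this forces $v_1,v_2\in\mathrm{Min}$, whence $m_{\tau'}(v_1)=\{v_1\}$ and $m_{\tau'}(v_2)=\{v_2\}$, and now $\{v_1\}\subseteq\{v_2\}$ gives $v_1=v_2$, a contradiction; so $\tau'$ has height at most $2$. For the identity $E(G_2(\tau'))=E(G_2(\tau))$ on the common vertex set $X$: for distinct $x,y$ a short case check shows $S_x\cap S_y\neq\emptyset$ if and only if $\mathrm{Min}\cap m_\tau(x)\cap m_\tau(y)\neq\emptyset$, so all that remains is to show $m_\tau(x)\cap m_\tau(y)\neq\emptyset$ already implies $\mathrm{Min}\cap m_\tau(x)\cap m_\tau(y)\neq\emptyset$. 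Here I would pick $w\in m_\tau(x)\cap m_\tau(y)$ with $|m_\tau(w)|$ minimum; if $w\notin\mathrm{Min}$, choosing $w'\in m_\tau(w)$ with $w'\neq w$ and applying \cref{obs: topology}(\ref{obs:intersection}) together with the injectivity of $m_\tau$ yields $m_\tau(w')\subsetneq m_\tau(w)$ and $w'\in m_\tau(x)\cap m_\tau(y)$, contradicting the choice of $w$. Since $G_2(\tau')\cong G_2(\tau)$ and $\tau'$ has height at most $2$, this proves the lemma.

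The one genuinely load-bearing step is this last descending-chain argument: it is what guarantees that shrinking each minimal set $m_\tau(x)$ down to $S_x$ destroys no edge of $G_2(\tau)$, and it is exactly the place where finiteness of $X$ is used. Everything else—that $\tau'$ is a topology, that its minimal sets are the $S_x$, and that no new edges are created—is routine unwinding of the definitions.
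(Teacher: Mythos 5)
Your proof is correct, but it takes a genuinely different route from the paper's. The paper runs an extremal argument: it takes a minimal counterexample (fewest $3$-chains), picks one $3$-chain $(v_1,v_2,v_3)$, surgically deletes $v_2$ from every open set containing $v_3$, and checks that no $T_2$-adjacency is lost because any intersection witnessed by $v_2$ is re-witnessed by $v_1$. You instead give a one-shot global construction: after normalizing via \cref{lem:m(x)=m(y)} so that $m_\tau$ is injective, you collapse every minimal set $m_\tau(x)$ down to $S_x=\{x\}\cup(\mathrm{Min}\cap m_\tau(x))$, verify that the resulting family generates a topology of height $2$ with $m_{\tau'}(x)=S_x$, and reduce edge preservation to the claim that any nonempty intersection $m_\tau(x)\cap m_\tau(y)$ already contains an element of $\mathrm{Min}$ --- which your descending-chain argument (choose $w$ in the intersection with $|m_\tau(w)|$ minimum and apply \cref{obs: topology}(\ref{obs:intersection}) plus injectivity) establishes. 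All the steps check out; note that the injectivity of $m_\tau$ is genuinely load-bearing for you (without it the chain $m_\tau(w')\subseteq m_\tau(w)$ need not be strict), whereas the paper's surgery does not need that normalization. What your approach buys is an explicit closed-form description of the height-$2$ topology --- $L_1=\mathrm{Min}$ and $L_2$ the rest --- which is essentially the layered structure the paper later extracts in the proof of \cref{thm:G2-characterization}; what the paper's approach buys is a template that generalizes more directly to the height-$3$ reduction of \cref{lem: height3}, where a single flattening of this kind would be harder to write down.
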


\begin{proof}
Suppose the theorem is false and let $\tau$ be a finite topology on $X$ which is a minimum counterexample with respect to the height $h$ of $\tau$, subject to the number of distinct $3$-chains in $\tau$. If the height of $\tau$ is at most $2$ then we are done. Suppose the height of $\tau$ is at least $3$. Let $(v_1,v_2,v_3)$ be a $3$-chain in $\tau$.

We now form $\tau'$ from $\tau$ by simply removing $v_2$ from any set containing $v_3$. Thus, $(v_1,v_2,v_3)$ is no longer a $3$-chain in $\tau'$. Since the height of $\tau'$ is at most the height of $\tau$, what remains to show is that $G_2(\tau)\cong G_2(\tau')$, contradicting the choice of $\tau$. 
As sets only become more separated in $\tau'$, we must only check that for any $uv\in E(G_2(\tau))$, $uv\in E(G_2(\tau'))$. If $x\in m_\tau(u)\cap m_{\tau}(v)$ and $x\neq v_2$, then $x\in m_{\tau'}(u)\cap m_{\tau'}(v)$. If $v_2\in m_{\tau}(u)\cap m_{\tau}(v)$, then $m_{\tau}(v_2)\subseteq m_{\tau}(u)$ and $m_{\tau}(v_2)\subseteq m_{\tau}(v)$. Since $m_{\tau}(v_1)\subseteq m_{\tau}(v_2)$, by definition of our $3$-chain, $v_1\in m_{\tau}(u)\cap m_{\tau}(v)$ and hence $v_1\in m_{\tau'}(u)\cap m_{\tau'}(v)$. Thus, $uv\in E(G_2(\tau'))$.
\end{proof}

\begin{figure}[h!]
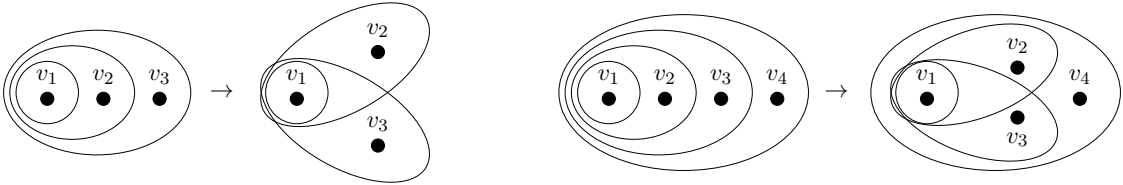

    \centering
    \includestandalone[width=0.9\textwidth]{figures/height-red}
    \caption{Height $2$ reduction in \cref{lem: height2} (left) and height $3$ reduction in \cref{lem: height3} (right)}
    \label{fig:height-2-red}
\end{figure}

\begin{lemma}\label{lem: height3}
    Let $i\in \{3,4\}$. Let $\tau$ be a finite topology on $X$. Then there exists a finite topology $\tau'$ on $X$ such that $G_i(\tau)\cong G_i(\tau')$ and the height of $\tau'$ is at most $3$.
\end{lemma}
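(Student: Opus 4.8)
The plan is to mimic the height-2 reduction in Lemma \ref{lem: height2}, but now the subtlety is that the graphs $G_{3'}(\tau)$, $G_{3''}(\tau)$, and $G_4(\tau)$ depend not only on the minimal sets $m_\tau(\cdot)$ but also on the sets $m_\tau(\ell(\cdot))$, so we cannot collapse chains as aggressively. Since $m_\tau(\ell(x))$ is built from the $m_\tau(w)$ for $w \in \ell_\tau(x)$ (by Observation \ref{obs: topology}(4)), and $w \in \ell_\tau(x)$ exactly when $m_\tau(x) \subseteq m_\tau(w)$, a chain of length $4$, say $m_\tau(v_1) \subseteq m_\tau(v_2) \subseteq m_\tau(v_3) \subseteq m_\tau(v_4)$, seems to be the minimum needed to witness all the relevant "two levels down / two levels up" interactions, which is why the target height is $3$ rather than $2$. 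So I would take $\tau$ to be a minimum counterexample with respect to the height $h \geq 4$ of $\tau$, and subject to that, with the fewest $4$-chains; fix a $4$-chain $(v_1, v_2, v_3, v_4)$ in $\tau$.

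Next I would form $\tau'$ from $\tau$ by a surgery analogous to the one in Lemma \ref{lem: height2}: remove the "middle" element of the chain that is causing the excess height from every set that contains the "top" element — concretely, delete $v_2$ from every open set containing $v_4$ (or, depending on how the bookkeeping works out, delete $v_3$ from every open set containing $v_4$). Then $(v_1,v_2,v_3,v_4)$ is no longer a $4$-chain in $\tau'$, and the height of $\tau'$ is at most the height of $\tau$ with strictly fewer $4$-chains, so it suffices to show $G_i(\tau) \cong G_i(\tau')$. As in the previous lemma, the surgery only separates sets further, so I need only verify that every edge $uv \in E(G_i(\tau))$ survives in $E(G_i(\tau'))$. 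This requires computing how $m_{\tau'}(w)$ and $m_{\tau'}(\ell(w))$ relate to their $\tau$-counterparts — the minimal sets change only for vertices $w$ with $m_\tau(w) = m_\tau(v_2)$ (they lose $v_2$), and $m_{\tau'}(\ell(w))$ changes only in controlled ways governed by whether $v_2 \in \ell_{\tau'}(w)$, paralleling Claim \ref{claim: min-sets}.

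The core of the argument is then a case analysis of the adjacency conditions. For the edge $uv$, suppose the witnessing nonempty intersection (say $m_\tau(u) \cap m_\tau(\ell(v)) \neq \emptyset$ for $i = 3$, or $m_\tau(\ell(u)) \cap m_\tau(\ell(v)) \neq \emptyset$ for $i=4$) is witnessed by some element $z$. If $z \neq v_2$ then $z$ still works in $\tau'$ after checking it remains in the relevant sets. If the intersection is witnessed \emph{only} by $v_2$, then I use the chain: $v_2 \in m_\tau(\cdot)$ forces $m_\tau(v_2) \subseteq m_\tau(\cdot)$ hence $v_1 \in m_\tau(\cdot)$ (and $v_1 \neq v_2$ survives the surgery); symmetrically, $v_2 \in m_\tau(\ell(\cdot))$ means some $w \in \ell_\tau(\cdot)$ has $v_2 \in m_\tau(w)$, hence $v_1 \in m_\tau(w) \subseteq m_\tau(\ell(\cdot))$, and $v_1$ persists. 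So $v_1$ serves as a replacement witness in $\tau'$. The same substitution handles the "or"/"and" bookkeeping in $G_{3'}$ versus $G_{3''}$ since it applies to each conjunct/disjunct separately.

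The main obstacle I anticipate is getting the surgery exactly right so that it simultaneously (a) strictly reduces the number of $4$-chains without increasing the height, and (b) does not accidentally destroy an edge of $G_i(\tau)$ whose only witness lives at the level being altered and for which no lower-level replacement exists. In particular, one must be careful that deleting $v_2$ from sets containing $v_4$ does not inadvertently change $m_\tau(\ell(w))$ for vertices $w$ outside the chain in a way that kills an edge — this is exactly the role of the analog of Claim \ref{claim: min-sets}, and verifying it for the $\ell(\cdot)$ sets (where an element of $\ell_{\tau'}(w)$ could have lost $v_2$ from its minimal set) is the delicate part of the proof.
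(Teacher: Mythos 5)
Your overall strategy is the paper's: take a minimum counterexample, break a $4$-chain $(v_1,v_2,v_3,v_4)$ by deleting $v_2$ from a carefully chosen family of open sets, and use $v_1$ as a replacement witness when the witness element is $v_2$ itself. But the step you yourself flag as "the delicate part" is exactly where your proposal breaks: neither of your two candidate surgeries is the right one, and both actually destroy edges. If you delete $v_2$ from every open set containing $v_4$, then $v_2\in m_{\tau'}(v_2)$ but $v_2\notin m_{\tau'}(v_4)$, so $v_4$ (and likewise $v_3$) drops out of $\ell_{\tau'}(v_2)$ and $m_{\tau'}(\ell(v_2))$ collapses. Concretely, take $X=\{v_1,v_2,v_3,v_4,y\}$ with minimal base $m(v_1)=\{v_1\}$, $m(v_2)=\{v_1,v_2\}$, $m(v_3)=\{v_1,v_2,v_3\}$, $m(v_4)=\{v_1,v_2,v_3,v_4,y\}$, $m(y)=\{y\}$. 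Here $v_2y\in E(G_{3'}(\tau))$ because $m_\tau(\ell(v_2))\supseteq m_\tau(v_4)\ni y$; the only way to reach $y$ from $v_2$ is through $v_4\in\ell_\tau(v_2)$. After your surgery $\ell_{\tau'}(v_2)=\{v_2\}$, so $m_{\tau'}(\ell(v_2))\cap m_{\tau'}(y)=\emptyset$ and the edge is gone (the $\Tthreeand$ condition fails). Your $v_1$-substitution does not help, because the failure is not that the witness element equals $v_2$; it is that the member $w$ of $\ell_\tau(\cdot)$ carrying the witness is no longer in $\ell_{\tau'}(\cdot)$, so $v_1\in m_{\tau'}(w)$ is irrelevant. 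The alternative surgery (delete $v_3$ from sets containing $v_4$) fails the same way with the roles shifted.

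The paper's surgery is the asymmetric one: delete $v_2$ from every open set that contains $v_3$ \emph{but not} $v_4$. This still breaks the chain (now $m_{\tau'}(v_2)\not\subseteq m_{\tau'}(v_3)$), but since every set containing $v_4$ is untouched we keep $m_{\tau'}(v_2)\subseteq m_{\tau'}(v_4)$, i.e.\ $v_4\in\ell_{\tau'}(v_2)$. Then the two bad cases are repaired separately: if the witness element is $v_2$ itself, replace it by $v_1$ (as you do); if an element of $\ell_\tau(u)$ containing the witness $x$ in its minimal set is lost, climb up to $v_4$, which is still in $\ell_{\tau'}(u)$ and still has $x\in m_{\tau}(v_2)\subseteq m_{\tau}(v_4)=m_{\tau'}(v_4)$. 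That second repair is the idea missing from your write-up, and without it the proof does not go through.
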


\begin{proof}
Suppose the theorem is false and let $\tau$ be a finite topology on $X$ which is a minimum counterexample with respect to the height $h$ of $\tau$ and subject to that, the number of distinct $4$-chains in $\tau$. If the height of $\tau$ is at most $3$ then we are done so suppose the height of $\tau$ is at least $4$. Let $(v_1,v_2,v_3,v_4)$ be a $4$-chain in $\tau$.

We now form $\tau'$ from $\tau$ by removing $v_2$ from every set which contains $v_3$ but not $v_4$. Thus, $(v_1,v_2,v_3,v_4)$ is no longer a $4$-chain in $\tau'$. Since the height of $\tau'$ is at most the height of $\tau$, what remains to show is that $G_i(\tau)\cong G_i(\tau')$, contradicting the choice of $\tau$.  Certainly if $u$ and $v$ are $T_i$-separated in $\tau$, they are still $T_i$-separated in $\tau'$, thus, what remains to show is that if $u$ and $v$ are $T_i$-adjacent in $\tau$, they remain $T_i$-adjacent in $\tau'$. Let $u$ and $v$ be elements of $X$ which are $T_i$-adjacent. Without loss of generality, let
\[ x\in \begin{cases}
    m_{\tau}(\ell(u))\cap m_{\tau}(v
) &\text{ if $i = 3$}\\
m_{\tau}(\ell(u))\cap m_{\tau}(\ell(v)
) &\text{ if $i = 4$}.\\
\end{cases}\]

The only way for $u$ and $v$ to be $T_i$-separated in $\tau'$ is if $x = v_2$ or if $v_2\in \ell_{\tau}(u)$ and $x\in m_{\tau}(v_2)$. In the first case, since $m_{\tau'}(v_1)\subseteq m_{\tau'}(v_2)$, $v_1\in m_{\tau'}(\ell(u))\cap m_{\tau'}(v)$ (and likewise for $i=4$). In the second case, since $m_{\tau'}(v_2)\subseteq m_{\tau'}(v_4)$, $v_4\in \ell_{\tau'}(u)$ and since $x\in m_{\tau}(v_2)$, $x\in m_{\tau}(v_4)$. Thus, $x\in m_{\tau'}(\ell(u))\cap m_{\tau'}(v)$ (and likewise for $i = 4$).
\end{proof}

\section{Proof of \cref{thm:poset-characterization}}\label{sec: comp proof}
Before proving \cref{thm:poset-characterization}, we prove two useful lemmas.

\begin{lemma}\label{lem:sub-topology}
Let $\tau$ be a finite topology on $X$ and let $i\in \{0,1,2,3,4\}$. Let $\tau_S$ be the induced subspace topology of $\tau$ generated by $S\subseteq X$. If $u,v\in S$ are $T_i$-adjacent in $\tau_S$, they are $T_i$-adjacent in $\tau$.
\end{lemma}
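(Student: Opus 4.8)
The plan is to reduce every case to two facts about how minimal sets transform when one passes to a subspace, after which each of the six adjacency conditions --- being built purely from containments and nonempty intersections of minimal sets --- transfers for free.

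First I would record that for $x\in S$,
\[
m_{\tau_S}(x)=\bigcap_{U\in\tau:\,x\in U}(S\cap U)=S\cap m_{\tau}(x),
\]
since $x\in S$ means $x\in S\cap U$ exactly when $x\in U$; in particular $m_{\tau_S}(x)\subseteq m_{\tau}(x)$ while still $x\in m_{\tau_S}(x)$. The corresponding statement for $\ell$ is the one place that needs care, because $\ell_{\tau}$ is defined via complements, and I would route around this with \cref{obs: topology}(\ref{obs: ell intersection}): for $y\in S$ and $x\in\ell_{\tau_S}(y)$ we have $m_{\tau_S}(y)\subseteq m_{\tau_S}(x)$, hence $y\in m_{\tau_S}(y)\subseteq m_{\tau_S}(x)\subseteq m_{\tau}(x)$, so $m_{\tau}(y)\subseteq m_{\tau}(x)$ by \cref{obs: topology}(\ref{obs:intersection}), i.e.\ $x\in\ell_{\tau}(y)$; thus $\ell_{\tau_S}(y)\subseteq\ell_{\tau}(y)$. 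Since $m_{\tau}(\ell(y))\in\tau$, the set $S\cap m_{\tau}(\ell(y))$ lies in $\tau_S$ and contains $\ell_{\tau_S}(y)$, so $m_{\tau_S}(\ell(y))\subseteq S\cap m_{\tau}(\ell(y))\subseteq m_{\tau}(\ell(y))$.

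Given these two containments the casework is short. If $i=0$, then $T_0$-adjacency of $u,v$ in $\tau_S$ gives $v\in m_{\tau_S}(u)\subseteq m_{\tau}(u)$ and $u\in m_{\tau_S}(v)\subseteq m_{\tau}(v)$, so $m_{\tau}(u)=m_{\tau}(v)$ by \cref{obs: topology}(\ref{obs:intersection}). If $i=1$ and, say, $m_{\tau_S}(u)\subseteq m_{\tau_S}(v)$, then $u\in m_{\tau_S}(v)\subseteq m_{\tau}(v)$, so $m_{\tau}(u)\subseteq m_{\tau}(v)$ again by \cref{obs: topology}(\ref{obs:intersection}). For $i\in\{2,3',3'',4\}$, every clause of $T_i$-adjacency in $\tau_S$ asserts that an intersection of two sets --- each of which is one of $m_{\tau_S}(u)$, $m_{\tau_S}(v)$, $m_{\tau_S}(\ell(u))$, $m_{\tau_S}(\ell(v))$ --- is nonempty; replacing each factor by the corresponding set built from $\tau$ only enlarges it, so the intersection stays nonempty and the same clause holds over $\tau$. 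In every case this is precisely $T_i$-adjacency of $u,v$ in $\tau$.

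The only genuinely non-mechanical step is the behavior of $\ell_{\tau}$ under subspaces, which is absorbed by the reduction to minimal sets in the second paragraph; the rest is bookkeeping. I would therefore present the two containments above as a short preliminary and then run the six cases.
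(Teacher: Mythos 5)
Your proof is correct and follows the same basic strategy as the paper's: reduce each adjacency condition to statements about minimal sets and check that containments and nonempty intersections persist when passing from $\tau_S$ to $\tau$. The one substantive difference is that you explicitly isolate and prove the identity $m_{\tau_S}(x)=S\cap m_{\tau}(x)$ and, more importantly, the containments $\ell_{\tau_S}(y)\subseteq\ell_{\tau}(y)$ and $m_{\tau_S}(\ell(y))\subseteq m_{\tau}(\ell(y))$; the paper's argument for $i\geq 2$ only says that intersecting open sets of $\tau_S$ still intersect in $\tau$, which silently elides the fact that for $i\in\{3,4\}$ the relevant sets are built from $\ell_{\tau_S}$, a complement-based construction that is not a priori the restriction of $\ell_{\tau}$. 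Your routing of this point through \cref{obs: topology}(\ref{obs:intersection}) and \cref{obs: topology}(\ref{obs: ell intersection}) is valid and makes the $i\in\{3,4\}$ cases fully rigorous, so your write-up is, if anything, more complete than the one in the paper.
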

\begin{proof}
Suppose $u$ and $v$ are $T_i$-adjacent in $\tau_
S$. First, let $i = 0$ and suppose for contradiction that $m_\tau(u) \not= m_\tau(v)$. Then there exists $a,b \in X \setminus S$ with $a \in m_\tau(u) \setminus m_\tau(v)$ and $b \in m_\tau(v) \setminus m_\tau(u)$. But then $u,v \in m_\tau(u) \cap m_\tau(v)$ and $a,b \not\in m_\tau(u) \cap m_\tau(v)$, contradicting the assumption that $m_\tau(u)$ and $m_\tau(v)$ are minimal sets. 

Similarly, suppose $i =1$ and without loss of generality let $m_{\tau_S}(u)\subseteq m_{\tau_S}(v)$. Suppose for contradiction that $m_{\tau}(u)
\not \subseteq m_{\tau}(v)$. Then there exists some $w\in m_{\tau}(u)\setminus m_{\tau}(v).$ But then $m_{\tau}(u)\cap m_{\tau}(v)$ contains $u$ but not $w$ which contradicts the minimality of $m_{\tau}(u)$.

If $i\geq 2$, then elements being $T_i$-adjacent in $\tau_S$ corresponds to two sets $A$ and $B$ in
$\tau_S$ intersecting. By adding back in the elements $X\setminus S$, open sets which intersect in $\tau_S$ still intersect in $\tau$. This implies that $u$ and $v$ are $T_i$-adjacent in $\tau$ too.
\end{proof}

\begin{lemma}\label{lem: sep to dist}
Let $\tau$ be a finite topology on X and let $H = G_1(\tau)$. If $u,v\in X$ are $T_i$-adjacent for $i\in \{1,2,3,4\}$, then $\text{dist}_H(u,v)\leq i$.
\end{lemma}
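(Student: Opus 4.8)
The plan is to first record two elementary distance estimates in $H = G_1(\tau)$ that follow directly from \cref{obs: topology}, and then feed the hypothesis of each case into a short triangle-inequality argument. Recall that, by definition, $uv \in E(H)$ precisely when $u$ and $v$ are $T_1$-adjacent, i.e.\ when $m_{\tau}(u)\subseteq m_{\tau}(v)$ or $m_{\tau}(v)\subseteq m_{\tau}(u)$. The two estimates are: (a) if $z\in m_{\tau}(x)$ then $\text{dist}_H(x,z)\leq 1$; and (b) if $z\in m_{\tau}(\ell(x))$ then $\text{dist}_H(x,z)\leq 2$. For (a): if $z=x$ there is nothing to prove, and otherwise \cref{obs: topology}(\ref{obs:intersection}) gives $m_{\tau}(z)\subseteq m_{\tau}(x)$, so $z$ and $x$ are $T_1$-adjacent and $xz\in E(H)$. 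For (b): since $\bigcup_{a\in \ell_{\tau}(x)} m_{\tau}(a)$ is an open set containing $\ell(x)$, one has $m_{\tau}(\ell(x)) = \bigcup_{a\in \ell_{\tau}(x)} m_{\tau}(a)$ (this identity is also established in the proof of \cref{obs: topology}(\ref{obs: ell must exist})), so there is some $a\in \ell_{\tau}(x)$ with $z\in m_{\tau}(a)$; by \cref{obs: topology}(\ref{obs: ell intersection}), $a\in \ell_{\tau}(x)$ means $m_{\tau}(x)\subseteq m_{\tau}(a)$, hence $\text{dist}_H(x,a)\leq 1$, while applying (a) to the pair $a,z$ gives $\text{dist}_H(a,z)\leq 1$; combining, $\text{dist}_H(x,z)\leq 2$.

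With (a) and (b) in hand, I would split into cases on $i$. If $i=1$, then $u,v$ are $T_1$-adjacent, so $uv\in E(H)$ (or $u=v$) and $\text{dist}_H(u,v)\leq 1$. If $i=2$, pick $z\in m_{\tau}(u)\cap m_{\tau}(v)$; then (a) gives $\text{dist}_H(u,z)\leq 1$ and $\text{dist}_H(z,v)\leq 1$, so $\text{dist}_H(u,v)\leq 2$. If $i=3$ (either variant), then at least one of $m_{\tau}(u)\cap m_{\tau}(\ell(v))\neq \emptyset$ and $m_{\tau}(\ell(u))\cap m_{\tau}(v)\neq \emptyset$ holds---both for $3'$, at least one for $3''$---so after possibly swapping the roles of $u$ and $v$ we may assume $m_{\tau}(\ell(u))\cap m_{\tau}(v)\neq \emptyset$; choosing $z$ in this intersection, (b) gives $\text{dist}_H(u,z)\leq 2$ and (a) gives $\text{dist}_H(z,v)\leq 1$, so $\text{dist}_H(u,v)\leq 3$. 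Finally, if $i=4$, pick $z\in m_{\tau}(\ell(u))\cap m_{\tau}(\ell(v))$; then (b) gives $\text{dist}_H(u,z)\leq 2$ and $\text{dist}_H(z,v)\leq 2$, so $\text{dist}_H(u,v)\leq 4$.

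No separate treatment of degenerate configurations is needed: the witnessing vertex $z$ (or the auxiliary vertex $a$ in the proof of (b)) may coincide with $u$, $v$, or each other, but since (a) and (b) are non-strict inequalities and the triangle inequality $\text{dist}_H(u,v)\leq \text{dist}_H(u,z)+\text{dist}_H(z,v)$ still applies, the stated bound is unaffected. The only step requiring any real care is estimate (b)---in particular the identity $m_{\tau}(\ell(x)) = \bigcup_{a\in \ell_{\tau}(x)} m_{\tau}(a)$, which is what lets one replace the possibly non-minimal set $m_{\tau}(\ell(x))$ by a genuine minimal set $m_{\tau}(a)$ with $a$ adjacent (or equal) to $x$ in $H$. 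Since this identity is already part of the proof of \cref{obs: topology}(\ref{obs: ell must exist}), I expect the entire argument to be short, with the per-case verification and the triangle inequality doing the rest.
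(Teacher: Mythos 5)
Your proof is correct and follows essentially the same route as the paper: both rely on \cref{obs: topology}(\ref{obs:intersection}), (\ref{obs: ell intersection}), and the decomposition $m_{\tau}(\ell(x)) = \bigcup_{a\in \ell_{\tau}(x)} m_{\tau}(a)$ from the proof of \cref{obs: topology}(\ref{obs: ell must exist}) to reduce each case to short paths in $H$. Your packaging of the argument into the two reusable estimates (a) and (b) plus the triangle inequality is a slightly cleaner organization of the same idea, not a different proof.
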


\begin{proof}
The claim is trivial when $i = 1$. Suppose $i = 2$. If $u$ and $v$ are $T_2$-adjacent, there exists $z\in m_{\tau}(u)\cap m_{\tau}(v)$. By \cref{obs: topology}(\ref{obs:intersection}) this means $z\in N_H(u)\cap N_H(v)$ and hence $\text{dist}_H(u,v)\leq 2$. Next, suppose $i =3$ ($3'$ or $3''$). If $u$ and $v$ are $T_3$-adjacent, then by \cref{obs: topology}(\ref{obs: ell must exist}) we may assume without loss of generality there exists some $z\in \ell_{\tau}(u)$ such that $z$ and $v$ are $T_2$-adjacent. By the previous case, $\text{dist}_H(z,v)\leq 2$. By \cref{obs: topology}(\ref{obs: ell intersection}), $z\in N_H(u)$ and thus, $\text{dist}_H(u,v)\leq 3$. Finally, suppose $i=4$. If $u$ and $v$ are $T_4$-adjacent, then by \cref{obs: topology}(\ref{obs: ell must exist}) there exists $z\in \ell_{\tau}(u)$ and $z'\in \ell_{\tau}(v)$ such $z$ and $z'$ are $T_2$-adjacent. By the case when $i=2$, $\text{dist}_H(z,z')\leq 2$. By \cref{obs: topology}(\ref{obs: ell intersection}), $z\in N_H(u)$ and $z'\in N_H(v)$ and thus, $\text{dist}_H(u,v)\leq 4$.
\end{proof}

\begin{remark}\label{rem:proof of sep to dist}
We note a couple of things with regard to \cref{lem: sep to dist}. The first is that the contrapositive of \cref{lem: sep to dist} will be applied frequently, which states that if $\text{dist}_H(u,v)>i$, then $u,v$ are $T_i$-separated. Secondly, in the proof of \cref{lem: sep to dist} we see that if $u$ and $v$ are $T_2$-adjacent and $\text{dist}_H(u,v)=2$ via the path $uzv$, then $m_{\tau}(z)\subseteq m_{\tau}(u)$ and $m_{\tau}(z)\subseteq (v)$. Similarly, if $u$ and $v$ are $T_4$-adjacent, and $\text{dist}_H(u,v)=4$ via the path $uzwz'v$, then $m_{\tau}(u)\subseteq m_{\tau}(z)$, $m_{\tau}(v)\subseteq m_{\tau}(z')$, $m_{\tau}(w)\subseteq m_{\tau}(z)$, and $m_{\tau}(w)\subseteq m_{\tau}(z')$.
\end{remark}

We now restate \cref{thm:poset-characterization} for convenience.

\begin{theorem*}[\ref{thm:poset-characterization}]
The following holds.
\begin{enumerate}
    \item A graph $G\in \G_0$ if and only if $G$ is a disjoint union of cliques.
    \item A graph $G\in \G_1$ if and only if $G$ is a comparability graph.
    \item A graph $G\in \G_2$ if and only if $G$ is an upper bound graph.
    \item A graph $G\in \Gthreeand$ if and only if $G$ is a half-closed upper bound graph.
    \item A graph $G\in \Gthreeor$ if and only if $G$ is a fully-closed upper bound graph.
    \item A graph $G\in \G_4$ if and only if $G$ is an extended-closed upper bound graph.
\end{enumerate}
\end{theorem*}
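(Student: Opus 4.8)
Part (1) is precisely \cref{prop:G0}, so the real work is in parts (2)--(6). The plan is to invoke the standard correspondence between finite topologies and finite preorders, restricted (via \cref{lem:m(x)=m(y)}) to the case of a partial order, and then to translate each notion of $T_i$-adjacency — which, as characterized in terms of minimal sets and by \cref{lem: sep = adj} together with \cref{def: Ti-sep}, governs the edges of $G_i(\tau)$ — verbatim into poset language. To a finite topology $\tau$ on $X$ with $m_\tau(x)\neq m_\tau(y)$ for all distinct $x,y$ (a representative available for every $G\in\G_i$, $i\in\{1,2,3,4\}$, by \cref{lem:m(x)=m(y)}) associate the poset $P_\tau=(X,\leq_\tau)$ defined by $x\leq_\tau y\iff m_\tau(y)\subseteq m_\tau(x)$; by \cref{obs: topology}(\ref{obs:intersection}) this is a genuine partial order, and $m_\tau(x)=\{z:x\leq_\tau z\}$. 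Conversely, to a poset $P=(X,\leq)$ associate the topology $\tau_P$ whose open sets are the up-sets of $P$; then $m_{\tau_P}(x)=\{z:x\leq z\}$, these are distinct for distinct $x$, and $P_{\tau_P}=P$. Thus $\tau\mapsto P_\tau$ and $P\mapsto\tau_P$ are mutually inverse, and the whole theorem reduces to a single identity.

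\emph{Core claim.} For every finite topology $\tau$ on $X$ with distinct minimal sets, the graphs $G_1(\tau)$, $G_2(\tau)$, $G_{3'}(\tau)$, $G_{3''}(\tau)$, $G_4(\tau)$ are, respectively, the comparability graph, the upper bound graph, the half-closed, the fully-closed, and the extended-closed upper bound graph of $P_\tau$. To prove it I would first record, using \cref{obs: topology}(\ref{obs: ell intersection}) and the set identity established inside the proof of \cref{obs: topology}(\ref{obs: ell must exist}), that $\ell_\tau(x)=\{w:w\leq_\tau x\}$, and hence $m_\tau(\ell(x))=\bigcup_{w\leq_\tau x}m_\tau(w)=\{z:\text{there is }w\text{ with }w\leq_\tau x\text{ and }w\leq_\tau z\}$. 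Everything after that is pure substitution: ``$m_\tau(x)\subseteq m_\tau(y)$ or $m_\tau(y)\subseteq m_\tau(x)$'' says $x,y$ are $\leq_\tau$-comparable; ``$m_\tau(x)\cap m_\tau(y)\neq\emptyset$'' says there is $z$ with $x\leq_\tau z$ and $y\leq_\tau z$, a common upper bound; plugging in the formula for $m_\tau(\ell(\cdot))$, the condition $m_\tau(\ell(x))\cap m_\tau(y)\neq\emptyset$ becomes ``there are $a,b$ with $a\leq_\tau x$, $a\leq_\tau b$, $y\leq_\tau b$'', symmetrically for $m_\tau(x)\cap m_\tau(\ell(y))\neq\emptyset$, and $m_\tau(\ell(x))\cap m_\tau(\ell(y))\neq\emptyset$ becomes ``there are $a,b,c$ with $a\leq_\tau x$, $b\leq_\tau y$, $a\leq_\tau c$, $b\leq_\tau c$''. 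Comparing these with the definitions of the comparability graph and of the half-, fully-, and extended-closed upper bound graphs from the introduction — the ``and''/``or'' distinguishing $T_{3'}$ from $T_{3''}$ passes through unchanged — yields the five equalities.

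Given the core claim, the theorem follows immediately. For the forward direction, if $G\in\G_i$ with $i\in\{1,2,3,4\}$, then by \cref{lem:m(x)=m(y)} there is a topology $\tau$ on $V(G)$ with distinct minimal sets and $G\cong G_i(\tau)$; the claim identifies $G_i(\tau)$ with the corresponding poset graph of $P_\tau$, so $G$ is of the asserted type. For the converse, if $G$ equals, say, the fully-closed upper bound graph of a poset $P$ on $V(G)$, then $P=P_{\tau_P}$ and $\tau_P$ has distinct minimal sets, so the claim gives $G_{3''}(\tau_P)=$ the fully-closed upper bound graph of $P=G$, whence $G\in\Gthreeor$; the remaining four cases are verbatim the same, and part (1) is \cref{prop:G0}.

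The one place that demands care is the core claim for $i\in\{3,4\}$: one must fix the orientation of $P_\tau$ consistently (so that each $m_\tau(\cdot)$ is an up-set) and correctly unwind the nested existential quantifiers that occur both in the formula for $m_\tau(\ell(x))$ and in the prose definitions of the three closed upper bound graphs. An orientation slip would replace each target class by its poset dual — harmless for the truth of the theorem, since the comparability class and all three closed upper bound classes are invariant under reversing $\leq$, but fatal to the exposition. Everything else is a direct unwinding of \cref{obs: topology}, \cref{def: sep-axioms}, and \cref{lem: sep = adj}.
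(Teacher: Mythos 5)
Your argument is correct, and it takes a genuinely different route from the paper. You work through the Alexandrov correspondence: a finite topology with distinct minimal sets is the same data as a partial order via $x\leq_\tau y\iff m_\tau(y)\subseteq m_\tau(x)$, under which $m_\tau(x)$ is the principal up-set, $\ell_\tau(x)$ is the principal down-set (\cref{obs: topology}(\ref{obs: ell intersection})), and $m_\tau(\ell(x))$ is the up-closure of the down-set (the identity $m_\tau(\ell(x))=\bigcup_{a\in\ell_\tau(x)}m_\tau(a)$ from the proof of \cref{obs: topology}(\ref{obs: ell must exist})); the five $T_i$-adjacency conditions then transcribe literally into the five poset definitions, and \cref{lem:m(x)=m(y)} plus the inverse construction $P\mapsto\tau_P$ give both directions. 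The paper instead proves (2) by routing through the Golumbic--Scheinerman containment/comparability theorem, and proves (3)--(6) by fixing the comparability graph $H=G_1(\tau)$ with its transitive orientation, classifying the transitive orientations of shortest $u,v$-paths of length $k\leq i$, computing minimal sets in the subspace topology on each path (\cref{lem:sub-topology}), and controlling which pairs can be adjacent via the distance bound \cref{lem: sep to dist}. Your translation is shorter and more self-contained — it needs neither \cref{thm: cont=comp} nor any path or distance analysis, and it makes visibly clear why the and/or split between $\Gthreeand$ and $\Gthreeor$ mirrors the half-/fully-closed definitions. The paper's approach, in exchange, produces an explicit description of $E(G_i(\tau))$ as $E(H)$ plus edges between endpoints of specific short oriented path types, a concrete picture that the authors lean on in \cref{rem:proof of sep to dist} and in the later structural sections on $\G_2$ and $\Gthreeand$. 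Your closing caveat about orientation conventions is well placed, and your chosen orientation (minimal sets as up-sets) is in fact the one consistent with the paper's definitions, so no dualization issue arises.
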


\begin{proof}
We have already shown (1) in \cref{prop:G0}. We now show (2). First, let $G\in \G_1$ and let $\tau$ be a finite topology on $X$ such that $G_i(\tau)\cong G$. By \cref{lem:m(x)=m(y)} we can assume that $m_{\tau}(x)\neq m_{\tau}(y)$ and thus each element of $X$ has a unique minimal set. Thus, $G$ is the containment graph using these minimal sets as $\Sigma$ and by \cref{thm: cont=comp}, $G$ is a comparability graph. Similarly, if $G$ is a comparability graph by \cref{thm: cont=comp}, it is also a containment graph. By mapping each element of $X$ to a minimal set with respect to the containment graph we obtain a finite topology $\tau$ such that $G\cong G_1(\tau)$. That is, $G\in \G_1$.

Now that the complete characterization is established for $\G_1$ and since $E(G_1(\tau))\subseteq E(G_i(\tau))$ for all $i\geq 1$, we show that for (3)-(6), $G_i(\tau)$ is in the hypothesized poset graph class where the poset we are referring to is the one in which $G_1(\tau)$ is the comparability graph of. Again by \cref{lem:m(x)=m(y)}, we may assume $m_{\tau}(x)\neq m_{\tau}(y)$ for all $x\neq y$. We let $O$ be the transitive orientation of $G_1(\tau)$ obtained by orienting $u\to v$ if and only if $m_{\tau}(v)\subset m_{\tau}(u)$.

For (3)-(6), we consider $u,v\in V(H)$ with $\text{dist}_H(u,v)= k$ for $k\geq 2$. We will always let $P = ux_1...x_{k-1}v$ be a shortest $u,v$-path in $H$. Since $P$ is an induced path by definition, there is either one or two ways to transitively orient $P$ depending on the parity of $k$ up to symmetry. If $k$ is even, there are two orientations and if $k$ is odd, there is only one. These orientations are the ones in which every vertex has either in-degree zero or out-degree zero alternating as depicted in \cref{fig:comp-paths}.

We let $\tau_P$ be the subspace topology of $\tau$ whose elements are $V(P)$ and whose sets are the ones obtained from $\tau$ by removing the elements which are not $V(P)$. By \cref{lem:sub-topology}, to prove that $u$ and $v$ are $T_i$-adjacent in $\tau$, it suffices to show that $u$ and $v$ are $T_i$-adjacent in $\tau_P$. Since $\tau_P$ follows a specific structure (shown in \cref{fig:comp-paths}), we can explicitly describe $m_{\tau_P}$ for all of $V(P)$. Using the labeling (i.)-(v.) as in \cref{fig:comp-paths}, we have the following information of the minimal sets for $V(P)$. 

\textbf{Minimal sets when $k=2$.} In (i.) $u\leftarrow x_1\to v$: $m_{\tau_P}(u) = \{u\}$, $m_{\tau_P}(x_1) = \{u,x_1,v\}$, and $m_{\tau_P}(v) = \{v\}$. Thus, $m_{\tau_P}(u)\cap m_{\tau_P}(v)\neq \emptyset$, and $u$ and $v$ are $T_2$-separated in $\tau_P$. Since $m_{\tau_P}(\ell(u))=m_{\tau_P}(\ell(v)) = \{u,x_1,v\}$, we have $m_{\tau_P}(\ell(u))\cap m_{\tau_P}(v)  \neq \emptyset$ and vice versa, implying that $u$ and $v$ are $\Tthreeand$-adjacent in $\tau_P$. In (ii.) $u\to x_1 \leftarrow v$: $m_{\tau_P}(u) = \{u,x_1\}$, $m_{\tau_P}(x_1) = \{x_1\}$, and $m_{\tau_P}(v) = \{x_1,v\}$. Hence, $u$ and $v$ are $T_2$-adjacent in $\tau_P$.

\textbf{Minimal sets when $k=3$.} In the only transitive orientation up to symmetry, (ii.) $u \to x_1 \leftarrow x_2 \to v$, we have the following minimal sets. $m_{\tau_P}(u)=\{u,x_1\}$, $m_{\tau_P}(x_1) = \{x_1\}$, $m_{\tau_P}(x_2) = \{x_1,x_2,v\}$, and $m_{\tau_P}(v)=\{v\}$. Since $m_{\tau_P}(\ell(v))=\{x_1,x_2,v\}$, we have $m_{\tau_P}(u)\cap m_{\tau_P}(\ell(v))\neq \emptyset$ implying that $u$ and $v$ are $\Tthreeor$-adjacent in $\tau_P$. Note that if there was also a $v,u$-path (oriented like (ii)), then $u$ and $v$ would be $\Tthreeand$-adjacent in $\tau_P$.

\textbf{Minimal sets when $k=4$.} In (iv.) $u\leftarrow x_1 \to x_2 \leftarrow x_3 \to v$: the minimal sets are as follows. $m_{\tau_P}(u) = \{u\}$, $m_{\tau_P}(x_1) = \{u,x_1,x_2\}$, $m_{\tau_P}(x_2)=\{x_2\}$, $m_{\tau_P}(x_3) = \{x_2,x_3,v\}$, and $m_{\tau_P}(v) = \{v\}$. Since $m_{\tau_P}(\ell(u)) = \{u,x_1,x_2\}$, and $m_{\tau_P}(\ell(v)) = \{x_2,x_3,v\}$ we see that $m_{\tau_P}(\ell(u))\cap m_{\tau}(v)=\emptyset$, $m_{\tau_P}(u)\cap m_{\tau_P}(\ell(v))=\emptyset$ but $m_{\tau_P}(\ell(u)\cap m_{\tau_P}(\ell(v))\neq \emptyset$ implying that in $\tau_P$, $u$ and $v$ are $\Tthreeor$-separated but $T_4$-adjacent. In (v.) $u\to x_1 \leftarrow x_2 \to x_3 \leftarrow v$: the minimal sets are as follows. $m_{\tau_P}(u) = \{u,x_1\}$, $m_{\tau_P}(x_1) = \{x_1\}$,
$m_{\tau_P}(x_2) = \{x_1,x_2,x_3\}$,
$m_{\tau_P}(x_3) = \{x_3\}$, and
$m_{\tau_P}(v) = \{x_3,v\}$. Since $m_{\tau_P}(\ell(u)) = \{u,x_1\}$ and $m_{\tau_P}(\ell(v))=\{x_3,v\}$, $m_{\tau_P}(\ell(u))\cap m_{\tau_P}(\ell(v))=\emptyset$ implying that $u$ and $v$ are $T_4$-separated.

We are now ready to prove (3)-(6). By the contrapositive of \cref{lem: sep to dist}, to obtain $E(G_i(\tau))$ we need only to consider the case when $\text{dist}_H(u,v)=k$ for $k\leq i$. First, consider $i=2$. If $u$ and $v$ are along a path of type (i.) implying they are $T_2$-separated in $\tau_P$ but are $T_2$-adjacent in $\tau$, then by \cref{rem:proof of sep to dist} there exists a path of type (ii.) from $u$ to $v$ also. This means $E(G_2(\tau))$ can be obtained from $E(H)$ by adding edges between $u$ and $v$ if and only if there exists $z\in N^+_O[u]\cap N^+_O[v]$. Since $O$ is the transitive orientation corresponding to a poset $P=(V(H),\leq)$, where $H$ is the comparability graph of $P$, then there exists vertices in $G$ with $u\to w$ and $v\to z$ if and only if $u\leq w$ and $v\leq z$ and hence $u$ and $v$ have a common upper bound. This implies that $G_2(\tau)$ is the upper bound graph of $P$. This proves (3).

Next consider $i = 3'$. If $\text{dist}_H(u,v)\leq 2$, then we see that $u$ and $v$ are $\Tthreeand$-adjacent in $\tau$. If $\text{dist}_H(u,v) = 3$ and $u,v$ are $\Tthreeand$-adjacent, then $H$ must contain a $u,v$-path and a $v,u$-path of type (iii). This means $E(G_{3'}(\tau))$ can be obtained from $E(H)$ by adding edges between $u$ and $v$ if and only if there exists some $a,b,\alpha,\beta\in V(H)$ and path in the transitive orientation $O$ of $H$ such that $u\leftarrow a \to b \leftarrow v$ and $u \to \alpha, \leftarrow \beta \to v$. Since $O$ is the transitive orientation corresponding to the poset $P = (V(H), \leq)$, we have $a\leq u$, $a\leq b, v\leq b$, and $\alpha\leq v$, $\alpha \leq \beta$, $u\leq \beta$. Thus, $u$ and $v$ satisfy the definition of adjacency in the half-closed upper bound graph of $P$. This proves (4).

Similarly, since $u,v$ are $\Tthreeor$-adjacent for all paths of length at least $3$, $E(G_3(\tau))$ can be obtained from $E(H)$ by adding edges between $u$ and $v$ if $\text{dist}_H(u,v)\leq 3$, i.e. $E(G_3(\tau)) = E(H^3)$. Since $O$ is the transitive orientation corresponding to a poset $P=(V(H),\leq)$, where $H$ is the comparability graph of $P$, then there exists vertices of $G$ with either $u\leftarrow a \to b \leftarrow v$ or $u \to \alpha, \leftarrow \beta \to v$  if and only if either $a \leq u$, $a\leq b$, and $v\leq b$, or $\alpha \leq v$, $\alpha\leq \beta$, and $u\leq \beta$, Thus, $u$ and $v$ satisfy the definition of adjacency in the fully-closed upper bound graph of $P$, which proves (5).

Finally, consider $i=4$. If $u,v$ are endpoints of a path of type (v.), implying they are $T_4$-separated in $\tau_P$, but $u$ and $v$ are $T_4$-adjacent in $\tau$, then by \cref{rem:proof of sep to dist} there exists a $u,v$ path of type (iv.) too. Thus, $E(G_4(\tau))$ can be obtained from $E(H)$ by adding edges between $u$ and $v$ if and only if there exists $a\in N^-_O[u]$ and $b\in N^-_O[v]$ such that $N^+_O[a]\cap N^+_O[b]\neq \emptyset$. Let $c\in N^+_O[a]\cap N^+_O[b]$. Since $O$ is the transitive orientation corresponding to a poset $P=(V(H),\leq)$, where $H$ is the comparability graph of $P$, then there exists vertices of $G$ with $u \leftarrow a\to c$ and $v \leftarrow b\to c$ if and only if $a\leq u$, $b\leq v$, $a\leq c$, and $b\leq c$ and hence $u$ and $v$ satisfy the definition of adjacency in the extended-closed upper bound graph of $P$. This proves (6) and completes the proof.
\end{proof}

\begin{figure}[h!]
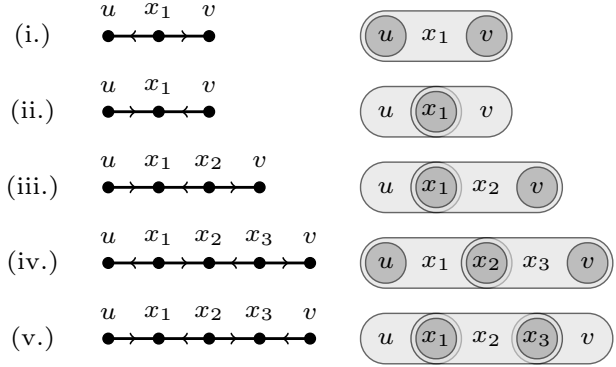

    \centering
    \includestandalone[width=0.5\textwidth]{figures/comp-paths}
    \caption{Paths considered in \cref{thm:poset-characterization} with transitive orientation (left) and visual interpretation of the minimal base of associated sub-topology $\tau_P$ (right)}
    \label{fig:comp-paths}
\end{figure}

\section{A closer look into $\G_2$}\label{sec:More with T2}

Unlike $\G_0$ and $\G_1$, the graph class $\G_2$ is not hereditary. A hereditary graph class is one which is closed under taking induced subgraphs. To see that $\G_2$ is not hereditary, consider any graph $H$. We define the \textit{incidence topology on $V(H)\cup E(H)$}, denoted $\text{Inc}(H)$, whose minimal base is $M(\text{Inc}(H)) = \bigcup_{e\in E(H)} \{e\} \cup \bigcup_{v\in V(H)} \{v,\{e\in E(G) : e \text{ is incident to $v$}\}\}$. Note that the graph $G_2(\text{Inc}(H))$ can be obtained from $G$ by duplicating each edge of $E(H)$, and then subdividing one of the parallel edges from each pair. Hence, the edges that are not subdivided induce a copy of $H$.

In \cref{thm:poset-characterization} we proved that graphs in $\G_2$ are equivalent to upper bound graphs. Thus, by \cref{thm:upperbound}, graphs in $\G_2$ are also equivalent to edge-simplicial graphs. In order to understand the connections between the finite topologies and the clique cover guaranteed in edge-simplicial graphs, we present a proof showing directly that graphs in $\G_2$ have the desired clique cover for edge-simplicial graphs. This will also provide us a template for characterizing graphs in $\Gthreeand$ in \cref{sec:G3and}.

\begin{theorem}\label{thm:G2-characterization}
A graph $G\in \G_2$ if and only if $G$ contains a clique cover with the property that every clique contains a simplicial vertex.
\end{theorem}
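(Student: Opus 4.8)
The statement is in fact immediate from \cref{thm:poset-characterization}(3) together with \cref{thm:upperbound}: the former identifies $\G_2$ with the class of upper bound graphs, and the latter identifies that class with the edge‑simplicial graphs, which is exactly the stated condition. Nonetheless, because this equivalence is meant to serve as the template for the treatment of $\Gthreeand$ in \cref{sec:G3and}, the plan is to give a self‑contained proof that reads off the clique cover directly from a topology, and conversely builds a topology from such a cover.

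For the forward direction, write $G=G_2(\tau)$ and recall from \cref{lem: sep = adj} that $u,v$ are adjacent in $G_2(\tau)$ exactly when $m_\tau(u)\cap m_\tau(v)\neq\emptyset$. Call $v\in V(G)$ a \emph{sink} if $m_\tau(v)$ is inclusion‑minimal among all minimal sets of $\tau$; sinks exist because $\tau$ is finite. By \cref{obs: topology}(\ref{obs:intersection}), if $v$ is a sink and $w\in m_\tau(v)$ then $m_\tau(w)\subseteq m_\tau(v)$, so by minimality $m_\tau(w)=m_\tau(v)$ and $w$ is a sink too. For each distinct minimal set $M$ of a sink, put $C_M=\{w\in V(G):M\subseteq m_\tau(w)\}$. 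Then (i) $C_M$ is a clique, since any two of its vertices contain the nonempty set $M$ in the intersection of their minimal sets; (ii) for any sink $b$ with $m_\tau(b)=M$ one checks $N[b]=C_M$ (if $w\sim b$, pick $z\in m_\tau(w)\cap m_\tau(b)$; then $z$ is a sink with $m_\tau(z)=M$, so $M\subseteq m_\tau(w)$), whence $b$ is a simplicial vertex of $C_M$; and (iii) the $C_M$ cover $E(G)$, since given an edge $uv$ one picks $z\in m_\tau(u)\cap m_\tau(v)$ and then (finiteness) a sink $b$ with $m_\tau(b)\subseteq m_\tau(z)\subseteq m_\tau(u)\cap m_\tau(v)$, so $u,v\in C_{m_\tau(b)}$. (Invoking \cref{lem: height2} to first reduce to height at most $2$ makes this picture especially transparent but is not needed.)

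For the converse, suppose $G$ has an edge clique cover $C_1,\dots,C_k$ with $s_j$ a simplicial vertex of $C_j$. Since $C_j\subseteq N[s_j]$ and $N[s_j]$ is a clique, we may replace each $C_j$ by $N[s_j]$ and assume the cover is $\{N[s]:s\in S'\}$ with $S'$ a set of simplicial vertices. A one‑line argument shows that two \emph{adjacent} simplicial vertices have equal closed neighbourhoods (if $s\sim t$, the clique $N[s]$ contains $t$, so $N[s]\subseteq N[t]$, and symmetrically); hence, picking one representative of each class of the relation $N[s]=N[t]$ that meets $S'$ yields an \emph{independent} set $S$ of simplicial vertices with $\{N[s]:s\in S\}$ still an edge cover of $G$. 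Now define minimal sets on $V(G)$ by $m_\tau(v)=\{v\}$ for $v\in S$ and $m_\tau(v)=\{v\}\cup(N(v)\cap S)$ for $v\notin S$; since $w\in m_\tau(v)\setminus\{v\}$ forces $w\in S$ and hence $m_\tau(w)=\{w\}\subseteq m_\tau(v)$, this collection is the minimal base of a genuine finite topology $\tau$ on $V(G)$. A short case check — splitting on whether $u,v$ lie in $S$ and on which $N[s]$ ($s\in S$) contains the edge $uv$, and using independence of $S$ to rule out the bad case — shows $m_\tau(u)\cap m_\tau(v)\neq\emptyset$ iff $u\sim_G v$, i.e.\ $G_2(\tau)\cong G$.

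The verifications (i)–(iii) and the final case analysis are routine. The one genuinely delicate point is the extraction of $S$ in the converse: the naive choice of $S$ equal to \emph{all} simplicial vertices with $m_\tau(v)=\{v\}$ fails precisely when $G$ has an edge between two simplicial vertices (already for $K_2$ or $K_3$), since those vertices would get disjoint singleton minimal sets and become non‑adjacent in $G_2(\tau)$. The observation that adjacent simplicial vertices share a closed neighbourhood is exactly what permits passing to an independent transversal and repairing this, and I expect setting up that reduction cleanly to be the crux of the write‑up.
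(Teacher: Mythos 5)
Your proof is correct, and while it follows the same broad strategy as the paper (cliques are closed neighbourhoods of vertices with smallest minimal sets; conversely, build a two--layer minimal base from the cover), it differs in two ways worth recording. In the forward direction the paper first invokes \cref{lem:m(x)=m(y)} and \cref{lem: height2} to normalise $\tau$ to height $2$ with distinct minimal sets and then takes $L_1$ to be the bottom layer; you instead work with an arbitrary $\tau$ and take as distinguished vertices the ``sinks'' (inclusion--minimal minimal sets), verifying $N[b]=C_M$ directly from \cref{obs: topology}(\ref{obs:intersection}). This is marginally more self--contained, at the cost of redoing in situ what the reduction lemmas package once and for all. The more substantive difference is in the converse: the paper simply sets $m_\tau(v_i)=\{v_i\}$ for the chosen simplicial representatives $v_1,\dots,v_t$ and asserts $G_2(\tau)\cong G$, which as written fails when two chosen representatives are adjacent (e.g.\ $K_3$ covered by its three edges, each vertex chosen as the simplicial vertex of its edge: all three minimal sets become disjoint singletons and $G_2(\tau)$ is edgeless). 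Your observation that adjacent simplicial vertices have equal closed neighbourhoods, and the resulting passage to an independent transversal $S$ before defining the minimal base, is exactly the repair this step needs; the paper's argument is implicitly relying on first normalising the cover to consist of distinct maximal cliques, where the same fact makes the chosen representatives automatically independent. So your write--up is not only valid but closes a small gap in the published converse.
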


\begin{proof}
We first show that if $G\in \G_2$ then it has the desired clique cover. Since $G\in \G_2$, there exists a topology $\tau$ on $V(G)$ such that $G\cong G_2(\tau)$. By \cref{lem:m(x)=m(y)} and \cref{lem: height2}, we may assume $\tau$ has height $2$ and for any $x,y\in V(G)$, $m_{\tau}(x)\neq m_{\tau}(y)$. Since $\tau$ has height $2$, we let $L_1\cup L_2$ be a partition of $V(G)$, into two layers, such that $m_{\tau}(v) = \{v\}$ for all $v\in L_1$ and $m_{\tau}(u)\setminus \{u\} \subseteq L_1$ for all $u\in L_2$.

\begin{claim}
For any $v\in L_1$, $N[v]$ induces a clique.
\end{claim}
\begin{proofc}
Let $v\in L_1$. By definition $m_{\tau}(x)= \{x\}$ for any $x\in L_1$ and hence $N(v)\subseteq L_2$. Let $u\in N(v)\cap L_2$. Then the only way for $u$ to be in $N(v)$ is if $v\in m_{\tau}(u)$. Since this holds for any $u \in  N(v)$, it must be the case that $v\in \bigcap_{u\in N(v)} m_{\tau}(u)$. Thus, $N[v]$ induces a clique.
\end{proofc}

Now consider $\C = N[v_1],N[v_2],...,N[v_t]$ where $L_1 = \{v_1,...,v_t\}$. We claim that $\C$ is a clique cover and moreover by the previous claim each $v_i$ is a simplicial vertex. Since $m_{\tau}(v) = \{v\}$ for all $v\in L_1$, $L_1$ forms an independent set and hence we must only check that edges between $L_1$ and $L_2$ and edges within $L_2$ are covered. Clearly edges between $L_1$ and $L_2$ are covered, as $\C$ consists of closed neighborhoods of vertices in $L_1$. Next, let $u,w\in L_2$ with $uw\in E(G)$. If $uw\in E(G)$, then there exists some $v\in m_{\tau}(u)\cap m_{\tau}(w)\cap L_1$. This means $uw$ is covered by $N[v]$. Thus, $\C$ is a clique cover such that every clique contains a simplicial vertex.

Now suppose $G$ is a graph with clique cover $\C = C_1,...,C_t$ such that every clique has a simplicial vertex, say $v_i$ in $C_i$. Let $L_1 = \{v_1,...,v_t\}$ and let $L_2 = V(G)\setminus L_1$. We can construct the minimal sets in a topology $\tau$ by letting $m_{\tau}(v) = \{v\}$ for all $v\in L_1$ and $m_{\tau}(u) = \{u\} \cup \{v\in L_1\cap N(u)\}$. It can easily be seen that $\tau$ is a topology such that $G_2(\tau)\cong G$ by the same logic as the forward direction.
\end{proof}

\begin{figure}[h!]
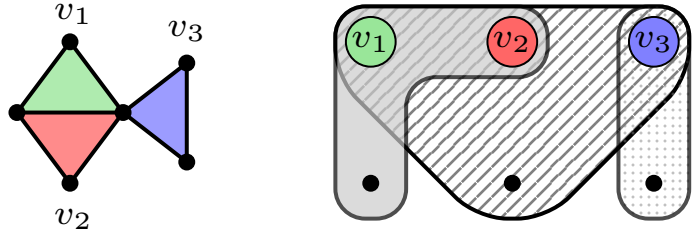

    \centering
    \includestandalone[width=0.55\textwidth]{figures/construction}
    \caption{An example of the backwards direction of \cref{thm:G2-characterization}. On the left: a graph $G$ with clique cover $\C$ where every clique contains a simplicial vertex and on the right: the minimal base of the topology $\tau$ such that $G \cong G_2(\tau)$.}
    \label{fig:core-construction}
\end{figure}

By \cref{lem: height2} and as seen in the backwards direction of \cref{thm:G2-characterization}, we can partition the vertices of $G\in \G_2$ into two layers. Because of this, we can interpret layer one ($L_1$) as a vertex set and layer two ($L_2$) as the edges of some hypergraph. Now, the minimal sets of $\tau$ are the vertices themselves and for each edge, it includes the edge along with a set of vertices it is incident to. By translating this into the language of hypergraphs, this means precisely that $\G_2$ is the \textit{middle graph} of a hypergraph. The middle graph of a hypergraph $H = (V,E)$ is the graph whose vertex set is $V\cup E$ and the edges are between edge-to-edge and edge-to-vertex adjacencies. Borowiecki \cite{MIDDLE1} proved that a graph is edge-simplicial if and only if it is the middle graph of some hypergraph. Thus, we readily see that $\G_2$ is precisely the class of graphs which are the middle graphs of hypergraphs.

For any graph $G$, the \textit{clique cover number} of $G$ is the minimum number of cliques in any clique cover of $G$. In 1972, Karp proved that finding the clique cover number of an arbitrary graph is NP-complete, one of the first problems to be classified as NP-complete \cite{KARP}. However, certain classes of graphs have polynomial time algorithms to determine the clique cover number. One such class of graphs are perfect graphs, which in 1981 were proven by Gr\"otschel, Lov\'asz, and Schrijver to have a polynomial time algorithm to find their clique cover number \cite{Grotschel}. Since comparability graphs were shown to be perfect graphs by Berge \cite{BERGE1}, the clique cover number of graphs in $\G_1$ can be found in polynomial time. It is easy to see that edge-simplicial graphs (graphs in $\G_2$) also have a polynomial time algorithm to determine their clique cover number, see \cite{UB-SURVEY}. It is worth noting that since $\G_2$ is not hereditary, there are graphs in $\G_2$ that are not perfect.

\subsection{Clique cover graphs of $\G_2$}\label{sec:clique covers}
After seeing \cref{thm:G2-characterization}, it is natural to wonder what the clique covers characterizing graphs in $\G_2$ look like. To this end we analyze the \textit{clique cover graphs} of graphs in $\G_2$. Given a graph $G$ and a clique cover $\C$, we define $H_G(\C)$ as the graph whose vertices are the cliques of $\C$, and two cliques are adjacent if and only if they share at least one vertex in $G$. Since in $\G_2$ cliques can have arbitrary size, we let $\G_2^k$ denote the graph $G\in \G_2$ with $\omega(G)\leq k$. In 1943 in \cite{KRAUSZ}, J. Krausz  proved that a graph $G$ is a line graph of some multigraph if and only if $G$ has a clique cover such that every vertex is in at most $2$ cliques (see \cite{HARARY} Theorem 8.4 pg 72). Berge then generalized this to line graphs of hypergraphs in \cite{BERGE} showing that a graph $G$ is a line graph of a hypergraph, with rank at most $k$, if and only if $G$ has a clique cover such that every vertex is in at most $k$ cliques. Using the characterization from \cref{thm:G2-characterization}, it will be fairly easy to see that in fact, clique cover graphs of $\mathcal{G}_2^k$ have exactly the required clique covers mentioned above. 

\begin{theorem}\label{thm: ccgraph}
    Let $k\geq 3$. $H$ is the clique cover graph of some $G\in \G_2^k$ if and only if $H$ is a line graph of a rank $k-1$ hypergraph.
\end{theorem}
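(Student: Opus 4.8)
The plan is to prove both implications by going through Berge's clique‑cover characterization of line graphs of bounded‑rank hypergraphs (quoted just above the statement), using the height‑$\le 2$ normal form for topologies realizing members of $\G_2$ supplied by \cref{lem:m(x)=m(y)}, \cref{lem: height2}, and the proof of \cref{thm:G2-characterization}. For a graph $G\in\G_2$ I will work throughout with a clique cover $\C$ in which every clique contains a simplicial vertex; by the normal form I may assume $\C=\{N[v]:v\in L_1\}$, where $V(G)=L_1\sqcup L_2$, $m_\tau(v)=\{v\}$ and $N(v)\subseteq L_2$ for $v\in L_1$, and $m_\tau(u)\setminus\{u\}\subseteq L_1$ for $u\in L_2$.

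For the forward direction, let $G\in\G_2^k$ and set $H=H_G(\C)$, identifying $V(H)$ with $L_1$ via $v\mapsto N[v]$. Since $L_1$ is independent and $N(v)\subseteq L_2$, two cliques $N[v],N[w]$ of $\C$ can only meet in $N(v)\cap N(w)\subseteq L_2$, so $vw\in E(H)$ iff some $u\in L_2$ has $v,w\in m_\tau(u)$. For $u\in L_2$ put $Q_u=\{N[v]:u\in N(v)\}$: each $Q_u$ is a clique of $H$, the family $\{Q_u:u\in L_2\}$ covers $E(H)$, and the vertex $N[v]$ of $H$ lies in exactly $|N(v)\cap L_2|=|N[v]|-1\le\omega(G)-1\le k-1$ of the $Q_u$ — the "$-1$'' being precisely the simplicial vertex $v$, which is private to $N[v]$. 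By Berge's theorem, $H$ is the line graph of a hypergraph of rank at most $k-1$.

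For the converse, suppose $H$ is the line graph of a rank‑$\le(k-1)$ hypergraph, and use Berge's theorem to fix a clique cover $\mathcal K=K_1,\dots,K_m$ of $H$ with every vertex of $H$ in at most $k-1$ of the $K_j$. Put $L_1=V(H)$, $L_2=\{u_1,\dots,u_m\}$, and let $\tau$ be the topology on $L_1\sqcup L_2$ with minimal base $\{\{v\}:v\in L_1\}\cup\{\{u_j\}\cup K_j:j\in[m]\}$ (viewing $K_j\subseteq L_1$). One checks that this is a legitimate minimal base, that $m_\tau(v)=\{v\}$ and $m_\tau(u_j)=\{u_j\}\cup K_j$, and that in $G:=G_2(\tau)$ one has $L_1$ independent, $v\sim u_j$ iff $v\in K_j$, and $u_i\sim u_j$ iff $K_i\cap K_j\ne\emptyset$. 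Consequently each $N[v]=\{v\}\cup\{u_j:v\in K_j\}$ is a clique (two $u_j$'s containing $v$ are adjacent because their $K_j$'s share $v$), so $v$ is simplicial, $\C:=\{N[v]:v\in L_1\}$ is a clique cover witnessing $G\in\G_2$, and $N[v]\cap N[w]\ne\emptyset$ iff some $j$ has $v,w\in K_j$ iff $vw\in E(H)$; hence $H_G(\C)\cong H$.

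The remaining point — and the crux of the converse — is to guarantee $\omega(G)\le k$. A clique of $G$ meets $L_1$ in at most one vertex, and one containing $v\in L_1$ lies inside $N[v]$, of size $1+|\{j:v\in K_j\}|\le k$, so these are harmless. The danger is a clique inside $L_2$: such a clique is a set $\{u_{j_1},\dots,u_{j_t}\}$ whose associated $K_{j_1},\dots,K_{j_t}$ pairwise intersect, so $\omega(G)\le k$ forces $\mathcal K$ to contain no $k+1$ pairwise‑intersecting members, i.e. the intersection graph $H_H(\mathcal K)$ must have clique number at most $k$. This does not follow from the Berge cover alone, so the main work is to choose $\mathcal K$ — among the many clique covers with at most $k-1$ cliques per vertex — so that it also has this bounded pairwise‑intersection property; I expect this to require the structural description of line graphs of rank‑$\le(k-1)$ hypergraphs (their canonical vertex/edge representation, or a greedy/Helly‑type refinement of the cover), and it may be cleaner to realize $G$ directly as the middle graph of the hypergraph dual to $\mathcal K$ and impose the required bound there. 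Once such a $\mathcal K$ is secured, the construction above finishes the proof.
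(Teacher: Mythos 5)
Your forward direction is correct and is, modulo notation, the paper's own argument: you index a clique cover of $H$ by the $L_2$-vertices of $G$, bound the number of covering cliques through a vertex of $H$ by $\omega(G)-1\le k-1$, and invoke Berge; the paper phrases the same count as ``a clique of size at most $k$ containing a simplicial vertex has at most $k-1$ non-simplicial vertices.''

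The gap you flag in the converse is genuine, and it cannot be closed by a cleverer choice of the Berge cover $\mathcal K$: the obstruction is intrinsic to the statement, not to your construction. Take $k=3$ and the rank-$2$ hypergraph $K_4$, so $H=L(K_4)\cong K_{2,2,2}$. Suppose some $G$ with $\omega(G)\le 3$ had a simplicial clique cover $C_1,\dots,C_6$ with intersection graph $K_{2,2,2}$, the non-adjacent pairs being $(C_1,C_2),(C_3,C_4),(C_5,C_6)$. The cliques through a fixed vertex of $G$ pairwise intersect, so each vertex lies in at most $3$ of the $C_i$; since each $C_i$ must meet its four neighbours using at most two non-simplicial vertices, each of which lies in at most two of those neighbours, a short case analysis forces (up to relabelling) $C_1=\{s_1,a,b\}$, $C_2=\{s_2,c,d\}$, $C_3=\{s_3,a,c\}$, $C_4=\{s_4,b,d\}$, $C_5=\{s_5,a,d\}$, $C_6=\{s_6,b,c\}$, whence $\{a,b,c,d\}$ induces a $K_4$ in $G$ and $\omega(G)\ge 4$, a contradiction. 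So the ``line graph $\Rightarrow$ clique cover graph'' implication fails for $k=3$, and no refinement of $\mathcal K$ (Helly-type or otherwise) can rescue it. You should also be aware that the paper's own proof never addresses this direction at all: it proves only that a clique cover graph of some $G\in\G_2^k$ is a line graph of a rank-$(k-1)$ hypergraph and then exhibits the witnessing hypergraph explicitly. In short, your proposal reproduces everything the paper actually establishes, and the step you could not complete is one the paper omits and which, as literally stated, appears to be false.
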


\begin{proof}
Let $\tau$ be a finite topology on $X$ and let $G\in \G_2^k$ with clique cover $\C = \{C_1,...,C_t\}$ such that each $C_i$ contains a simplicial vertex. The existence of $\C$ is guaranteed by \cref{thm:G2-characterization}. Let $H = H_G(\C)$. We prove that $H$ has a clique cover $\C' = \{C'_1,...,C'_s\}$ such that each vertex $C_i\in V(H)$ is in at most $k-1$ cliques. Then by Berge's characterization for line graphs of hypergraphs we obtain our desired result. Afterwards, we will give the explicit construction to obtain a $k-1$ uniform hypergraph $K$ such that $L(K) \cong H$.

We define $\C'$ as follows. First let $x_1,...,x_s$ be the vertices of $G$ which are \textit{not} simplicial vertices. Now let $C'_i = \{C_j\in \C:x_i\in C_j\}$. We first show that $\C'$ is in fact, a clique cover of $H$. Let $C_iC_j\in E(H)$. By definition, in $G$, $V(C_i)\cap V(C_j)\neq \emptyset$, let $x_{\ell}\in V(C_i)\cap V(C_j)$. Then $x_\ell$ is not a simplicial vertex and thus, $C_iC_j$ is covered by clique $Y'_{\ell}$. Next, suppose for contradiction that $C_i$ is contained in at least $k$ cliques say $C'_1,...,C'_k$. This means though that there are at least $k$ non-simplicial vertices in $C_i$. Since $|V(C_i)|\leq k$ by definition of $\G_2^k$, this means $C_i$ does not contain any simplicial vertices. This contradicts the property of $\C$ and completes the proof.
\end{proof}

\begin{figure}[h!]
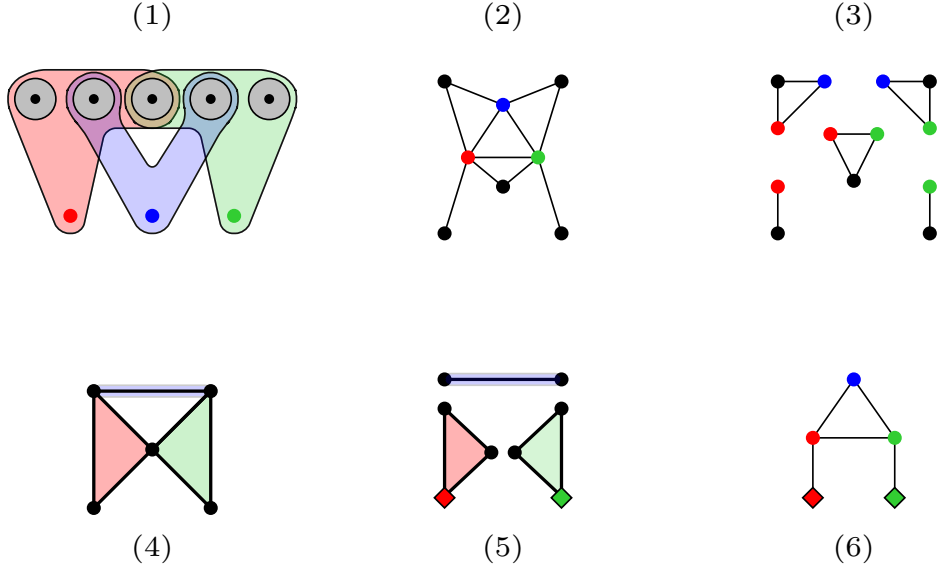

    \centering
    \includestandalone[width=0.75\textwidth]{figures/the-key}
    \caption{(1) The minimal base of a topology $\tau$, (2) $G = G_2(\tau)$, (3) The clique cover $\C$ satisfying \cref{thm:G2-characterization}, (4) $H = H_G(\C)$, (5) The clique cover of $H$ satisfying Berge's characterization of line graphs of hypergraphs \cite{BERGE}, (6) the graph $K$ with $L(K) = H$.}
    \label{fig: linegraph ex}
\end{figure}

Using the notation in the proof of \cref{thm: ccgraph}, we now explicitly give the construction to obtain, from a finite topology $\tau$, the rank $k-1$ hypergraph $K$ such that $L(K) \cong H$ where $H = H_G(\C)$ and $G\in \G^k_2$ with clique cover $\C = \{C_1,...,C_t\}$ with the property that every clique contains a simplicial vertex, guaranteed by \cref{thm:G2-characterization}. We let $\C' = \{C'_1,...,C'_s\}$ be the clique cover of $H$, where $C'_i = \{C_j\in \C:x_i\in C_j\}$ and $V(G)\setminus \{v\in V(G): \text{$v$ is a simplicial vertex}\} =x_1,...,x_s$ are the non-simplicial vertices of $G$. See \cref{fig: linegraph ex} for an example. We now define our rank $k-1$ hypergraph $K$. \begin{itemize}
    \item $V(K) = V(G)\setminus \{v\in V(G): \text{$v$ is a simplicial vertex}\}$.
    \item $E(K) = \{E_i : i\in [t]\}$ where for each $i\in [t]$, $E_i = \{v\in V(K) : v\in C_i\}$.
    \end{itemize}

Since there is one edge of $K$ for every clique in $\C$, $V(L(K)) = V(H)$ and by definition of taking a line graph, $E(L(K))=E(H)$ since two cliques are adjacent in $H$ precisely when they intersect. As mentioned in the proof of \cref{thm: ccgraph}, $K$ has rank $k-1$ since $\omega(G)\leq k$ and every clique of $\C$ has at least one simplicial vertex by \cref{thm:G2-characterization}. Note by adding dummy vertices (indicated as diamonds in \cref{fig: linegraph ex}) one can make $K$ a $k-1$-uniform hypergraph (a loopless graph in the case when $k= 3$).

\subsection{Critical graphs in $\G_2$}\label{sec:edge-critical}

In any graph class $\G$, it is natural to wonder which graphs are \textit{critical}. In this context, by critical we mean that $G\in \G$ but $G - S\notin \G$ for any $S\subseteq E(G)$. Since removing all the edges of graph in $\G_2$ yields another graph which is trivially in $\G_2$, we restrict ourselves to removing non-cut sets. Formally we say a graph $G$ is \textit{$\G_2$-critical} if in $G\in \G_2$ and for any set $S\subseteq E(G)$, $G-S$ is either disconnected or $G-S\notin \G_2$. Let $\G_2^*$ denote the set of $\G_2$-critical graphs.

Trees exhibit this ``critical" behavior among connected graphs in the sense that the removal of any edge will no longer yield a connected graph. Moreover, trees are graphs which can be built iteratively by adding a vertex and connecting it to an already existing vertex. In a similar manner, we see that $\G_2$-critical graphs also have tree-like behavior and can be built in an iterative process.

\begin{definition}
Given a graph $G$, \emph{anchoring a star to  $G$} is the operation of adding a $K_{1,t}$, for $t\geq 1$, say with vertex $x$ in the part of size $1$, and joining $x$ to a maximal clique of $G$.
\end{definition}

We restate \cref{thm:critical-new} for convenience.

\begin{theorem*}[\ref{thm:critical-new}]
$G\in \G_2^*$ if and only if $G\cong K_2$ or $G$ is connected and can be obtained by iteratively anchoring stars to $K_1$.
\end{theorem*}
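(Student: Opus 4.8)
The plan is to characterize $\G_2^*$ by exploiting the clique-cover description of $\G_2$ from \cref{thm:G2-characterization}: $G \in \G_2$ iff $G$ has a clique cover in which every clique contains a simplicial vertex. The strategy is two-directional. For the ``if'' direction, I would first check that $K_2 \in \G_2^*$ directly (it is in $\G_2$ since it is a clique, and deleting its only edge disconnects it). Then I would argue by induction that any graph $H$ obtained by iteratively anchoring stars to $K_1$ lies in $\G_2^*$. Membership in $\G_2$ is easy: each anchored star $K_{1,t}$ together with its attachment point sits inside a clique with the new leaves as simplicial vertices, and one checks these cliques (plus the singleton cliques for any isolated-in-construction vertices) form a valid cover with a simplicial vertex in each. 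Criticality is the substantive part: given a nonempty $S \subseteq E(H)$ with $H - S$ connected, I want to show $H - S \notin \G_2$. The key structural fact to isolate is that in such an $H$, every edge $e$ lies on no cycle except those entirely inside a single anchored clique, and more precisely the ``blocks'' of $H$ are exactly the maximal cliques created at each anchoring step; deleting any edge from inside one of these cliques either disconnects $H$ or destroys the simplicial-vertex-in-every-clique property in a way no alternative cover can repair. I expect to need a lemma of the form: \emph{if $G$ is connected and $2$-edge-connected, then $G \in \G_2$ forces $G$ to be complete} (or something close), which pins down why you cannot delete edges within a clique without either disconnecting or leaving $\G_2$.

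For the ``only if'' direction, I would take $G \in \G_2^*$ connected and not $K_2$, and show it can be built by iterated star-anchoring from $K_1$. Using \cref{thm:G2-characterization}, fix a clique cover $\C$ of $G$ with a simplicial vertex in each clique, chosen minimally (say, with fewest cliques, and among those with fewest total incidences). The plan is to show that criticality forces $\C$ to have a very rigid shape: the maximal cliques of $G$ coincide with the cliques of $\C$, each clique of $\C$ has exactly one simplicial vertex unless the clique is a ``leaf'' clique, and the ``intersection pattern'' of $\C$ is a tree (no clique shares vertices with two others in a cyclic way, else one could delete an edge and stay connected and in $\G_2$, contradicting criticality). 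Concretely, I would build a bipartite-like incidence structure between maximal cliques and their non-simplicial vertices, argue it is acyclic, then peel off a leaf clique: a leaf clique $C$ meets the rest of $G$ in a single vertex $w$, and $C \setminus \{w\}$ consists of simplicial vertices of $G$, so $C$ is precisely an anchored star $K_{1,|C|-1}$ attached at $w$. Removing $C \setminus \{w\}$ yields a smaller graph $G'$ which I claim is again in $\G_2^*$ (here criticality of $G$ is used to rule out degenerate overlaps that would spoil $G'$), and induction finishes it, bottoming out at $K_1$ or $K_2$.

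The main obstacle, and where I would spend the most care, is the criticality analysis in both directions — specifically proving that a connected $2$-edge-connected graph in $\G_2$ must be complete (or identifying the exact minimal obstruction to membership that appears when you delete an edge from a clique), and dually, showing that if $G \in \G_2^*$ then no maximal clique can share vertices with the rest of $G$ in more than one vertex. A useful auxiliary tool will be small forbidden configurations for $\G_2$: for instance, a cycle $C_n$ with $n \geq 4$ is not in $\G_2$ (no vertex is simplicial, and any clique cover is the edge set, none of whose edges/cliques contains a simplicial vertex), and more generally a graph with no simplicial vertex and no vertex adjacent to one is excluded; I would look for such a configuration inside $H - S$ whenever $H \in \G_2^*$, $S \neq \emptyset$, and $H - S$ is connected. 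I would also double-check the base/degenerate cases carefully — e.g. why $K_2$ is listed separately (anchoring a star to $K_1$ with $t = 1$ already gives $K_2$, so the separate mention is presumably a harmless redundancy or guards against an edge case in the inductive peeling), and whether a single vertex $K_1$ is or isn't considered to be in $\G_2^*$.
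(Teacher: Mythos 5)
There are two concrete gaps in your plan. First, the lemma you expect to need for criticality --- that a $2$-edge-connected graph in $\G_2$ must be complete --- is false: $K_4$ minus an edge is $2$-edge-connected, lies in $\G_2$ (cover it by the two triangles, each containing one of the two degree-$2$ simplicial vertices), and is not complete. The actual reason you cannot delete edges inside an anchored clique is more local and hinges on the pendant leaves: if $x$ is the centre of the last anchored star, joined to the maximal clique $C_1$ with simplicial vertex $v_1$ and carrying pendants $y_1,\dots,y_t$, then deleting a non-cut edge $xv$ with $v\in C_1$ forces some surviving edge $xu$ to be covered by a clique all of whose vertices still see both $x$ and $v$ (or are $x$ itself, which sees the non-adjacent pair $y_1,u$), so no clique covering $xu$ can contain a simplicial vertex. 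That is the argument the paper runs, and it does not follow from any statement about $2$-edge-connectivity.

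Second, in your ``only if'' direction the peeling step removes the wrong unit. Reversing one anchoring must delete the star centre $x$ \emph{together with all of its pendant leaves}, not a single leaf clique $C\setminus\{w\}$. For example, take the triangle $\{a,x,x'\}$ with one pendant on $x$ and one on $x'$ (built by two anchorings from $K_1$); peeling only the pendant of $x$ leaves a triangle with a single pendant, which is \emph{not} in $\G_2^*$ (deleting the triangle edge not incident to the pendant leaves a star $K_{1,3}\in\G_2$), so your induction hypothesis fails for the peeled graph. Relatedly, $K_2$ is not a harmless redundancy: anchoring $K_{1,t}$ adds $t+1\ge 2$ vertices, so every graph built from $K_1$ has at least three vertices and $K_2$ must be listed separately. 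The paper sidesteps the structural analysis of critical graphs entirely by proving that \emph{every} connected $G\in\G_2$ contains a spanning subgraph $W$ built by iterated anchoring (constructed from a spanning tree of the clique cover graph $H_G(\C)$), so that $E(G)\setminus E(W)$ witnesses non-criticality whenever $G\ne W$; if you prefer your direct peeling route, you must first prove the rigidity facts you only conjecture (tree-like intersection pattern of the cover, leaf cliques meeting the rest in one vertex) and peel whole anchored stars.
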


\begin{proof}
It is easy to see that $K_2\in\G_2^*$ so now we assume $G\not \cong K_2$. We prove both directions of this via a series of claims. We first show that such hypothesized graphs are $\G_2$-critical.

\begin{claim}
If $G$ can be obtained by iteratively anchoring stars to a $K_1$, then $G\in \G_2^*$.
\end{claim}

\begin{proofc}
  To show this, we first show that $G\in \G_2$ and then show that the removal of any edge set $S$ either disconnects $G$ or $G-S\notin \G_2$. To do this, we prove the stronger statement that if $H$ satisfies the following two conditions, (R1) and (R2), then anchoring a star to $H$ produces a graph which still satisfies conditions (R1) and (R2). Then since $K_1$ trivially satisfies (R1) and (R2), we inductively prove our claim. The conditions are as follows.

  \begin{enumerate}
      \item[(R1)] $H\in \G_2^*$ and $H\not \cong K_2$.
      \item[(R2)] Every maximal clique in $H$ is in the clique cover of $H$, from \cref{thm:G2-characterization} in which every clique contains a simplicial vertex.
  \end{enumerate}

Consider a graph $H$ satisfying conditions (R1) and (R2). By \cref{thm:G2-characterization}, there exists a clique cover of $H$, with maximal cliques, say $\C_H = \{C_1,...,C_k\}$ such that every clique has a simplicial vertex. Let $H'$ be the graph obtained by anchoring a star, $K_{1,t}$ where $t\geq 1$, to $H$. Let the vertices of the $K_{1,t}$ be $x,y_1,.,,,y_t$, where $x$ is the universal vertex. Since $H$ satisfies condition (R2), without loss of generality let $C_1$ be the maximal clique of $H$ in which we join $x$ to. Let $v_1$ be a simplicial vertex of $C_1$. Note that $v_1$ is still a simplicial vertex of $H'$ and thus, $\C_{H'} = \{C_1\cup x\} \cup \{C_i : 2\leq i \leq k\} \cup \bigcup_{i=1}^t\{xy_i\}$ is clique cover of $H'$ in which every clique has a simplicial vertex (note $y_i$ is a simplicial vertex for all $i\in [t]$). Thus, $H'\in \G_2$. Moreover, every maximal clique of $H'$ is in $\C_{H'}$ since $x$ is joined to a maximal clique. Hence, $H'$ satisfies conditions (R2).

Next we show that $H'\in \G_2^*$. To see this, first let $S_x$ be a subset of edges incident to $x$ which does not disconnect $H'$. Hence, $xy_i\notin S_x$ for all $i\in [t]$. Let $e = xv \in S_x$ where $v\in V(C_1)$. If $|V(C_1)| = 1$, then $e$ is a cut edge so we may assume $|V(C_1)|\geq 2$. Since $S_x$ is not an edge-cut, there must exist some $u\in V(C_1)\setminus \{v\}$ such that $xu\notin S_x$. Note that $C_1\cup \{x\}$ no longer is a clique in $H'-S_x$ and thus, in any clique covering of $H'-S_x$ with maximal cliques, $xu$ must be covered by a clique which is not $C_1\cup x$. Let $C'$ be a maximal clique in $H'-S_x$ covering $xu$. Note that $V(C')\setminus \{x\}\subseteq V(C_1)\setminus \{v\}$. But then any vertex in $V(C')\setminus \{x\}$ cannot be a simplicial vertex in $H'-S_x$ since it must have both $x$ and $v$ as neighbors. Since $x$ is also not a simplicial vertex in $H'-S_x$, this means $H'-S_x\notin \G_2$. Thus, if $S_x \neq \emptyset$, $H'-S_x\notin \G_2$.

Now suppose for contradiction there is some edge set $S$, such $H'-S\in \G_2$. By the previous paragraph we have that $S\subseteq E(H)$ and since $H\in \G_2^*$, either $H-S$ is disconnected or $H-S\notin \G_2$. If $H-S$ is disconnected but $H'-S$ is not, then since $x$ is joined to a maximal clique, the only way for this to occur is if $x$ is joined to a $K_2$. But this would contradict $H$ satisfying (R1). Now suppose $H-S\notin \G_2$. Then, by \cref{thm:G2-characterization}, there must exist a maximal clique of $H-S$ which has no simplicial vertex. But again since $x$ is joined to a maximal clique, a non-simplicial vertex cannot become a simplicial vertex in $H'$. Thus, $H'-S$ cannot have a clique cover of maximal cliques such that every clique has a simplicial vertex. This contradicts that $H'-
S\in \G_2$.

This proves that in fact, $H'\in \G_2^*$ since otherwise $H$ itself would not be $\G_2$-critical. Thus, $H'$ also satisfies condition (R1) which completes the claim.
\end{proofc}

To prove the other direction, to suffices to show that for any $G\in \G_2$, there exists a connected spanning subgraph $W$ which can be obtained by iteratively anchoring stars to $K_1$. Thus, if $G\not \cong W$, $E(G)\setminus E(W)$ is precisely the desired edge set to demonstrate non-criticality. 

We find such a connected spanning subgraph of $G$ as follows. Let $\C = \{C_1,...,C_k\}$ be the clique cover of $G$ such that each clique of $\C$ has a simplicial vertex, guaranteed by \cref{thm:G2-characterization}. Let $H_G(\C)$ be the clique cover graph of $\C$ and let $T_{\C}$ be a spanning tree of $H_G(\C)$ (see middle of \cref{fig:critical-guide}). Note that since $G$ is connected, $H_G(\C)$ is also connected and thus, $T_{\C}$ exists. We now use $T_\C$ to find a tree $T_G$ in $G$ with the following properties (see left of \cref{fig:critical-guide}).
\begin{enumerate}
    \item[(P1)] For any leaf $C_i$ in $T_\C$, we let $v_i$ be a leaf in $T_G$ where $v_i$ is the corresponding simplicial vertex in clique $C_i$. 
    \item[(P2)] Each non-leaf vertex $C_i$ of $T_\C$ can be mapped to one non-leaf vertex $v_i$ of $\T_G$ such that $v_i$ is in clique $C_i$ and $v_i$ is not a simplicial vertex. 
    \item[(P3)]If $C_iC_j\in E(T_\C)$, then either $v_i = v_j$ or $v_iv_j\in E(T_G)$.
\end{enumerate}

Why can we always find a tree $T_G$ with properties (P1)-(P3)? First note that we can certainly guarantee (P1) since each clique $C_i$ must have a simplicial vertex by \cref{thm:G2-characterization} and we can guarantee (P2) since $G$ is connected and hence if $G$ is not a complete graph, every clique in $\C$ has at least one non-simplicial vertex. Finally, notice that if $C_iC_j\in E(T_\C)$, then it must mean that $V(C_i)\cap V(C_j)\neq \emptyset$ in $G$. This means, after selecting the leaves of $T_G$, we select the non-leafs by considering if $v_i$ has already been selected. Then either $v_i\in V(C_i)\cap V(C_j)$ and $v_i = v_j$ or there exists a $v_j\in V(C_i)\cap V(C_j)$ which by definition must neighbor $v_i$. Thus, we satisfy (P3) and in fact, can construct $T_G$.

We next construct a spanning subgraph $W$ of $G$ from $T_G$ as follows (see right of \cref{fig:critical-guide}). We first add back any edges between non-leaf vertices $u,u'$ if they are adjacent in $G$. Next, for each edge $uv\in E(T_G)$ which is not incident to a leaf, there must be a clique $C_i$ such that $uv\in E(C_i)$, moreover, the simplicial vertex of $C_i$, say $w_i$ in not in $V(T_G)$. For each such edge we first add the simplicial vertex $w_i$ and join it to both $u$ and $v$. Let $W'$ be the graph obtained by adding all such simplicial vertices. Finally, for any vertex $x\in V(G)\setminus V(W')$, $x$ must be in at least one clique $C_i$ where a non-simplicial vertex $v_i$ of $C_i$ is in $V(T_G)$. Join $x$, as a leaf, to one of these such $v_i$'s. To finish the theorem we prove the following claim.

\begin{claim} $W$ is a connected spanning subgraph of $G$ which is either $K_2$ or can be obtained by iteratively anchoring a stars to $K_1$.
\end{claim}

\begin{proofc}
Certainly $W$ is a connected spanning subgraph of $G$. Now let $y_1$ be a leaf of $T_G$. Note that by construction $y_1$ is also a leaf of $W$ and by definition, $y_1$ is a simplicial vertex of some clique, say $C_1$. Let $x$ be the neighbor of $y_1$ in $W$ (and also in $T_G$). If $x$ is also a leaf then $W\cong K_2$, and we trivially have our desired claim. Otherwise, $x$ is not a leaf and hence $x$ must be a non-simplicial vertex representing a clique, say $C_j$ in $T_G$. For every edge $xz\in E(T_G)$, either $z$ is a leaf, or there exists a $w\in N_W(x)\cap N_W(z)$ where $w$ is the simplicial vertex of $C_j$ (see the right of \cref{fig:critical-guide}).  This is the edge incident to $x$ added to form $W'$. For any edge $xz \in E(W)\setminus E(W')$, $z$ is by definition a leaf. Thus, $N_W(x)$ can be partitioned into a set of leaves, say $y_1,...,y_t$, and a subset $C_{W,j}\subseteq C_j$ which is a maximal clique in $W$ containing $x$. Hence, anchoring a star to $W_1:= W - \{x,y_1,...,y_t\}$ obtains $W$. Moreover, in $W_1$, $w$ is a leaf. Since $W$ has at least two leaves to start (namely any two leaves in $T_G$), then $W_1\not \cong K_2$. By iteratively removing anchored stars and repeating this argument, we will eventually get down to a $K_1$.
\end{proofc}

Thus, we see that for each $G\in \G_2$ there exists a spanning subgraph $W$ which is either $K_2$ or can be constructed via successively anchoring a star to $K_1$. Since such graphs are $\G_2$-critical it proves they are the only such graphs which are $\G_2$-critical.
\end{proof}

\begin{figure}[h!]
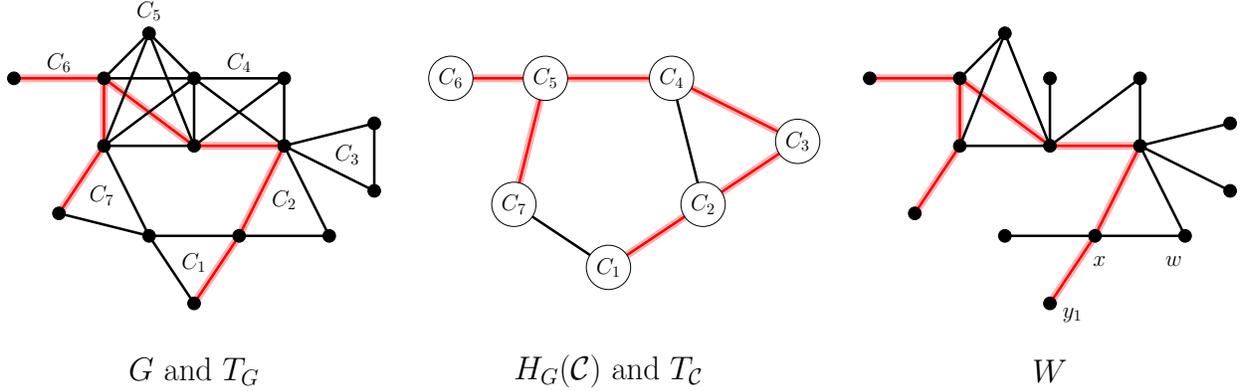

    \centering
    \includestandalone[width=0.99\textwidth]{figures/critical-ex}
    \caption{An example for \cref{thm:critical-new}. In red, $T_G$ (left and right) and $T_\C$ (middle)}
    \label{fig:critical-guide}
\end{figure}

\section{A closer look into $\Gthreeand$}\label{sec:G3and}
The graph class $\Gthreeand$ is the first among the sequence of six graph classes that has not yet been studied in the literature. Before proving \cref{thm:G3and-universe}, we show that no graph in $\Gthreeand$ (and thus, no graph in any successive class) is bipartite.

\begin{theorem}
  Let $G\in \G_i$ for $i\in \{3,4\}$ with $|V(G)|\geq 3$. Then $\chi(G)\geq 3$.
\end{theorem}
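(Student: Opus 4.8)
The plan is to show that any $G\in\G_i$ for $i\in\{3,4\}$ with at least three vertices contains either a triangle or an induced path on three vertices whose endpoints are forced to be adjacent, so $G$ cannot be bipartite (equivalently, $G$ has no induced subgraph that is $2$-colorable on all of its vertices). Since $\Gthreeand\subseteq\Gthreeor\subseteq\G_4$, it suffices to handle $i=3'$; the larger classes only have more edges, so a triangle in $G_{3'}(\tau)$ persists. Let $\tau$ be a finite topology on $X=V(G)$ with $G\cong G_i(\tau)$, and by \cref{lem:m(x)=m(y)} assume all minimal sets are distinct. Let $H=G_1(\tau)$ be the underlying comparability graph, so $E(H)\subseteq E(G)$.

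First I would reduce to showing that $G$ is connected implies $G$ has an odd cycle; since $\chi(G)\ge 3$ need only hold for $G$ with a component of size $\ge 3$, and a disjoint-union argument lets us pass to that component. So assume $G$ is connected with $|V(G)|\ge 3$. If $H$ already contains a vertex of degree $\ge 2$, say $u$ adjacent in $H$ to $v$ and $w$, then $m_\tau(v),m_\tau(w)$ are both comparable to $m_\tau(u)$. If $m_\tau(v)$ and $m_\tau(w)$ are comparable to each other we get a triangle $uvw$ in $H\subseteq G$ and are done. Otherwise, up to relabeling, $m_\tau(v),m_\tau(w)\subseteq m_\tau(u)$ (the case $m_\tau(u)\subseteq m_\tau(v),m_\tau(w)$ is symmetric via reversing the roles below): then $v,w\in m_\tau(u)$, so by \cref{obs: topology}(\ref{obs:T3-and}) we have $m_\tau(\ell(v))\cap m_\tau(w)\neq\emptyset$ and $m_\tau(v)\cap m_\tau(\ell(w))\neq\emptyset$, i.e. $v$ and $w$ are $\Tthreeand$-adjacent, hence $vw\in E(G)$, giving the triangle $uvw$ in $G$. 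The remaining case is that $H$ has maximum degree at most $1$, i.e. $H$ is a perfect matching plus isolated vertices; then since $|V(G)|\ge 3$ and $G$ is connected, $G$ has an edge $xy\in E(H)$ together with a further vertex $z$ adjacent in $G$ (not $H$) to one of them — I would argue $z$ is $\Tthreeand$-adjacent to both $x$ and $y$ by tracing through the minimal-set conditions, using that the only neighbor of $z$ in $H$ (if any) is its matching partner, to again produce a triangle.

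The main obstacle I anticipate is the last case, where $H$ is essentially edgeless: there I need to use \cref{lem: sep to dist} and \cref{rem:proof of sep to dist} carefully to control the structure of a shortest $H$-path realizing a $\Tthreeand$-adjacency, and to verify that whenever $G$ is connected on $\ge 3$ vertices some triangle must appear — the subtlety is ruling out that $G$ is, say, a long even cycle. I expect that the ``half-closed'' nature of $\Tthreeand$-adjacency (needing the condition in *both* directions) is exactly what forces a short cycle: a $\Tthreeand$-adjacency with $H$-distance $3$ requires both a $u,v$-path and a $v,u$-path of the alternating type from \cref{fig:comp-paths}(iii), and overlaying these two length-$3$ paths in $H$ on the same pair of endpoints produces a closed walk of odd length, hence an odd cycle in $G$. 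Once an odd cycle is located, $\chi(G)\ge 3$ is immediate. I would organize the write-up as: (1) reduce to $G$ connected; (2) the $\Delta(H)\ge 2$ case via \cref{obs: topology}(\ref{obs:T3-and}); (3) the $\Delta(H)\le 1$ case via path-overlay and \cref{rem:proof of sep to dist}.
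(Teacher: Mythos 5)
Your overall strategy is sound and runs parallel to the paper's, but through a different auxiliary graph. The paper passes through $G_2(\tau)$: if $G$ were bipartite then $G_2(\tau)$ would be too, so (by the edge-simplicial characterization) every component of $G_2(\tau)$ is a star; a star with a center $z$ and two leaves yields $x',y'\in m_\tau(z)$, and \cref{obs: topology}(\ref{obs:T3-and}) promotes this cherry to a triangle in $G_i(\tau)$. You instead work with $H=G_1(\tau)$ and a max-degree dichotomy, and your $\Delta(H)\ge 2$ case is correct and complete: the three sub-cases ($m_\tau(v),m_\tau(w)$ comparable; both contained in $m_\tau(u)$, handled by \cref{obs: topology}(\ref{obs:T3-and}); both containing $m_\tau(u)$, which already gives $T_2$-adjacency) each produce a triangle, and a triangle in $G_{3'}(\tau)$ persists in $G_{3''}(\tau)$ and $G_4(\tau)$ because of the edge containments for a fixed $\tau$. (Be careful to phrase the reduction that way rather than as a containment of the abstract classes $\Gthreeand\subseteq\Gthreeor\subseteq\G_4$, which is not what the inclusion of edge sets gives you.)

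The genuine gap is your $\Delta(H)\le 1$ case, and the resolution you sketch there would not work: if $H$ is a matching plus isolated vertices, $H$ contains no path of length $2$ at all, so there are no ``type (iii)'' length-$3$ paths to overlay, and no long even cycle to rule out. The case is in fact vacuous, and the lemma you already cite closes it in one line: by \cref{lem: sep to dist}, any $T_i$-adjacent pair has $\mathrm{dist}_H(u,v)\le i<\infty$, so every edge of $G_i(\tau)$ joins vertices in the same component of $H$; combined with $E(H)\subseteq E(G_i(\tau))$, the components of $G$ and of $H$ coincide. With $\Delta(H)\le 1$ every such component has at most two vertices, contradicting the standing assumption that $G$ is connected on at least three vertices. (That connectivity assumption is implicit in the paper's proof as well---its final step derives a contradiction from ``$G_i(\tau)$ is connected''---so your reduction to a component of size at least $3$ matches the paper's reading of the statement; note, though, that a component of size $\ge 3$ alone does not give $\chi(G)\ge 3$ until the triangle is actually produced, so you cannot discharge the chromatic claim at the reduction step, only after the case analysis.) With that one-line replacement for your third step, your argument is complete and is arguably more self-contained than the paper's, since it does not invoke \cref{thm:G2-characterization}.
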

\begin{proof}
Suppose for contradiction there exists $G\in \G_i$ for some $i\in \{3,4\}$ such that $G$ is bipartite. Then $G\cong G_i(\tau)$ for some finite topology $\tau$. Since $E(G_2(\tau))\subseteq E(G_i(\tau))$, $G_2(\tau)$ is also bipartite. But then it must be the case that every component of $G_2(\tau)$ is a star as a corollary of \cref{thm:G2-characterization}. 

Suppose there exists a component with at least $3$ vertices, $x,y,z$ where $xz,yz\in E(G_2(\tau))$. Then $m_{\tau}(x)\cap m_{\tau}(y) = \emptyset$, but $m_{\tau}(x)\cap m_{\tau}(z)\neq \emptyset$ and $m_{\tau}(y)\cap m_{\tau}(z)\neq \emptyset$. Let $x'\in m_{\tau}(x)\cap m_{\tau}(z)$ and let $y'\in m_{\tau}(y)\cap m_{\tau}(z)$ (note that $x'$ could be $x$ and $y'$ could be $y$). By \cref{obs: topology}(\ref{obs: ell intersection}), $m_{\tau}(x')\subseteq m_{\tau}(z)$ and $m_{\tau}(y')\subseteq m_{\tau}(z)$. Thus, $z\in \ell_{\tau}(x')\cap\ell_{\tau}(y')$ implying that $x',y'$, and $z$ will form a clique in $G_i(\tau)$ for any $i\in \{3,4\}$. This contradicts $G_i(\tau)$ being bipartite.

Now suppose each component of $G_2(\tau)$ has size at most $2$. We claim this cannot occur since if $G_i(\tau)$ is connected then $G_2(\tau)$ is connected. Suppose not, and let $x,y\in V(G_2(\tau))$ be disconnected vertices. Since they are connected in $G_i(\tau)$ we let $x = x_1,x_2,...,x_k=y$ be an $(x,y)$-path in $G_i(\tau)$. For any consecutive $x_i,x_{i+1}$, since $x_ix_{i+1}\in E(G_i(\tau))$ and by \cref{obs: topology}(\ref{obs: ell must exist}) and \cref{obs: topology}(\ref{obs: ell intersection}), there exists some $v_i\in m_{\tau}(\ell(x_i))\cap m_{\tau}(x_{i+1})$ with $m_{\tau}(v_i)\cap m_{\tau}(x_i)\neq \emptyset$, and $m_{\tau}(v_i)\cap m_{\tau}(x_{i+1})\neq \emptyset$. Hence, $x = x_1v_1x_2v_2...x_{n-1}v_{n-1}x_n = y$ is an $(x,y)$-walk in $G_2(\tau)$ contradicting that $x$ and $y$ are disconnected.
\end{proof}

\begin{definition}\label{def:universe}
A \textit{universe} $U = (s,P,M)$ is a graph with vertex set $V(U) = \{s\} \cup P \cup M$ that satisfies the following conditions.
\begin{enumerate}
    \item $s$ is a universal vertex (the sun).
    \item $U[P]$ induces a clique (the planets).
    \item for all $v\in M$ (the moons), there exists $u\in P$ such that $uv\in E(U)$.
    \item for all $x,y\in M$, $xy\in E(U)$ if and only if there exist $z\in P$ with $xz,yz\in E(U)$.
\end{enumerate}
\end{definition}

A universe $U = (s,P,M)$ generalizes a clique when $M = \emptyset$. Specifically, $U = (s,P,\emptyset)$ is isomorphic to $K_{|P|+1}$. See \cref{fig:universe} for an example of a universe. We are now ready to prove \cref{thm:G3and-universe} we we restate for convenience.

\begin{figure}[h!]
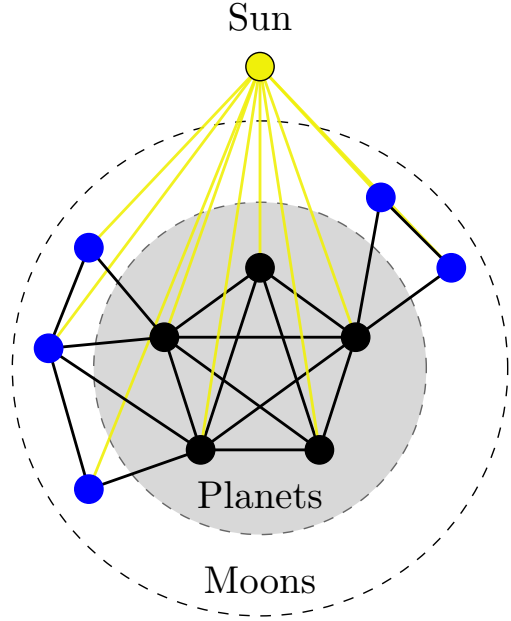

    \centering
    \includestandalone[width=0.4\textwidth]{figures/universe}
    \caption{An example of a universe}
    \label{fig:universe}
\end{figure}

\begin{theorem*}[\ref{thm:G3and-universe}]
$G\in \Gthreeand$ if and only if $G$ has an edge covering with universes $\mathcal{U} = U_1,U_2,...,U_t$ where $U_i = (s_i,P_i,M_i)$, $P = \bigcup_{i=1}^t P_i$, and $S = \{s_1,...,s_t\}$ and the following hold.

\begin{enumerate}
   \item $S\cup P$ partition $V(G)$.
   % \item $\forall u,v \in P$ with $N(u) \cap N(v) \cap S \not= \emptyset$, then $uv \in E(G)$. \owen{don't need this}
   \item $\forall s_i,s_j \in S$ with $N(s_i) \cap N(s_j) \cap P \not= \emptyset$, then $s_is_j \in E(G)$.
\end{enumerate}
\end{theorem*}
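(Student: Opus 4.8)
The plan is to prove both directions by translating the $\Tthreeand$-adjacency condition into the language of minimal sets, following the template established for $\G_2$ in \cref{thm:G2-characterization} and \cref{lem: height3}. By \cref{lem:m(x)=m(y)} and \cref{lem: height3}, for $G \in \Gthreeand$ we may fix a finite topology $\tau$ with $G \cong G_{3'}(\tau)$, all minimal sets distinct, and height at most $3$. The idea is that this height-$3$ structure naturally stratifies $V(G)$ into levels, and the middle level should play the role of the suns $S$ while the bottom level plays the role of the planets $P$. More precisely, I would try to show that one can choose $\tau$ so that $V(G)$ partitions into $L_1 \cup L_2 \cup L_3$ with $m_\tau(v) = \{v\}$ on $L_1$, $m_\tau(u) \subseteq L_1 \cup \{u\}$ on $L_2$, and $m_\tau(w) \subseteq L_1 \cup L_2 \cup \{w\}$ on $L_3$, and then identify $P = L_1$, $S = L_2$ (with each sun $s$ generating the universe whose planets are $P \cap m_\tau(s)$ and whose moons are the elements of $L_2 \cup L_3$ adjacent into that clique), while $L_3$ must be reabsorbed. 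The key computation is to unwind, for $u,v$ in various levels, exactly when $m_\tau(u) \cap m_\tau(\ell(v)) \neq \emptyset$ and $m_\tau(\ell(u)) \cap m_\tau(v) \neq \emptyset$ both hold; using \cref{obs: topology}(\ref{obs: ell must exist}) and (\ref{obs:T3-and}) this should reduce to: $u,v$ are adjacent iff there is a planet $z \in P$ with $m_\tau(z) \subseteq m_\tau(u) \cap m_\tau(v)$ lying "below" both in both directions, which is precisely conditions (3) and (4) of \cref{def:universe} for the moons together with condition (2) for the suns.

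For the forward direction, once the stratification is in hand, I would define $\mathcal{U}$ by taking one universe $U_i = (s_i, P_i, M_i)$ for each $s_i \in S = L_2$, with $P_i = P \cap m_\tau(s_i)$ and $M_i$ the set of vertices $v \notin \{s_i\} \cup P_i$ with $P_i \cap m_\tau(v) \neq \emptyset$ (equivalently with a neighbor in $P_i$). Checking that each $U_i$ is genuinely a universe amounts to verifying \cref{def:universe}(1)--(4): (1) $s_i$ is universal to its universe because $P_i \subseteq m_\tau(s_i)$ forces, via \cref{obs: topology}(\ref{obs:T3-and}), adjacency of $s_i$ to each planet and each moon; (2) $U[P_i]$ is a clique since all planets of $U_i$ lie in the single minimal set $m_\tau(s_i)$, hence pairwise $T_2$-adjacent and a fortiori $\Tthreeand$-adjacent; (3) is the definition of $M_i$; (4) is the heart — two moons $x,y$ are adjacent in $G$ iff they share a common planet neighbor in $P_i$ — and this is where the $\Tthreeand$-adjacency characterization (the "and" of the two intersection conditions) does the work, forcing the witness to live in $P = L_1$. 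Then $\mathcal{U}$ edge-covers $G$: every edge has at least one endpoint whose minimal set reaches down to a planet, and one checks each of the possible level-pairs of an edge is covered by some $U_i$. Condition (1) of the theorem is immediate from $S = L_2$, $P = L_1$; condition (2) follows because $N(s_i) \cap N(s_j) \cap P \neq \emptyset$ means $s_i,s_j$ share a planet $z$, i.e. $m_\tau(z) \subseteq m_\tau(s_i) \cap m_\tau(s_j)$, and by \cref{obs: topology}(\ref{obs:T3-and})-type reasoning this makes $s_i s_j \in E(G)$.

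For the converse, given an edge covering by universes satisfying (1) and (2), I would build a finite topology $\tau$ on $V(G)$ directly: set $m_\tau(p) = \{p\}$ for each planet $p \in P$; for each sun $s_i$ set $m_\tau(s_i) = \{s_i\} \cup P_i$; and one must be careful about vertices that are moons but not suns or planets — by (1), $S \cup P$ partitions $V(G)$, so in fact every moon is either a sun or a planet, and the topology is completely determined. One then verifies directly that $G_{3'}(\tau) \cong G$ by checking the $\Tthreeand$-adjacency condition for all pairs: planet-planet pairs are non-adjacent in $G_{3'}(\tau)$ unless they share (as the "$\ell$") a common sun, matching (2) and the clique structure of each $P_i$; sun-planet and sun-sun adjacencies are forced exactly by the universe axioms plus condition (2). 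The main obstacle I anticipate is the converse direction's bookkeeping: showing that the topology built from the universe cover reproduces \emph{exactly} the edge set of $G$ — no more, no fewer — requires carefully matching the "if and only if" in \cref{def:universe}(4) against the symmetric double-intersection in the definition of $\Tthreeand$-adjacency, and in particular ruling out spurious adjacencies between moons lying in different universes. A secondary subtlety in the forward direction is justifying that one may assume $L_3 = \emptyset$ (or otherwise handling level-$3$ vertices), which may need an extra reduction lemma in the spirit of \cref{lem: height2} but tailored to $\Tthreeand$; if height $3$ genuinely cannot be reduced to height $2$, then level-$3$ vertices must be shown to still fit the moon role, and verifying \cref{def:universe}(4) for moon pairs drawn from $L_3$ will be the delicate case.
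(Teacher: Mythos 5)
There is a genuine gap: you have inverted the roles of the suns and planets relative to what actually works, and the inversion is not cosmetic. The paper takes the suns to be the \emph{bottom} level $L_1$ (the vertices with $m_\tau(v)=\{v\}$), and for each $v\in L_1$ the planets of the universe at $v$ are $P_v=\{u: v\in m_\tau(u)\}\subseteq L_2\cup L_3$, with the moons being the remaining neighbors of $v$; then $S=L_1$ and $P=L_2\cup L_3$ genuinely partition $V(G)$. Your assignment $S=L_2$, $P=L_1$ breaks down at exactly the point you flag as a ``secondary subtlety.'' Height $3$ is not reducible for $\Tthreeand$, so $L_3\neq\emptyset$ in general, and condition (1) of the theorem forces every vertex to be a sun or a planet, so $L_3$ cannot be ``reabsorbed'' as moons. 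Concretely, take the minimal base $\{a\},\{b\},\{c\},\{a,x\},\{c,y\},\{a,b,x,u\},\{b,c,y,v\}$. Here $u,v\in L_3$ lie in no $P_i$ and are not suns under your scheme, so (1) fails; and if one forces $b$ into $P$ (as (1) requires), then (2) fails as well, since $m_\tau(\ell(x))=\{a,b,x,u\}$ and $m_\tau(\ell(y))=\{b,c,y,v\}$ give $xb,yb\in E(G_{3'}(\tau))$ while $m_\tau(x)\cap m_\tau(\ell(y))=\emptyset$ gives $xy\notin E(G_{3'}(\tau))$ --- so $x,y\in S$ share the neighbor $b\in P$ without being adjacent. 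With the paper's orientation ($S=\{a,b,c\}$) the same topology satisfies both conditions. Your backward construction has the same inversion: making the planets singletons and setting $m_\tau(s_i)=\{s_i\}\cup P_i$ yields $\ell_\tau(s_i)=\{s_i\}$, hence $m_\tau(\ell(s_i))=\{s_i\}\cup P_i$, so a sun is never $\Tthreeand$-adjacent to a moon lying in $P\setminus P_i$ and the sun--moon edges of $G$ are lost.

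The correct orientation is also what makes the step you call ``the heart'' go through: with $m_\tau(v)=\{v\}$ for a sun $v$, the adjacency $vw\in E(G)$ forces $v\in m_\tau(\ell(w))$, i.e.\ there is some $u$ with $m_\tau(w)\subseteq m_\tau(u)$ and $v\in m_\tau(u)$; that $u$ is the planet neighbor required by \cref{def:universe}(3), and two moons of $v$ are adjacent iff they admit a common such $u$, which is \cref{def:universe}(4). In the backward direction the paper keeps three levels: $L_1=S$, and a planet is placed in $L_3$ exactly when some universe gives it a non-sun moon neighbor, with $m_\tau$ there absorbing those moons so that $m_\tau(\ell(u))=m_\tau(u)$. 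Your outline would need to be rebuilt around this orientation before the case analysis can be carried out.
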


\begin{proof}
(\textbf{Forward direction}) Let $G\in \Gthreeand$ where $G\cong G_{3'}(\tau)$. From \cref{lem:m(x)=m(y)} and \cref{lem: height3} we can assume that $\tau$ has height at most $3$ and $m_{\tau}(x)\neq m_{\tau}(y)$ for all $x,y\in V(G)$ respectively. Since $\tau$ has height at most $3$, we let $L_1\cup L_2\cup L_3$ partition $V(G)$, where $m_{\tau}(v) = \{v\}$ for all $v\in L_1$, $m_{\tau}(u)\setminus \{u\}\subseteq L_1$ for all $u\in L_2$, and $m_{\tau}(w)\setminus \{w\}\subseteq L_1\cup L_2$ for all $w\in L_3$. 

\begin{claim}
$N[v]$ induces a universe for all $v\in L_1$.
\end{claim}

\begin{proofc}
Let $v\in L_1$. Clearly $v$ is a universal vertex in $N[v]$, thus, (1) of \cref{def:universe} is satisfied. Next, let $P = \{u\in V(G) : v\in m_{\tau}(u)\}$. Since $v\in \bigcap_{u\in P}m_{\tau}(u)$, $P\subseteq N(v)$ and $P$ induces a clique. Thus, (2) of \cref{def:universe} is satisfied. Let $M = N(v)\setminus P$.

We now check condition (3) of \cref{def:universe}. To do this, it suffices to show that for any $w\in N(v)$ with $v\notin m_{\tau}(w)$ (that is, $w\in M$), there exists some $u\in P$ such that $w\in m_{\tau}(u)$. Suppose for contradiction there exists some $w\in M$ such that $w\notin \bigcup_{u\in P}m_{\tau}(u)$. Then $u\notin \ell_{\tau}(w)$ for all $u\in P$. But since $w\in M$ and since $m_{\tau}(v) = \{v\}$, we see that $m_{\tau}(\ell(w))\cap m_{\tau}(v) =  \emptyset$, contradicting that $vw\in E(G)$ and hence $\Tthreeand$-adjacent.

Finally, we consider condition (4) of \cref{def:universe}. Let $x,y\in M$ and first suppose $xy\in E(G)$. By condition (3), there exists $u_x,u_y\in P$ such that $x\in m_{\tau}(u_x)$ and $y\in m_{\tau}(u_y)$. By \cref{obs: topology}(\ref{obs:intersection}), $m_{\tau}(x)\subset m_{\tau}(u_x)$ and $m_{\tau}(y)\subset m_{\tau}(u_y)$ which implies that $u_x\in \ell_{\tau}(x)$ and $u_y\in \ell_{\tau}(y)$. Moreover there must be some $u_x = u_y$ as desired, since otherwise $m_{\tau}(\ell(x))\cap m_{\tau}(y) = \emptyset$ contradicting that $xy\in E(G)$. Likewise, if $xy\notin E(G)$, then all such $u_x$ and $u_y$ must be distinct, since if there is a common $u_x = u_y$, $m_{\tau}(\ell(x))\cap m_{\tau}(y)\neq \emptyset$ and $m_{\tau}(\ell(y))\cap m_{\tau}(x) \neq \emptyset$ contradicting that $xy\notin E(G)$. 

Thus, $(v,P,M)$ induces a universe as desired.
\end{proofc}

Let $L_1 = \{v_1,...,v_t\}$ and by the previous claim, for each $v_i$, let $U_i = (v_i,P_i,M_i)$ be the universe $N[v_i]$ induces. Let $S = \{v_1,...,v_t\}$ and let $P = \bigcup_{i=1}^t P_i$. What remains to be shown is that $\mathcal{U} = U_1,...,U_t$ cover the edges of $G$ and moreover the hypothesis conditions (1) and (2) are satisfied. Since for any $v\in L_1$, $m_{\tau}(v) = \{v\}$, we see that $v\notin P_i$ for any $i\in [t]$. Also, for any $u\in L_2\cup L_3$, there exists some $v_i\in L_1\cap N(u)$ by definition of being in $L_2\cup L_3$. Hence, $u\in P_i$ and thus, $S\cup P$ partitions $V(G)$, satisfying (1). Next, suppose $v_i,v_j\in L_1$ with $w\in N(v_i)\cap N(v_j)\cap P$. Since $w\in N(v_i)\cap N(v_j)\cap P$, we must have $v_i,v_j\in m_{\tau}(w)$. But then $m_{\tau}(\ell(v_i))\cap m_{\tau}(v_j)\neq \emptyset$ and vice versa, implying that $v_iv_j\in E(G)$. Thus, (2) is satisfied.

Finally, consider some edge $uv\in E(G)$. If $|\{u,v\}\cap L_1|>0$, say without loss of generality $v\in L_1$, then trivially we cover the edge $uv$ in $N[v]$, so we may now suppose $u,v\in L_2\cup L_3$. If $u\in L_3$ then since $m_{\tau}(u)=m_{\tau}(\ell(u))$, there exists some $v_i \in m_{\tau}(u)\cap m_{\tau}(v) \cap L_1$. Thus, $uv$ is covered by $N[v_i]$. Finally, suppose $u,v\in L_2$. Since $uv\in E(G)$, either there exists $v_i\in m_{\tau}(u)\cap m_{\tau}(v)\cap L_1$ or there exists $w\in L_3$ such that $u,v\in m_{\tau}(w)$. In the former case, $uv$ is covered by $N[v_i]$ and in the latter case, by definition of $w\in L_3$, there exists some $v_i\in m_{\tau}(w)$ and $uv$ is covered by $N[v_i]$. This shows $\mathcal{U}$ covers $E(G)$ and completes the forwards direction.

(\textbf{Backward direction}) Suppose $G$ is a graph with edge-covering of universes $U_1,...,U_t$ satisfying conditions (1) and (2). We define the following minimal base for a topology $\tau$ on $V(G)$. For notational convenience, for $u\in P$, let $P(u) = \{i : u\in P_i\}$. We now let $L_1 = S$, let $L_2 = \{u\in P: \forall i\in P(u), N_{U_i}(u)\cap M_i\subseteq S\}$ and let $L_3 = V(G)\setminus (L_1\cup L_2)$. We define the minimal sets of $\tau$ as follows.

\[m_{\tau}(v) = \begin{cases}
    \{v\} \hspace{3cm} &\text{$v\in L_1$}\\
    \{v\}\cup \bigcup_{i\in P(u)} s_i &\text{$v\in L_2$}\\
    \{v\}\cup \bigcup_{i\in P(u)} s_i\cup M_i &\text{$v\in L_3$}\\
\end{cases}\]

We now verify that $G\cong G_{3'}(\tau)$. We first show $uv\in E(G) \implies uv\in E(G_{3'}(\tau))$ and second if $uv\notin E(G) \implies uv\notin E(G_{3'}(\tau))$.

\begin{claim}
If $uv\in E(G)$ then $uv\in E(G_{3'}(\tau))$.
\end{claim}

\begin{proofc}
Since $\mathcal{U}$ covers the edges of $G$, we may assume $uv$ is covered by $U_i$ for some $i\in [t]$. We now consider various cases for which sets $u$ and $v$ may be in. First suppose, $u$ or $v$ is $s_i$. Suppose without loss of generality $u = s_i$. If $v\in P_i$, then, $v\in L_2\cup L_3$ and by the definition of $\tau$, $u\in m_{\tau}(v)\implies uv\in E(G_{3'}(\tau))$. If $v\in M_i$, then by the definition of a universe, there must exist some $w\in P_i$ with $w\sim v$. If $v\notin S$, then $w\in L_3$ and by definition of $\tau$, $u,v\in m_{\tau}(w) \implies uv\in E(G_{3'}(\tau))$ by \cref{obs: topology}(\ref{obs:T3-and}). If $v\in S$, then again we get $u,v\in m_{\tau}(w) \implies uv\in E(G_{3'}(\tau))$ by \cref{obs: topology}(\ref{obs:T3-and}). 

Now suppose $u\in P_i$. If $v\in P_i$ then $s_i\in m_{\tau}(u)\cap m_{\tau}(v)$ and thus $uv\in E(G_{3'}(\tau))$. If $v\in M_i\cap S$, then since $u\in L_2\cup L_3$, $v\in m_{\tau}(u)\implies uv\in E(G_{3'}(\tau))$. If $v\in M_i\cap P$, then $u\in L_3$, and we still see that $v\in m_{\tau}(u)\implies uv\in E(G_{3'}(\tau))$. 

Finally, suppose $u,v\in M_i$. By the definition of a universe, there exists $w\in P_i$ with $w\sim u$ and $w\sim v$. If $u,v\in S$, then since $w\in L_2\cup L_3$, $u,v\in m_{\tau}(w)$ and by \cref{obs: topology}(\ref{obs:T3-and}), $uv\in E(G_{3'}(\tau))$. If either $u$ or $v$ is in $P$, then $w\in L_3$, and again we see that $u,v\in m_{\tau}(w)$ and  by \cref{obs: topology}(\ref{obs:T3-and}), $uv\in E(G_{3'}(\tau))$. This covers all possible types of edges in $G$ and thus completes the claim.
\end{proofc}

\begin{claim}
If $uv\notin E(G)$ then $uv\notin E(G_{3'}(\tau))$.
\end{claim}

\begin{proofc}
First suppose $u,v\in S$. We assume for contradiction that $uv\in E(G_{3'}(\tau))$ Since $u,v\in S,m_{\tau}(u) = \{u\}$ and $m_{\tau}(v)=\{v\}$. Thus, if $u$ and $v$ are $\Tthreeand$-adjacent, there must exist some $w\in V(G)$ such that $u,v\in m_{\tau}(w)$. Since $u,v\in m_{\tau}(w)$, $w\in L_2\cup L_3$ which implies that $w\in P$. But this means that $w\in N(u)\cap N(v)\cap P$ contradicting condition (2).

Next, suppose $u\in S$ and $v\in P$. Let $u = s_i$. If $v\in L_3$, then $m_{\tau}(\ell(v)) = m_{\tau}(v)$, so if we suppose for contradiction that $uv\in E(G_{3'}(\tau))$, it must be the case that $u\in m_{\tau}(v)$. But this implies that $uv\in E(G)$, a contradiction. If $v\in L_2$ and $u\notin m_{\tau}(v)$, then again supposing for contradiction that $uv\in E(G_{3'}(\tau))$, there must exists some $w\in L_3$ with $u,v\in m_{\tau}(w)$. But this means that $w\in N(u)\cap N(v)\cap P$ which implies, since $v\in L_2$, there exists some $v'\in L_1 = S$ also with $w\in N(u)\cap N(v')\cap P$, contradicting condition (2).

Finally, suppose $u,v\in P$. Then, $m_{\tau}(u)\cap m_{\tau}(v) = \emptyset$ since otherwise there exists some $s_i$ such that $u,v\in P_i$ implying $uv\in E(G)$, a contradiction. If either $u$ or $v$ is in $L_3$, say $u$,  then $m_{\tau}(\ell(u)) = m_{\tau}(u)$ and as stated previously, $m_{\tau}(\ell(u))\cap m_{\tau}(v) = \emptyset \implies uv\notin E(G_{3'}(\tau))$. Hence, we may assume $u,v\in L_2$. Suppose for contradiction that $uv\in E(G_{3'}(\tau))$. Then there must exist some $w\in L_3$ with $u,v\in m_{\tau}(w)$. This contradicts (2) since $w\in N(u)\cap N(v) \cap P$ which implies, since $u,v\in L_2$, there exists some $u',v'\in L_1 = S$ with $w\in N(u')\cap N(v')\cap P$. Thus, $uv\notin E(G_{3'}(\tau))$. This covers all possible cases and thus completes the claim.
\end{proofc}
By the two previous claims, $G\cong G_{3'}(\tau)$ and thus $G\in \Gthreeand$ as desired.
\end{proof}

\section{Acknowledgments}
This problem originated during the Auburn University Student Research Seminar. We would like to specifically thank Joe Briggs and Chris Wells for organizing the seminar and for their useful conversations during the early stages of this project.

\bibliographystyle{abbrv}
\bibliography{bib}

\end{document}